\newtheorem{thm}{Theorem}
\newtheorem{lem}{Lemma}[section]
\newtheorem{prop}[lem]{Proposition}
\newtheorem{cor}[lem]{Corollary}
\newtheorem{conj}[lem]{Conjecture}
\newcommand{\fqr}{\F_{q^r}}
\newcommand{\fqx}{\F_q[x]}
\newcommand{\fqrp}{\F_{q^{r/p}}}
\newcommand{\fqsp}{\F_{q^{s/p}}}
\newcommand{\ffqs}{\F_{q^s}}
\newcommand{\fqu}{\F_{q^u}}
\newcommand{\fqv}{\F_{q^v}}
\newcommand{\fq}{\F_q}
\newcommand{\fp}{\F_p}
\newcommand{\fd}{\mathcal{F}_d}
\newcommand{\gd}{\mathcal{G}_d}
\newcommand{\fe}{\mathcal{F}_e}
\newcommand{\hd}{\mathcal{H}_d}
\newcommand{\fqs}{{\F_q^\times}}
\renewcommand{\d}{\partial}
\renewcommand{\t}{\mathrm{t}}
\newcommand{\pb}{\bar{\psi}}
\newcommand{\U}{\mathbf{U}}
\newcommand{\tqrp}{\t_{q^r/p}}
\newcommand{\tqrq}{\t_{q^r/q}}
\newcommand{\tqsp}{\t_{q^s/p}}
\newcommand{\tqkp}{\t_{q^m/p}}
\newcommand{\tqp}{\t_{q/p}}
\newcommand{\afd}[1]{{\la #1\ra}_{f\in\fd}}
\newcommand{\hW}{{\hat{V}}}
\newcommand{\hw}{\hW}
\newcommand{\hv}{\hw}
\newcommand{\hwdt}{{\hat{v}_{d,\th}}}
\newcommand{\sr}{\mathcal{S}(\R)}
\newcommand{\srr}{\mathcal{S}(\R^2)}
\newcommand{\M}{\mathcal{M}}
\newcommand{\p}{\mathcal{P}}
\newcommand{\dz}{\frac{\dd}{\dd z}}
\newcommand{\usp}{\mathbf{USp}}
\newcommand{\epr}{e_{p,r}}
\newcommand{\eeps}{e_{p,s}}
\newcommand{\pr}{\mathrm{pr}}
\renewcommand{\O}{\mathcal{O}}
\newcommand{\od}{{\O_d}}
\newcommand{\aod}[1]{{\la {#1}\ra}_{f\in\od}}
\renewcommand{\M}{\mathcal{M}}
\newcommand{\et}[1]{{\eta\lb #1\rb}}
\newcommand{\m}{^{(m)}}
\newcommand{\qrp}{\frac{q^r}{p}}
\begin{document}

\title{On the Distribution of Zeroes of Artin-Schreier L-functions}
\author{Alexei Entin}
\maketitle

\begin{abstract} We study the distribution of the zeroes of the L-functions of curves in the Artin-Schreier family.
We consider the number of zeroes in short intervals and obtain partial results which agree with a random unitary matrix model.
\end{abstract}

\section{Introduction and statement of main results}\label{intro}

Let $p$ be a prime number, $q=p^n$ its power.
Let $d$ be a natural number prime to $p$. We consider the family of curves over $\F_q$ defined by
an equation of the form \beq\label{as}y^p-y=f(x)=a_dx^d+...+a_1x\eeq with $a_i\in\F_q,a_d\neq 0$ and $a_k=0$ for all $k$ divisible by $p$
(every curve defined by an equation of the form $y^p-y=f(x)$ with $f\in\fq[x]$ of degree $d$ is a twist of a curve of the form \rf{as} satisfying
this condition, see section \ref{geom}).
We call such curves Artin-Schreier curves, or A-S curves for short, and the corresponding family the A-S family (with parameter $d$).

Denote by $\Psi$ the set of nontrivial additive characters of $\F_p$. It is known that the L-function of the normalisation of the projective
closure of a curve defined by \rf{as} factors into primitive L-functions as follows: \beq\label{pr}L_f(z)=\prod_{\psi\in\Psi}L_{f,\psi}(z)\eeq
with \beq\label{deflpsi}L_{f,\psi}(z)=\exp\lb\sum_{r=1}^\infty\sum_{\al\in\fqr}\psi\lb\tr_{\F_{q^r}/\F_p}f(\al)\rb\frac{z^r}{r}\rb.\eeq
Each of the $p-1$ factors in \rf{pr} is a polynomial of degree $d-1$ with all zeroes having absolute value $q^{-1/2}$ due to the
Riemann Hypothesis for curves over finite fields (see section \ref{lfunc}).

Denote by $\fd$ the set of polynomials $f$ of the form in \rf{as} satisfying the stated conditions.
We denote $$T_{f,\psi}^r=\sum_{i=1}^{d-1} \rho_i^r,$$ where $\rho_i$ are the normalised zeroes of $L_{f,\psi}$ counting multiplicity.
The zeroes are normalised as follows: $\rho_i=q^{1/2}\lam_i^{-1}$, where $\lam_i$ are the zeroes of $L_{f,\psi}$. We have $|\rho_i|=1$.
Note that the normalised zeroes are proportional to the inverse zeroes of $L_{f,\psi}(z)$ and we preserve this normalisation convention for zeroes
of L-functions throughout the paper.
The quantity $T_{f,\psi}^r$ is the trace of the $r$-th power of the Frobenius element corresponding to the L-function $L_{f,\psi}$. 
For any finite set $A$ and a function $X:A\to\C$ (we denote by $\Z,\Q,\R,\C$ the set of integers, rational, real and complex numbers respectively)
we denote by $\la X(a)\ra_{a\in A}$ the average of $X(a)$ as $a$ runs uniformly through
$A$, in other words $$\la X(a)\ra_{a\in A}=\#A^{-1}\sum_{a\in A}X(a).$$
Denote $$M_d^r=\afd{T_{f,\psi}^r}.$$ It does not depend on the choice of $\psi\in\Psi$, see
section \ref{asfam}. 
Our main result is the following

\begin{thm}\label{thm1}$$M_d^r=-e_{p,r}q^{r/p-r/2}+O\lb rq^{r/2-(1-1/p)d}+q^{-r/2}\rb,$$
where \beq\label{e} e_{p,r}=\choice{0,}{(r,p)=1,}{1,}{p|r}\eeq (the implicit constant is absolute, i.e. does not
depend on $p,q,r,d$).\end{thm}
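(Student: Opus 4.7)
The strategy is to translate $M_d^r$ into an exponential-sum average, extract the arithmetic main term $q^{r/p}e_{p,r}$ from the subfield $\fqrp$ when $p\mid r$, and bound the remainder using Fourier duality and Weil's theorem.

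\emph{First,} matching coefficients in \rf{deflpsi} gives $T_{f,\psi}^r=-q^{-r/2}S_r(f,\psi)$ with $S_r(f,\psi):=\sum_{\al\in\fqr}\psi(\tqrp f(\al))$, so $M_d^r=-q^{-r/2}\la S_r(f,\psi)\ra_{f\in\fd}$. I then split the $\al$-sum by the degree $s=[\fq(\al):\fq]$. For $\al\in\fqrp$ (possible only when $p\mid r$), the tower decomposition yields $\tqrp(y)=p\cdot\t_{\fqrp/\fp}(y)\equiv 0\pmod p$; hence $\psi(\tqrp f(\al))=1$ for \emph{every} $f$, contributing exactly $q^{r/p}e_{p,r}$ to $S_r(f,\psi)$. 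For $\al\notin\fqrp$ one has $p\nmid r/s$, and $\psi(\tqrp f(\al))=\psi^{r/s}(\tqsp f(\al))$ with $\psi^{r/s}(y):=\psi((r/s)y)$ still nontrivial on $\fp$. Averaging over $\fd$ gives
\beq\label{pl:dec} \la S_r(f,\psi)\ra = q^{r/p}e_{p,r}+\sum_{\substack{s\mid r\\ v_p(s)=v_p(r)}}\la S_s^{\mathrm{prim}}(f,\psi^{r/s})\ra_f,\eeq
where $S_s^{\mathrm{prim}}$ restricts the $\al$-sum to elements of exact degree $s$.

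\emph{Next,} I estimate each summand in \rf{pl:dec}. For fixed $\al\in\ffqs$, the inner average $\la\chi(\tqsp f(\al))\ra_f$ factors across the independent coefficients $a_k$ of $f$; orthogonality of nontrivial $\fq$-additive characters forces it to equal $1$ when $\beta_k(\al):=\t_{\ffqs/\fq}(\al^k)=0$ for every $k\in K_d:=\{1\le k\le d:p\nmid k\}$, to equal $-\tfrac{1}{q-1}$ when this holds for $k\in K_d\setminus\{d\}$ but $\beta_d(\al)\ne 0$, and to vanish otherwise. Hence each summand in \rf{pl:dec} equals $(q|A_s^{\mathrm{prim}}|-|B_s^{\mathrm{prim}}|)/(q-1)$, with $A_s:=\{\al\in\ffqs:\beta_k(\al)=0\,\forall k\in K_d\}$ and $B_s$ defined analogously for $K_d\setminus\{d\}$. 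Fourier duality on $\fq^{|K_d|}$ then expresses $|A_s|,|B_s|$ in terms of sums $S_s(g,\psi)$ with $g(x)=\sum_{k\in K_d}t_k x^k$, and since every monomial of $g$ has $p$-coprime exponent, Weil's theorem applies and yields $|S_s(g,\psi)|\le(\deg g-1)q^{s/2}$.

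\emph{The hard part} will be to combine these Weil bounds without losing a factor of $d$. A naive union bound over nonzero Fourier modes produces a remainder $\ll(d-1)q^{s/2}$ per $s$, which upon summation over $s\mid r$ would exceed the claimed error by a factor of $d$. The needed saving must come from (i) the \emph{exact cancellation} of the Fourier zero-modes $q\cdot q^{s-|K_d|}$ of $q|A_s|$ against $q^{s-|K_d|+1}$ of $|B_s|$ in the combination $q|A_s|-|B_s|$, and (ii) the sparsity of $\fd$ (codimension $\floor{d/p}$ within polynomials of degree $\le d$), which supplies an effective $q^{-|K_d|}\le q^{-(1-1/p)d}$ factor. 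Making these cancellations rigorous — likely via M\"obius inversion on the divisor poset $\{s\mid r:v_p(s)=v_p(r)\}$ coupled with careful tracking of how the Weil cross-terms in $|A_s|$ and $|B_s|$ recombine in their difference — is the technical heart of the argument. Once achieved, summing over the $\le r$ relevant $s$ and using $|K_d|\ge(1-1/p)d$ will give $\bigl|\la S_r\ra-q^{r/p}e_{p,r}\bigr|\ll rq^{r-(1-1/p)d}+1$, the $+1$ absorbing the $\al=0$ (i.e., $s=1$) contribution, and multiplying by $-q^{-r/2}$ yields Theorem~\ref{thm1}.
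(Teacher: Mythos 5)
Your first half is on track and matches the paper's structure: Lemma~\ref{lem1} gives the reduction to the complete exponential sum, the split by $\deg_{\fq}\al$ and the isolation of the contribution $q^{r/p}e_{p,r}$ from $\al\in\fqrp$ is exactly what the paper does, and your computation of the conditional average $\afd{\psi(\tqsp f(\al))}$ as $1$, $-\tfrac{1}{q-1}$ or $0$ according to whether $\t_{\ffqs/\fq}(\al^k)$ vanishes for $k\in K_d$, for $k\in K_d\setminus\{d\}$ only, or neither, reproduces Lemmas~\ref{lem0} and~\ref{mainlem}.

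The gap is the plan for bounding $\sum_{s} s\bigl(q|A_s|-|B_s|\bigr)$. You propose Fourier duality on $\fq^{|K_d|}$ plus Weil; this route is not just ``a factor of $d$'' off but is lossy by an \emph{exponential} amount. After the exact zero-mode cancellation you correctly anticipate in $q|A_s|-|B_s|$, the remaining Fourier modes have $t_d\neq 0$ and Weil gives $\bigl|q|A_s|-|B_s|\bigr|\ll q\,(d-1)q^{s/2}$: the $q^{-|K_d|}$ sparsity factor is cancelled by the $q^{|K_d|-1}$ modes being summed and does \emph{not} suppress the Weil remainder. Plugging in $s=r\approx d$ this gives an error $\sim rdq^{r/2}$, whereas the target is $\sim rq^{r-(1-1/p)d}\approx rq^{d/p}$; for $p>2$ the Weil-route bound is larger by a factor $q^{d(1/2-1/p)}\cdot d$, which is fatal. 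Your diagnosis ``the needed saving must come from the sparsity $q^{-|K_d|}$'' misplaces where that factor acts: it produces the (correct, and small) zero-mode main term, not a suppression of the error.

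What the paper does instead, and what you should do, is purely elementary and avoids Weil entirely. Newton's identities (Lemma~\ref{lem2}, carefully stated to handle characteristic $p$) show that for $\al$ of degree $s$ with minimal polynomial $h=\sum c_ix^i$, the condition $\t_{q^s/q}(\al^k)=0$ for all $k<d$ with $(k,p)=1$ is equivalent to $c_{s-k}=0$ for the same $k$. One then \emph{trivially} upper-bounds $\eta_d(s)$ by the number of monic degree-$s$ polynomials with those $\approx(1-1/p)d$ coefficients forced to vanish, namely $O\bigl(q^{s-d+\floor{d/p}}\bigr)$, and the sum in Proposition~\ref{irr} immediately yields the theorem. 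This sidesteps Fourier analysis and character sums over $\ffqs$ altogether, replacing them with a dimension count; it also removes the awkwardness of summing over exact-degree elements (your $S^{\mathrm{prim}}_s$), since irreducible polynomials of degree $s$ are what one counts directly. If you want to salvage your linear-algebraic $A_s$ formulation without Newton, you would need to show that the $\fq$-linear forms $\al\mapsto\t_{\ffqs/\fq}(\al^k)$, $k\in K_d\cap[1,s]$, are linearly independent on $\ffqs$ — which is exactly the content of the Newton-identity lemma.
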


Note that the error term in Theorem \ref{thm1} is small when $r\le (2-2/p-\eps)d$ for any fixed $\eps>0$.
For $p>2$ Theorem \ref{thm1} suggests that for $d\to\infty$ the zeroes of all the L-functions in this family when taken together 
are distributed quite uniformly on the unit circle.
We conjecture that the average number of zeroes of $L_{f,\psi}$ contained in an arc of length $O(1/d)$ on the unit circle as $f$ varies uniformly through $\fd$ tends to the length of the arc divided by $2\pi$ as $d\to\infty$. For arcs of length $l$ s.t. $ld\to\ity$ this has recently been proved in \cite{uniform}.
The conjecture is related to the random unitary matrix model for A-S L-functions which we present in section \ref{rmm}.
We are only able to obtain a weaker result with the arc replaced by a window function with bounded frequency.
Denote by $\sr$ the space of smooth complex-valued functions on the real line with all derivatives decaying faster than any power of $t$ at
infinity (the Schwartz space).

\begin{thm}\label{thm2} Assume $p>2$. Let $V\in\sr$ be a function s.t. its Fourier transform 
$$\hat{V}(s)=\frac{1}{2\pi}\inti V(t)e^{-ist}dt$$ is supported
on the interval $(-2+2/p,2-2/p)$. 
Denote $$v_d(t)=\sum_{n=-\infty}^\infty V(d(t+2\pi n)).$$ This function has period $2\pi$.
Let $\th$ be any real number.
Denote $$S_f=\sum_{j=1}^{d-1}v_d(\th_j-\th),$$ where $\rho_j=e^{i\th_j}$ are the normalised zeroes of $L_{f,\psi}$,
$\th_j$ being real numbers well defined modulo $2\pi$.
Then $$\afd{S_f}=\frac{1}{2\pi}\int_{-\infty}^\infty V(t)dt+o(1)$$ as $d\to\infty$
and $q$ may vary as we please, i.e. there is a bound on the decay rate of the $o(1)$ term which depends on $d$ and $V$ but not on $q$
(in fact the error becomes smaller as $q$ grows).
\end{thm}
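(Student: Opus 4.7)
The plan is to expand the periodic function $v_d$ in its Fourier series on the circle, thereby expressing $\afd{S_f}$ as a linear combination of the averaged trace moments $M_d^k=\afd{T^k_{f,\psi}}$, and then invoke Theorem \ref{thm1} to estimate each summand. The compact-support hypothesis on $\hat V$ truncates the sum to $|k|\le\al d$ for some $\al<2-2/p$, which is what makes the bounds close up.

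A direct computation of Fourier coefficients yields $v_d(t)=\frac{1}{d}\sum_{k\in\Z}\hat V(k/d)e^{ikt}$; indeed, unfolding the periodisation gives
$$\frac{1}{2\pi}\int_0^{2\pi}v_d(t)e^{-ikt}dt=\frac{1}{2\pi}\int_{-\infty}^\infty V(dt)e^{-ikt}dt=\frac{1}{d}\hat V(k/d).$$
Substituting $t=\th_j-\th$ and summing over the normalised zeroes gives
$$S_f=\frac{d-1}{d}\hat V(0)+\frac{1}{d}\sum_{k\ne 0}\hat V(k/d)e^{-ik\th}\sum_{j=1}^{d-1}\rho_j^k,$$
where for $k>0$ the inner sum equals $T^k_{f,\psi}$ and for $k<0$ it equals $\overline{T^{|k|}_{f,\psi}}$ since $|\rho_j|=1$. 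The constant term contributes $\hat V(0)+O(1/d)=\ftp\int_{-\infty}^\infty V(t)dt+o(1)$, the claimed main term.

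It remains to show that the averaged sum over $k\ne 0$ is $o(1)$ uniformly in $q$. Since $\hat V$ is continuous with compact support inside the open interval $(-2+2/p,2-2/p)$, we may fix $\al<2-2/p$ with $\mathrm{supp}(\hat V)\subset[-\al,\al]$, so only $0<|k|\le\al d$ contribute. Applying Theorem \ref{thm1} (and conjugating for $k<0$) and bounding $|\hat V|$ by its supremum, we must control
$$\frac{1}{d}\sum_{0<|k|\le\al d}\lb e_{p,|k|}q^{|k|(1/p-1/2)}+|k|q^{|k|/2-(1-1/p)d}+q^{-|k|/2}\rb.$$
For $p\ge 3$ and $q\ge p$, the first and third summands are dominated by a geometric series with ratio at most $3^{-1/6}$, so their total contribution is $O(1/d)$. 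For the middle summand the exponent is bounded above by $(\al/2-(1-1/p))d=-\be d$ with $\be:=(1-1/p)-\al/2>0$, so its total contribution is at most $O(dq^{-\be d})=o(1)$ as $d\to\infty$.

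The main obstacle is precisely the middle summand: at the boundary $|k|\approx(2-2/p)d$ the factor $q^{|k|/2-(1-1/p)d}$ is only $O(1)$, which combined with the $|k|$ prefactor and the $O(d)$ summands would give the useless bound $O(d)$; only the strict inclusion of $\mathrm{supp}(\hat V)$ inside the open interval $(-2+2/p,2-2/p)$ supplies the exponential saving $q^{-\be d}$ that beats these factors. The hypothesis $p>2$ is likewise essential, since otherwise the leading bias term of Theorem \ref{thm1} would be of size $e_{p,k}$ uniformly in $k$ and would fail to be summable.
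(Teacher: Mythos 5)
Your proposal is correct and follows essentially the same route as the paper: expand the periodised window $v_d$ in its Fourier series (whose coefficients are $\frac{1}{d}\hat V(k/d)$), reduce $\afd{S_f}$ to the moments $M_d^k$, truncate via the compact support of $\hat V$, and feed in Theorem~\ref{thm1}. The only cosmetic difference is that you track the $q^{-|k|/2}$ error term and the $\frac{d-1}{d}\hat V(0)$ main-term correction explicitly, while the paper absorbs both silently; the final bookkeeping and conclusion are the same.
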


We conjecture that this in fact holds for any $V\in\sr$, but with the presently
existing methods it seems difficult to prove.

We also consider some nonlinear statistics of the zeroes of L-functions in the A-S family. Let $V\in\srr$ be a two variable window function and $v_{d-1}(t,u)$ the periodic window function associated with $V$
by $$v_{d-1}(t,u)=\sum_{m,n=-\ity}^\ity V((d-1)(t+2\pi m),(d-1)(u+2\pi n)).$$ Let $\rho_1,...,\rho_{d-1}$ be the normalised zeroes of $L_{f,\psi}$
for $f\in\fd$, $\rho_j=e^{i\th_j}$ and let $\th$ be some fixed real number.
We consider the
2-level density function (at $\th$):
$$S^2_\th(f,\psi)=\sum_{1\le j,k\le N\atop{j\neq k}}v_{d-1}(\th_j-\th,\th_k-\th).$$
For a function $V\in\srr$ we define its Fourier transform by $$\hv(\eta,\xi)=\frac{1}{4\pi^2}\inti\inti V(t,u)e^{-i\eta t-i\xi u}\dd t\dd u.$$
In section \ref{nonlinear} we will prove the following

\begin{thm}\label{thm2p}Assume $p>2$. Let $V\in\srr$ be a window function s.t. its Fourier transform $\hv(\eta,\xi)$ is supported on the set $|\eta|+|\xi|\le 1$.
Then for all $\th$, $$\afd{S^2_\th(f,\psi)}= \frac{1}{4\pi^2}\inti\inti V(t,u) \lb \frac{\sin((t-u)/2)}{(t-u)/2}\rb^2\dd t\dd u +o(1)$$ as $d\to\infty$.\end{thm}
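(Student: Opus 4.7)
The plan is to Fourier-expand the periodised window $v_{d-1}$ and reduce the computation to averages of products $T^a_{f,\psi}T^b_{f,\psi}$ of trace powers over $f\in\fd$. Since $\hW$ is supported on $|\eta|+|\xi|\le 1$, only finitely many modes contribute:
$$v_{d-1}(t,u)=\frac{1}{(d-1)^2}\sum_{|a|+|b|\le d-1}\hW\!\Bigl(\tfrac{a}{d-1},\tfrac{b}{d-1}\Bigr)e^{i(at+bu)}.$$
Using the power-sum identity $\sum_{j\ne m}\rho_j^a\rho_m^b=T^a_{f,\psi}T^b_{f,\psi}-T^{a+b}_{f,\psi}$ (extending $T^r_{f,\psi}$ to $r\in\Z$ via $T^{-r}_{f,\psi}=\overline{T^r_{f,\psi}}$, valid since $|\rho_j|=1$), averaging over $f\in\fd$ yields
$$\afd{S^2_\th(f,\psi)}=\frac{1}{(d-1)^2}\sum_{|a|+|b|\le d-1}\hW\!\Bigl(\tfrac{a}{d-1},\tfrac{b}{d-1}\Bigr)e^{-i(a+b)\th}\bigl(\afd{T^aT^b}-\afd{T^{a+b}}\bigr).$$

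The linear average $\afd{T^{a+b}}$ is handled by Theorem \ref{thm1} (extended to negative indices by conjugation): it equals $d-1$ when $a+b=0$ and is $o(1)$ otherwise, since $|a+b|\le d-1<(2-2/p)d$ sits well within the range where the error in Theorem \ref{thm1} is negligible. The heart of the proof is therefore the bilinear average $\afd{T^aT^b}$. Taking $a,b>0$ for concreteness (other signs by conjugation), one writes $T^aT^b=q^{-(a+b)/2}\sum_{\al\in\F_{q^a},\beta\in\F_{q^b}}\psi\bigl(\tr_{\F_{q^a}/\F_p}f(\al)+\tr_{\F_{q^b}/\F_p}f(\beta)\bigr)$ and averages over the coefficients $a_k$ by orthogonality of additive characters. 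This enforces the linear trace constraints $\tr_{\F_{q^a}/\F_q}(\al^k)+\tr_{\F_{q^b}/\F_q}(\beta^k)=0$ for every $1\le k\le d-1$ coprime to $p$, together with a boundary term from the leading coefficient $k=d$. The CUE-type main term $\afd{T^aT^{-a}}\sim|a|$ for $|a|\le d-1$ emerges from the diagonal pairs $(\al,-\al)$ with $a=b$, a configuration genuinely present only when $p>2$ (so that $-1\ne 1$ in $\fp$).

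Assembling the contributions, the $(0,0)$ mode gives $\hW(0,0)\cdot\tfrac{(d-1)(d-2)}{(d-1)^2}\to\hW(0,0)$, the $a+b=0$, $a\ne 0$ modes form a Riemann sum converging to $\int_{-1}^1\hW(x,-x)(|x|-1)\,dx$, and the remaining modes produce $o(1)$. Applying Parseval with the explicit Fourier pair $(\sin(s/2)/(s/2))^2\leftrightarrow\max(0,1-|\eta|)$ reassembles these pieces into the claimed integral against the squared sine kernel. The principal obstacle is the uniform evaluation of $\afd{T^aT^b}$ throughout the range $|a|+|b|\le d-1$: one must isolate the small diagonal contribution giving the CUE main term and show, via Weil/Deligne-type cohomological bounds on the bilinear exponential sums (paralleling and bilinearising the tools behind Theorem \ref{thm1}), that all non-diagonal pairs contribute $o((d-1)^2)$ after normalisation by $q^{(a+b)/2}$.
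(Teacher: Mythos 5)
Your overall strategy matches the paper's: expand the periodised window $v_{d-1}$ in a Fourier series, use the power-sum identity $\sum_{j\neq k}\rho_j^a\rho_k^b=T^a_{f,\psi}T^b_{f,\psi}-T^{a+b}_{f,\psi}$, reduce the theorem to asymptotics for $\afd{T^{a,b}_{f,\psi}}$ and $\afd{T^{a+b}_{f,\psi}}$, then reassemble the limit via a Riemann sum and the Fourier pair $(\sin(s/2)/(s/2))^2\leftrightarrow\max(0,1-|\eta|)$ (the paper's Lemma \ref{flem}). Your final assembly of modes $(0,0)$, $a+b=0$ with $a\neq 0$, and the rest is exactly what the paper does in section \ref{pthm2p}.

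The genuine gap is in how the bilinear average is evaluated, which is the crux of the proof. You claim the CUE-type main term in $\afd{T^aT^{-a}}$ "emerges from the diagonal pairs $(\al,-\al)$", and that this "configuration is genuinely present only when $p>2$." This is wrong. After expanding $T^aT^{-a}=q^{-|a|}\sum_{\al,\be\in\F_{q^{|a|}}}\psi(\tqrp f(\al)-\tqrp f(\be))$ and averaging over the free coefficients of $f\in\fd$, the condition for nonvanishing is $\t_{q^{|a|}/q}(\al^k)=\t_{q^{|a|}/q}(\be^k)$ for every $k$ prime to $p$ in the range of free coefficients. The pair $\be=-\al$ does \emph{not} satisfy this: for odd $k$ it forces $2\t_{q^{|a|}/q}(\al^k)=0$, which fails generically when $p>2$. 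The correct source of the main term is the set of Frobenius-conjugate pairs $(\al,\al^{q^j})$ — pairs with the same minimal polynomial over $\fq$ — since the trace is Frobenius-invariant. Counting these gives $\sum_{m\mid |a|}\pi(m)m^2\sim |a|q^{|a|}$, whence $\afd{T^aT^{-a}}\sim|a|$ after normalisation; this is precisely the content of the paper's Lemma \ref{rsd} and Proposition \ref{mdrs1}, and those are proved by elementary equidistribution (surjectivity of $f\mapsto(f(\al),f(\be))$ via CRT and orthogonality of additive characters), \emph{not} by any Weil/Deligne-type cohomological bound, which your proposal invokes but which is neither used nor needed anywhere in this paper. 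Finally, the reason $p>2$ is assumed is not that "$-1\neq 1$ in $\fp$"; it is that for $p=2$ the unique nontrivial character is self-conjugate, hence $T^{-r}_{f,\psi}=T^r_{f,\psi}$ and $M^{r,s}_d=M^{r,-s}_d$, which yields symplectic rather than unitary 2-level statistics (compare the parity conditions $mp\mid r\pm s$ in Proposition \ref{mdrs1}).
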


The connection with the random unitary matrix model is discussed in section \ref{nonlinear}.

We also consider the subfamily of A-S curves defined by \rf{as} with odd polynomial $f(x)$. For an odd natural number $d$ denote
by $\od$ the subset of $f\in\fd$ s.t. $f(x)=-f(-x)$. In section \ref{odd} we prove the following
\begin{thm}\label{oddthm} Assume $p>2$ and $d$ is odd. There exists a positive constant $C$ (in fact any $C<1$ will do) s.t. 
$$\aod{T_{f,\psi}^r}=-e_{2,r}+O\lb rq^{-r/6}\rb,$$ provided that $r<Cp\log_q d$ and $r<d/4$
($e_{2,r}$ is defined by \rf{e}).\end{thm}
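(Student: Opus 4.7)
The plan is to parallel the computation underlying Theorem~\ref{thm1} but restricted to the odd subfamily $\od$. I would begin by expressing
\[T_{f,\psi}^r = -q^{-r/2}\sum_{\al\in\fqr}\psi\lb\tqrp f(\al)\rb\]
via the standard Weil-sum identity. Averaging this over $\od$, the independence of the coefficients $a_k\in\fq$ (for odd $k<d$ with $(k,p)=1$) and $a_d\in\fqs$ combined with orthogonality of additive characters of $\fq$ reduces $\aod{T_{f,\psi}^r}$ to $-q^{-r/2}$ times a weighted count of $\al\in\fqr$ satisfying $\tr_{\fqr/\fq}(\al^k)=0$ for all such $k$, weighted by $+1$ or $-1/(q-1)$ according as $\tr_{\fqr/\fq}(\al^d)$ vanishes or not. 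The Frobenius identity $\tr(\al^{pk})=\tr(\al^k)^p$ together with the hypothesis $p>2$ lets me extend these vanishing conditions to all odd $k<d$.

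The main algebraic content is to show that generically these conditions force the Galois orbit $\{\al^{q^i}\}_{i=0}^{r-1}$ to be closed under $x\mapsto -x$. Formally, Newton's identities applied to $P_\al(z)=\prod_i(1-\al^{q^i}z)$ in characteristic zero yield $P_\al(z)\equiv P_\al(-z)\pmod{z^d}$ from the vanishing of all odd power sums $p_k=\tr(\al^k)$ for $k<d$; combined with the degree bound $r<d/4<d$, this would upgrade to an identity $P_\al(z)=P_\al(-z)$ in $\fq[z]$, so that the orbit is negation-symmetric. For such $\al$ the identity $\tr_{\fqr/\fq}(\al^d)=0$ holds automatically (as $d$ is odd), so the $a_d$-weight is $+1$, and direct counting then produces the main contribution: for $r$ odd, only $\al=0$ is negation-symmetric, giving $O(q^{-r/2})$; for $r$ even, the negation-symmetric $\al$'s form the $(-1)$-eigenspace of $\phi^{r/2}$ (the $(r/2)$-th power of Frobenius) acting on $\fqr$, which has size exactly $q^{r/2}$, yielding the main term $-e_{2,r}=-1$, with proper-subfield corrections of size $O(q^{r/4})$.

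The main obstacle is a characteristic-$p$ gap in the Newton-identity argument: the coefficients $A_k$ of $P_\al$ with $p\mid k$ are not determined by the power sums, so the lift-to-characteristic-zero argument is not rigorous in $\fq$, and one must allow ``phantom'' $\al\in\fqr$ which satisfy the trace conditions without having a negation-symmetric orbit. These phantoms are precisely the characteristic-$p$ analogues of the $\F_{q^{r/p}}$ subfield term that produces the main term of Theorem~\ref{thm1}: for $\al\in\F_{q^{r/p}}$ one has $\tr_{\fqr/\fq}(\al^m)=p\cdot\tr_{\F_{q^{r/p}}/\fq}(\al^m)=0$ in $\fq$ automatically, satisfying all trace hypotheses (including $\tr_{\fqr/\fq}(\al^d)=0$, so the $a_d$-weight is again $+1$). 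Iterating Newton's identities in characteristic $p$, using the bound $r<d/4$ to guarantee sufficiently many trace conditions, excludes all other potential phantoms, and a careful count shows their contribution is $O(rq^{r/p})$; the hypothesis $r<Cp\log_q d$ keeps $q^{r/p}\le d^C$ polynomial in $d$. Dividing by $q^{r/2}$ and using $p\ge 3$ (so $r/p\le r/3$) yields the claimed error $O(rq^{-r/6})$.
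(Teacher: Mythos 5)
Your high-level reduction is correct and agrees with the paper's: averaging $-q^{-r/2}\sum_{\al\in\fqr}\psi(\tqrp f(\al))$ over $\od$ does reduce to a weighted count of $\al\in\fqr$ with $\tr_{\fqr/\fq}(\al^k)=0$ for all odd $k<d$ prime to $p$, the negation-symmetric $\al$ (i.e.\ the $(-1)$-eigenspace of $\phi^{r/2}$, of size $q^{r/2}$ when $r$ is even) correctly produce the main term $-e_{2,r}$, and the $\F_{q^{r/p}}$ subfield elements do contribute $q^{r/p}\le q^{r/3}$, absorbed into $O(q^{-r/6})$ after dividing by $q^{r/2}$. This is a cleaner direct route to the main term than the paper's passage through Dirichlet characters and the count $\eta(H,r)$.

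However, there is a genuine gap at the crucial step. You write that ``iterating Newton's identities in characteristic $p$, using the bound $r<d/4$\ldots excludes all other potential phantoms,'' but this is precisely where the argument breaks and you offer no mechanism to close it. In characteristic $p$ the power sums $p_k$ with $p\mid k$ are free parameters unconstrained by the vanishing of the odd prime-to-$p$ power sums, so the logarithmic-derivative argument $\frac{(h^*)'}{h^*}=-\sum_k p_kz^{k-1}$ only shows this series has unwanted even-degree terms at $z^{k-1}$ with $k$ odd and $p\mid k$ --- it does \emph{not} force $h^*(z)=h^*(-z)$. The paper's resolution (Lemma \ref{hort} plus the proof of Conjecture \ref{niceconj}) is a pigeonhole argument: from a congruence $g_1(x^p)h\equiv g_2(x^2)\pmod{x^d}$ one uses $r<Cp\log_q d$ with $C<1$ to guarantee $\floor{d/2p}>q^{r/p+1}+r/p+1$, so among the same-parity coefficients of $g_1$ some window of length $r/p+1$ repeats; replacing $g_1$ by the resulting eventually periodic (hence rational) power series upgrades the congruence to an exact factorization $h=h_3(x)^ph_4(x^2)$, hence $h=h_4(x^2)$ for irreducible $h$. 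You have misidentified the role of $r<Cp\log_q d$: your write-up uses it only to bound $q^{r/p}$, but that bound is subsumed by $p\ge 3$; the hypothesis is in fact needed to make the pigeonhole fire, which is exactly the step your plan hand-waves. Without a substitute for that combinatorial argument, the ``no other phantoms'' claim is unjustified, and the proof is incomplete.
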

This result agrees with a random symplectic matrix model for the L-zeroes in the family $\od$ (see section \ref{odd}). 

We consider a more general type of families of L-functions corresponding to Dirichlet characters and show that the A-S family (as well as the A-S family with odd
polynomials) is a special case. We also indicate how our results for the A-S family can be generalised to such families.
This occupies section \ref{secdirg}.

We also consider a related problem - the distribution of the number of points of a curve varying uniformly in a family of A-S curves. 
The proofs of our results are
presented in section \ref{numpoints}. This part is independent of the rest of our work and the interested reader may skip to section \ref{numpoints}
after section \ref{back}. We consider the distribution of the number of points on the curve $C_f$ as $f$ varies uniformly through the A-S family and
$d\to\infty$. Here we denote by $C_f$ the normalisation of the projective closure of the curve defined by \rf{as} for $f\in\F_q[x]$. We define $\gd$ to be the set of all monic degree $d$ polynomials in $\fq[x]$.
For the problem of the distribution of the number of points it is more convenient to consider the family 
of A-S curves defined by the polynomials in $\gd$.
It is not difficult to adjust the statements and proofs for the case of the family $\fd$. 

For the rest of this section let $r$ be a fixed natural number.
We will see in section \ref{geom} that the number of $\F_{q^r}$-rational 
points on an A-S curve $C_f$ always equals 1 modulo $p$ and so we denote $N(f)=(\#C_f\lb\fqr\rb-1)/p$. Our results concern the distribution of $N(f)$
as $f$ varies uniformly in the family $\gd$.

We denote by $\pi(e)$ the number of monic degree $e$ irreducible
polynomials in $\F_q[x]$. We denote by $B(t)$ the Bernoulli random variable which assumes 1 with probability $t$ and 0 with probability $1-t$.
For two random variables $X,Y$ we write $X\sim Y$ if they have the same distribution.

\begin{thm}\label{t1} Let $p,n$ be fixed. For each $e|r$ let $X_{e,1},...,X_{e,\pi(e)}\sim B(1/p)$ be random variables with all $\left\{X_{e,i}\right\}_{e|r,1\le i\le\pi(e)}$ independent.
Then for $d\ge q^r$ the following holds:\bi 
\item[(i)] If $(r,p)=1$, the distribution of $N(f)$ equals that of $$\sum_{e|r}e\sum_{i=1}^{\pi(e)}X_{e,i}.$$
In particular the mean value of $N(f)$ is $q^r/p$.
\item[(ii)] If $p|r$ then the distribution of $N(f)-q^{r/p}$ equals that of $$\sum_{e|r\atop (r/e,p)=1}e\sum_{i=1}^{\pi(e)}X_{e,i}.$$
In particular the mean value of $N(f)$ is $$\frac{q^r}{p}+\lb 1-\frac{1}{p}\rb q^{r/p}.$$
\ei\end{thm}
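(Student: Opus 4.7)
The plan is to express $N(f)$ via a Frobenius-orbit decomposition of $\fqr$ and then apply Lagrange interpolation to identify the indicators associated with individual orbits as independent Bernoulli random variables when $f$ is uniform in $\gd$. The starting point is the standard Artin--Schreier point-count formula (to be developed in section \ref{geom}): the affine fibre $y^p-y=f(\alpha)$ above $\alpha\in\fqr$ contains $p$ points in $\fqr$ if $\tqrp(f(\alpha))=0$ and is otherwise empty, so adjoining the single point at infinity gives $\#C_f(\fqr)=1+p\cdot\#\{\alpha\in\fqr:\tqrp(f(\alpha))=0\}$, whence $N(f)=\#\{\alpha\in\fqr:\tqrp(f(\alpha))=0\}$.

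I then partition $\fqr$ into Frobenius orbits over $\F_q$: each $\alpha$ lies in a unique orbit of some size $e\mid r$, and the orbits of size $e$ are in bijection with the $\pi(e)$ monic irreducibles of degree $e$ in $\fqx$. Since $f\in\fqx$ and $\tqrp$ is Frobenius-invariant, $\tqrp(f(\alpha))$ depends only on the orbit of $\alpha$; writing $Y_{e,i}(f)$ for the indicator of $\tqrp(f(\alpha))=0$ on the $i$-th orbit of size $e$, we get $N(f)=\sum_{e\mid r}e\sum_{i=1}^{\pi(e)}Y_{e,i}(f)$. The tower formula $\tqrp(\beta)=(r/e)\cdot\t_{q^e/p}(\beta)$ for $\beta\in\F_{q^e}$ shows that when $p\mid r/e$ the indicator is identically $1$; those orbits together exhaust $\fqrp$ and contribute exactly $q^{r/p}$ to $N(f)$, accounting for the deterministic shift in (ii).

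For the remaining orbits I will show that $(\tqrp(f(\alpha_i^{(e)})))$ is uniformly distributed on $\F_p^{K'}$ as $f$ varies in $\gd$, with $\alpha_i^{(e)}$ chosen representatives and $K'$ the number of relevant orbits. Writing $f=x^d+g$ with $g$ ranging freely over polynomials of degree $<d$ in $\fqx$, uniformity reduces to surjectivity of the $\F_p$-linear map $g\mapsto(\tqrp(g(\alpha_i^{(e)})))$. Since $d\ge q^r$, Lagrange interpolation provides a bijection between polynomials of degree $<q^r$ in $\fqx$ and the $\F_q$-subspace of Galois-equivariant tuples in $\prod_{\alpha\in\fqr}\fqr$, so the values $g(\alpha_i^{(e)})\in\F_{q^e}$ at orbit representatives can be prescribed independently. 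The condition $(r/e,p)=1$ then makes $v\mapsto(r/e)\cdot\t_{q^e/p}(v)$ a surjection $\F_{q^e}\to\F_p$, yielding surjectivity onto $\F_p^{K'}$ and hence independence of the $Y_{e,i}$ as Bernoulli$(1/p)$ variables. The claimed means of $N(f)$ follow from $\sum_{e\mid r}e\pi(e)=q^r$ and $\sum_{e\mid r/p}e\pi(e)=q^{r/p}$.

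The main obstacle is the surjectivity step, where one must combine the Lagrange-interpolation identification with the trace surjectivity $\F_{q^e}\to\F_p$ and carefully separate the exceptional orbits (those with $p\mid r/e$) that produce the deterministic $q^{r/p}$ contribution in case (ii) from the orbits whose indicators behave as independent Bernoullis. Everything else, including the mean computation, is combinatorial bookkeeping once this structural statement is in place.
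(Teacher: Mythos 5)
Your proposal is correct and takes essentially the same route as the paper: your Frobenius-orbit decomposition is exactly the paper's decomposition over monic irreducibles $h$ of degree $e\mid r$, and your Lagrange-interpolation/Galois-equivariant-tuples argument for independence and uniformity is the same content as the paper's Lemma~\ref{l1}, which uses CRT and residues modulo $\prod h_i$ (evaluation at a root of $h$ is the isomorphism $\fq[x]/h\cong\F_{q^e}$). The separation of the deterministic $q^{r/p}$ contribution from the orbits with $p\mid r/e$ and the Bernoulli$(1/p)$ behavior on the rest, as well as the reduction to surjectivity of a linear map, match the paper's argument step for step.
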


We also consider what happens when $p,n$ are allowed to vary.

\begin{thm}\label{t2}
Let $p,d$ both tend to infinity and $n,r=1$. Then $N(f)$ converges in distribution to the Poissonian distribution with mean 1, i.e. $\P(m)=e^{-m}/m!$\end{thm}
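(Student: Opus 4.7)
The plan is to reduce the theorem immediately to a Binomial-to-Poisson convergence via Theorem \ref{t1}(i), and then invoke the classical law of rare events. First, I would observe that since $n=r=1$, we have $q=p$ and $(r,p)=1$, so Theorem \ref{t1}(i) is the applicable statement. The hypothesis $d\ge q^r=p$ of Theorem \ref{t1} is implicitly required along the sequence $(p,d)\to\infty$ under consideration.

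Next, I would specialize the formula in Theorem \ref{t1}(i) to $r=1$. The only divisor of $1$ is $e=1$, so the sum collapses and
$$N(f) \sim \sum_{i=1}^{\pi(1)} X_{1,i}, \qquad X_{1,i}\sim B(1/p)\text{ i.i.d.}$$
Since $\pi(1)$ equals the number of monic linear polynomials in $\F_q[x]=\F_p[x]$, we have $\pi(1)=p$. Therefore $N(f)$ is distributed as a sum of $p$ i.i.d.\ Bernoulli$(1/p)$ random variables, i.e.\ $N(f)\sim \mathrm{Bin}(p,1/p)$.

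The remaining step is the classical Poisson limit theorem: as $p\to\infty$, the Binomial$(p,1/p)$ distribution converges in distribution to Poisson with mean $1$. This can be checked directly from the pointwise limit
$$\binom{p}{m}\lb\frac{1}{p}\rb^m\lb 1-\frac{1}{p}\rb^{p-m} \longrightarrow \frac{e^{-1}}{m!}$$
for each fixed $m$, using $\binom{p}{m}/p^m\to 1/m!$ and $(1-1/p)^{p-m}\to e^{-1}$; alternatively, via the moment generating function $(1-1/p+e^t/p)^p\to e^{e^t-1}$. Since the limit is supported on $\Z_{\ge 0}$, pointwise convergence of the PMFs is equivalent to convergence in distribution.

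There is no real obstacle: once Theorem \ref{t1}(i) is granted, the entire content of Theorem \ref{t2} is the standard Poisson approximation to the Binomial distribution. The only thing to be careful about is the compatibility of the sequences: we need $d\ge p$ to apply Theorem \ref{t1}(i), which is consistent with the ``$p,d\to\infty$'' convention of the statement.
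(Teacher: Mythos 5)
Your computation of the Poisson limit of $\mathrm{Bin}(p,1/p)$ is correct, and your identification of Theorem~\ref{t1}(i) with $r=1$ as giving $N(f)\sim\mathrm{Bin}(p,1/p)$ is also correct --- when $d\ge p$. The gap is that Theorem~\ref{t2} is stated for $p,d\to\infty$ \emph{jointly, with no assumed ordering}, and the clause $d\ge q^r=p$ is \emph{not} ``implicitly required'' by that convention: for instance $d_m=m$ and $p_m$ the $m$-th prime gives $p_m,d_m\to\infty$ with $d_m<p_m$ throughout, so Theorem~\ref{t1} never applies. In that regime $N(f)$ is \emph{not} a sum of $p$ fully independent Bernoullis, because $\xi_h(f)$ for $\deg h=1$ is a function of $f(a)$, $a\in\F_p$, and a polynomial $f$ of degree $d<p$ does not realize all tuples $(f(a))_{a\in\F_p}$ equally often --- Lemma~\ref{l1} only gives joint independence of $\xi_{h_1},\dots,\xi_{h_k}$ when $d\ge\sum\deg h_i=k$.

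The paper's proof is built exactly to bridge this gap. It does not try to identify the exact distribution of $N(f)$; instead, via Lemma~\ref{l2} (a moment-comparison lemma), it observes that for each fixed $k$, any $k$ of the variables $\xi_h(f)$ become independent once $d\ge k$, hence the $k$-th moment of $N(f)$ eventually agrees with the $k$-th moment of the idealized i.i.d.\ sum $\sum Y_i$. Since $\mathrm{Bin}(p,1/p)\Rightarrow\mathrm{Poisson}(1)$ and the Poisson distribution is determined by its moments, $N(f)$ converges to $\mathrm{Poisson}(1)$ with \emph{no} constraint on the relative growth of $d$ and $p$. So what your argument buys is brevity in the case $d\ge p$; what the paper's moment-method argument buys is the full strength of the stated theorem. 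To repair your proof you would need to either add the hypothesis $d\ge p$ to the theorem, or replace the invocation of Theorem~\ref{t1} with the $k$-wise-independence-plus-moments argument of Lemma~\ref{l2}.
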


\begin{thm}\label{t3} Let $p$ be fixed and $n,d\to\infty$. 
\bi\item[(i)] If $(r,p)=1$ or $p>2$ then $$p^{1/2}\lb(1-1/p)r\rb^{-1/2}q^{-r/2}\lb N(f)-\qrp\rb$$ converges in distribution to the 
Gaussian distribution with mean 0 and variance 1.
\item[(ii)] If $p=2$ and $r$ is even then $$2r^{-1/2}q^{-r/2}\lb N(f)-\qrp-\frac{2q^{r/2}}{r}\rb$$
converges in distribution to the Gaussian distribution with mean 0 and variance 1.\ei\end{thm}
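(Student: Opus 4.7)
The plan is to invoke Theorem \ref{t1} to realise $N(f)$ as an explicit sum of independent bounded random variables, and then apply the Lyapunov central limit theorem. Since $p$ is fixed and $n,d\to\infty$, we may assume $d\geq q^r$ throughout, so that Theorem \ref{t1} applies verbatim.

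First I would uniformly rewrite Theorem \ref{t1} as $N(f)-e_{p,r}q^{r/p}\sim S$, where
\begin{equation*}
S \ := \ \sum_{e\mid r,\ (r/e,p)=1} e \sum_{i=1}^{\pi(e)} X_{e,i}
\end{equation*}
with independent $B(1/p)$'s $X_{e,i}$; this covers both parts of the theorem since, when $(r,p)=1$, the restriction $(r/e,p)=1$ is vacuous and $e_{p,r}=0$. A routine calculation using $\sum_{e\mid r} e\pi(e)=q^r$ then yields $\mathbf{E}[S]=(q^r-e_{p,r}q^{r/p})/p$ and $\operatorname{Var}(S)=p^{-1}(1-1/p)\sum_e e^2\pi(e)$. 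Since $\pi(e)=q^e/e+O(q^{e/2})$, the $e=r$ contribution dominates, giving $\operatorname{Var}(S)=rq^r\cdot p^{-1}(1-1/p)(1+o(1))$; hence the prefactor in each of (i) and (ii) is precisely $\operatorname{Var}(S)^{-1/2}$ to leading order.

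Next I would match the centering with $\mathbf{E}[N(f)]=q^r/p+(1-1/p)e_{p,r}q^{r/p}$ to within $o(\sqrt{\operatorname{Var}(S)})$. In case (i) the discrepancy between the subtracted $q^r/p$ and $\mathbf{E}[N(f)]$ is $O(q^{r/p})$, which after division by $\sqrt{\operatorname{Var}(S)}\asymp q^{r/2}$ becomes $O(q^{r/p-r/2})\to 0$, since $p>2$ whenever $p\mid r$. In case (ii), where $p=2$ and $r$ is even, the correction $(1-1/p)q^{r/p}$ is comparable to the standard deviation and must be absorbed explicitly into the centering. Once this is matched, the Lyapunov CLT applies directly: each summand $eX_{e,i}$ is bounded by $r$, so $\sum\mathbf{E}|eX_{e,i}-\mathbf{E}[eX_{e,i}]|^3\leq r\cdot\operatorname{Var}(S)$, and the Lyapunov ratio is at most $r/\sqrt{\operatorname{Var}(S)}\asymp\sqrt{r}\,q^{-r/2}\to 0$ as $q\to\infty$.

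The main delicacy is confirming the centering in case (ii), where the mean correction is of the same order as the standard deviation and must match the subtracted quantity in the statement; once this bookkeeping is settled, the rest is a standard Lyapunov CLT for a sum of bounded weighted Bernoullis whose joint distribution is furnished by Theorem \ref{t1}.
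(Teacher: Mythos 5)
You cannot justify the opening claim that ``we may assume $d\geq q^r$ throughout.'' Theorem \ref{t3} asserts the limit law for arbitrary sequences $(n,d)$ with $n,d\to\infty$, and since $q^r=p^{nr}$ grows exponentially in $n$, the inequality $d\geq q^r$ fails along many admissible sequences (take $n=d$, say). In that regime Theorem \ref{t1} is simply unavailable and the variables $\xi_h(f)$ are \emph{not} jointly independent: Lemma \ref{l1} only yields independence of a sub-collection of $h$'s whose degrees sum to at most $d$, i.e.\ of roughly $d/r$ of them at a time. This is precisely why the paper introduces Lemma \ref{l2}. The paper's proof first establishes the CLT for the genuinely independent auxiliary variables $Y_{e,i}$ (to which Lyapunov or Lindeberg applies exactly as you compute), and then transfers the limit to the actual $X_{e,i}=e\,\xi_{h_{e,i}}(f)$ by the method of moments, using that any fixed number $k$ of the $X$'s become independent once $d>kr$ and that the Gaussian is determined by its moments. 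Your argument, resting on full joint independence via Theorem \ref{t1}, omits this transfer entirely and so only proves the statement in the sub-regime $d\geq q^r$.

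You also flag ``the main delicacy'' --- matching the centering in case (ii) --- but do not carry it out. Doing so shows that $\mathbf{E}[N(f)]=q^r/2+q^{r/2}/2$, which does not coincide with the quantity $q^r/2+2q^{r/2}/r$ subtracted in the statement except when $r=4$; the discrepancy is of the same order as the standard deviation $\asymp\sqrt{r}\,q^{r/2}$, so it is not a negligible correction. (The paper's own count of the degree-$r/2$ variables and the constant value assigned to $Y_{r/2,i}$ in this case also appear to carry typos.) The bookkeeping you defer is therefore not a formality; a complete proof needs to resolve it explicitly, and your mean/variance computations give you the tools to do so but you stop short of the conclusion.
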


\begin{thm}\label{t4} Let $p,d$ both tend to infinity and assume that $n>1$ (not necessarily constant) or $r>1$. Then 
$$p^{1/2}r^{-1/2}q^{-r/2}\lb N(f)-\qrp\rb$$ converges in distribution to the 
Gaussian distribution with mean 0 and variance 1.\end{thm}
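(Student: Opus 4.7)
My plan is to deduce Theorem \ref{t4} from the exact distributional identity of Theorem \ref{t1} by applying a classical central limit theorem. Since $r$ is fixed and $p\to\infty$, eventually $p>r$ and hence $(r,p)=1$, so case (i) of Theorem \ref{t1} applies (in the regime $d\ge q^r$ that is implicit in all the corollaries of Theorem \ref{t1}): $N(f)$ has the same distribution as
$$Y\;=\;\sum_{e\mid r} e\sum_{i=1}^{\pi(e)} X_{e,i},$$
where the $X_{e,i}\sim B(1/p)$ are independent. This reduces everything to a CLT for a sum of independent bounded random variables, sidestepping the geometry of Artin--Schreier curves.

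A direct computation gives $\mathbb{E}Y=q^r/p$ (from $\sum_{e\mid r}e\pi(e)=q^r$). For the variance, the uniform estimate $\pi(e)=q^e/e+O(q^{e/2}/e)$ yields $e^2\pi(e)=eq^e+O(eq^{e/2})$, so the dominant term comes from $e=r$:
$$\sum_{e\mid r}e^2\pi(e)=rq^r+O(q^{r/2}),$$
with implicit constant depending only on $r$. Hence $\mathrm{Var}(Y)=\tfrac{p-1}{p^2}\sum_{e\mid r}e^2\pi(e)=\tfrac{rq^r}{p}(1+o(1))$, matching the normalisation in the statement. The core step is then verifying the Lindeberg condition: each centred summand $eX_{e,i}-e/p$ has absolute value at most $r$, while $\sigma:=\sqrt{\mathrm{Var}(Y)}\sim\sqrt{rq^r/p}$, so the truncation indicator in Lindeberg's condition vanishes identically once $r<\varepsilon\sigma$, i.e.\ as soon as $rp/q^r\to 0$.

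This is exactly where the dichotomy "$n>1$ or $r>1$" enters: if $r\ge 2$ then $q^r\ge p^r\ge p^2$ gives $rp/q^r\le r/p^{r-1}\to 0$, while if $r=1$ and $n\ge 2$ then $q^r=q\ge p^2$ gives $rp/q^r\le 1/p\to 0$. The excluded case $n=r=1$ is precisely the Poisson regime of Theorem \ref{t2}, where the per-summand size is of the same order as $\sigma$ and no Gaussian limit can hold. With Lindeberg verified, Lindeberg--Feller yields $(Y-\mathbb{E}Y)/\sigma\Rightarrow N(0,1)$, and substituting the expressions for $\mathbb{E}Y$ and $\sigma$ gives the statement. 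I do not expect a serious obstacle beyond this: Theorem \ref{t1} absorbs all the arithmetic and geometric content, and what remains is only to ensure that the variance asymptotics and the Lindeberg bound hold \emph{uniformly} in the joint limit, which follows from the effective prime polynomial theorem in $\fq[x]$.
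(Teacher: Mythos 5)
There is a genuine gap: you assume $d \ge q^r$, claiming this is ``implicit in all the corollaries of Theorem \ref{t1},'' but Theorem \ref{t4} carries no such hypothesis, and in the joint limit $p,d\to\infty$ with no prescribed rate, $q^r = p^{nr}$ can easily outgrow $d$. When $d < q^r$, the exact distributional identity of Theorem \ref{t1} fails: full joint independence of the indicators $\xi_h(f)$ over all monic irreducible $h$ with $\deg h \mid r$ requires (by Lemma \ref{l1}) that $d \ge \sum_{e\mid r} e\,\pi(e) = q^r$. For $d < q^r$ one only has, for each fixed $k$, eventual $k$-wise independence once $d > kr$, which is strictly weaker.

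This is precisely what Lemma \ref{l2} in the paper is for. Because the Gaussian is determined by its moments and the $k$-th moment of the centred, normalised sum involves at most $k$ of the $\xi_h$'s at a time, the eventual $k$-wise independence makes every fixed moment of the actual sum asymptotically agree with the corresponding moment of a sum of genuinely independent Bernoulli copies; the moment method then transports the CLT from the independent model to $N(f)$. Your proof needs this reduction step inserted before the Lindeberg computation can be invoked. Once that is done, the rest of your argument is sound and in fact a bit cleaner than the paper's: your variance asymptotics are correct, and your observation that Lindeberg reduces to $rp/q^r\to 0$, which holds exactly when $r\ge 2$ or $n\ge 2$ and fails for $n=r=1$ (the Poisson regime of Theorem \ref{t2}), is exactly the right way to see where the dichotomy enters. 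The paper instead groups $p$ summands at a time and applies an effective (Berry--Esseen) CLT; your direct Lindeberg--Feller route avoids the grouping device.
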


The paper is organised as follows: in the next section we review related work dealing with similar problems for other families of L-functions.
In section \ref{back} we provide the necessary background on A-S curves and L-functions. In section \ref{rmm} we describe the random
unitary matrix model for the A-S family of L-functions. In section \ref{pmain} we prove Theorem \ref{thm1} and derive Theorem \ref{thm2} from it.
In section \ref{nonlinear} we consider the 2-level density statistics of the L-zeroes in the A-S family and obtain results which agree with the random unitary matrix model. In section \ref{secdirg} we reformulate our main results in terms of Dirichlet
L-functions over $\fq[x]$ and generalise them to suitable families of Dirichlet L-functions. 
In section \ref{odd} we consider the family of A-S curves defined by \rf{as} with $f$ odd, for which we formulate
conjectures corresponding to a random symplectic matrix model and provide evidence for them in the form of theorem \ref{oddthm}. 
In section \ref{p2} we will discuss the situation
with $p=2$. In section \ref{numpoints} we prove our results on the distribution of the number of points on curves in the A-S family.

\section{Related work}

\subsection{The hyperelliptic ensemble}\label{hyp}

The main inspiration for the present work is the paper \cite{rud}, which studies similar questions and obtains similar
results for an ensemble of hyperelliptic curves. We briefly present the content of that work. 
One considers the family of curves over $\fq$, with $q$ odd, defined by equations of the form
$y^2=f(x)$, with $f$ monic squarefree of degree $d$, with $d$ odd. Denote by $\hd$ the set of all such polynomials $f$.
For $f\in\hd$ the curve defined by $y^2=f(x)$ has an L-function which is a polynomial with integer coefficients, which can be written
as 
$$L_f(z)=\prod_{i=1}^{d-1}(1-\rho_iq^{1/2}z),$$ where $\rho_i$ are the normalised zeroes of $L_f(z)$ satisfying $|\rho_i|=1$.

Denote $T_f^r=\sum_{i=1}^{d-1}\rho_i^r$. It is shown in \cite{rud} that
$$\la T_f^r\ra_{f\in\hd}=\choice{-e_{2,r}+E_{d,r},}{0<r<d,}{E_{d,r},}{r\ge d,}$$ where $e_{2,r}$ is given by \rf{e} and the error term
$E_{d,r}$ satisfies 
$$E_{d,r}=O_q\lb dq^{r/2-d}+dq^{-d/2}\rb$$ for $r\neq d-1$ (if $r=d-1$ an additional summand
of $-q/(q-1)$ appears). This provides evidence in favor of the random symplectic matrix model for the L-functions of hyperelliptic curves
because $$\la\tr U^r\ra_{U\in\usp(d-1)}=\choice{-e_{2,r},}{r<d,}{0,}{r\ge d.}$$

This result is used to obtain a result about the average number of zeroes in short intervals, which again agrees with the random symplectic
matrix model. Namely, let $V$ be as in Theorem \ref{thm2} but with Fourier transform supported in $(-2,2)$ and define $v_d$ as in Theorem
\ref{thm2}. Denote $Z_f=\sum_{j=1}^{d-1}v_d(\th_{f,j})$, where $\rho_{f,j}=e^{i\th_{f,j}}$ are the normalised zeroes of $L_f$ ($\th_{f,j}$
are real numbers well defined modulo $2\pi$) and 
$Z_U=\sum_{j=1}^{d-1}v_d(\th_{U,j})$, where $\rho_{U,i}=e^{i\th_{U,j}}$ are the eigenvalues of a matrix $U\in\usp(d-1)$. 
Then $$\la Z_f\ra_{f\in\hd}=\la Z_U\ra_{U\in\usp(d-1)}+o(1)$$ as $d\to\infty$ and $q$ is fixed.

Note that $\hd$ has on the order of $q^d$ elements while $\#\fd=O\lb q^{d-d/p}\rb$, which means that the A-S family is sparser and we get less averaging. As a result
we get large errors in our estimate for $M_r^d$ already for $r>d(2-2/p)$, while in the hyperelliptic case it is possible to obtain small errors
for $r<2d$.

\subsection{Constant $d$}

Much more is known about the statistics of zeroes for various families of L-functions over finite fields if the degree of the family is held constant
while $q\to\infty$. For an L-function of the form $L(z)=\prod_{i=1}^m(1-q^{1/2}\rho_iz), |\rho_i|=1$ we attach the class of unitary
matrices with eigenvalues $\rho_1,...,\rho_m$. For many families of L-functions (e.g. the hyperelliptic family, the family of Dirichlet
characters and families similar to our A-S family) it was shown by Katz and Sarnak (see \cite{ks}) that as $q\to\infty$ and $m$ is fixed the classes corresponding to the L-functions
of the objects in the family become equidistributed in the set of conjugacy classes of a suitable compact group of matrices
(endowed with the measure induced by the Haar measure on the group), usually
$\U(m),\usp(m)$, the orthogonal group or some similar group, called the symmetry type of the family.
These results cannot be extended to the case $m\to\infty$ (there is no meaning to equidistribution in a varying space)
but the symmetry types observed for constant $m$ can be used to give random matrix models to families of L-functions.
The model for our family of A-S L-functions is presented in section \ref{rmm}.

\subsection{The number of points on curves}

The distribution of the number of points on curves in various families has been studied extensively in recent years.
It follows from \rf{defl},\rf{zeros} below 
that the number of points on a curve $C$ with normalised L-zeroes $\rho_1,...,\rho_{2g}$ ($g$ is the genus of the curve)
is $q+1-q^{1/2}\sum_{i=1}^{2g}\rho_i$. 
The distribution of $T^1_C=\sum_{i=1}^{2g}\rho_i$ as $C$ varies through some family of curves over $\fq$ with genus $g$ as $g\to\infty$
is considered in \cite{kurrud} for the hyperelliptic family, in \cite{kum},\cite{xiong} for the family of trigonal covers of $\mathbf{P}^1$
(with further generalisation to $l$-gonal covers, $l$ a prime dividing $q-1$) and in \cite{plane} for the family of plane curves.
A family of curves in higher dimensional projective spaces has been studied in \cite{kurwig}.
The distribution of the number of points for the family of A-S curves is considered in section \ref{numpoints}.
 
\subsection{Number fields}

Results similar to Theorem \ref{thm2} have previously been obtained for families of L-functions over number fields.
The family of quadratic Dirichlet L-functions 
(with varying modulus) is considered in \cite{miller} and the family of all Dirichlet L-functions with given modulus is considered
in \cite{hughesrud}.

For example let $q$ be a prime number and $\chi$ a Dirichlet character modulo $q$. Let $L(s,\chi)$ be the corresponding L-series and
$\rho_1,\rho_2,...$ its sequence of nontrivial zeroes ordered by increasing absolute value. For simplicity we assume the
Generalised Riemann Hypothesis (although it is not assumed in \cite{hughesrud}) so that $\re\rho_i=1/2$. 
Denote $\gam_i=\im\rho_i$. Denote by $\Xi$ the set of nontrivial Dirichlet characters modulo $q$. 
The average number of L-zeroes satisfying $|\gam_i|<T$ as $\chi$ varies uniformly in $\Xi$, $T$ is fixed and $q\to\infty$ is known to be 
$$\la N(T,\chi)\ra_{\chi\in\Xi}\sim \frac{T}{\pi}\log qT,$$ so we normalise $\del_i=\frac{\log q}{\pi}\gam_i$ 
(now we expect on average one zero with $|\del_i|<1$). Let $V\in\sr$ be a window function,
$Z_\chi=\sum_{i=1}^\ity V(\del_i)$. It is shown in \cite{hughesrud} that if $\hW$ is supported on $\lbb-2,2\rbb$ then
$$\la Z_\chi\ra_{\chi\in\Xi}\to\inti V(t)\dd t$$ as $q\to\ity$. Studying nonlinear statistics they obtain agreement with a random unitary
matrix model (with matrix size around $\log q$) for restricted classes of window functions.

\section{Background on Artin-Schreier curves and L-functions}\label{back}

The material reviewed in this section can be found in \cite{moreno},\cite{rosen},\cite{stepanov}.

\subsection{Notation and conventions}

The notation and conventions introduced in this subsection apply to the entire paper, including the introduction.

When we use the $O$-notation (asymptotic bound) the implicit constant is absolute, except when the bounded quantity depends on a
window function $V$, in which case it may depend on $V$. If there are additional parameters upon which the bound depends we write
them explicitly as a subscript (e.g. $f=O_\eps(g)$). The $o$-notation is always used for $d\to\infty$ and $f=o(g)$ implies that
$f/g\to 0$ as $d\to\infty$ regardless of how the other parameters on which $f,g$ depend vary, except possibly for a window function $V$ which
is always assumed to be fixed. 

For a pair of integers $m,n$ we denote by $(m,n)$ their greatest common divisor. For a pair of polynomials $f,g$, $(f,g)$ denotes their
greatest monic common divisor and $f\bmod g$ denotes the residue class of $f$ modulo $g$. 

For a finite set $S$ we denote by $\#S$ its number of elements.

\subsection{Geometric properties of Artin-Schreier curves}\label{geom}

Let $F$ be a field of characteristic $p>0$.
An Artin-Schreier (A-S shortly) curve over $F$ is the normalisation of the projective closure of the affine curve
defined by an equation of the form $y^p-y=f(x)$ with $f\in F[x]$ a polynomial of degree $d>0$. We denote this curve by $C_f$.
If $(d,p)=1$ then $C_f$ is geometrically irreducible \cite[\S 1.4.2]{stepanov} 
(however this condition is far from necessary for geometric irreducibility).
We assume throughout that $(d,p)=1$.
The affine part of $C_f$ is smooth, as $\d (y^p-y-f(x))/\d y=-1$ never vanishes. The curve $C_f$ has exactly one point (always $F$-rational)
outside its affine part and the genus of $C_f$ is $g=(p-1)(d-1)/2$ (this follows from the material of \cite[\S 4.6.2]{moreno}).

Let $(x,y)$ be an affine point on the curve $C_f$, possibly defined over the algebraic closure of $F$. 
Then $(x,y),(x,y+1),...,(x,y+p-1)$ are all points on $C_f$ and these are all the points
of $C_f$ with abscissa $x$. We see in particular that if $F=\F_q$ is a finite field then the number of points on the affine part of $C_f$ is divisible by $p$ and the total
number of points (including the single point at infinity) equals 1 modulo $p$.

If $g(x)=f(x)+a^p(x^{pk}-x^k)$ for some $a\in F$ then $C_g$ is $F$-isomorphic to $C_f$ via the substitution $x,y\to x,y+ax^k$. If $F$ is a finite
field then every element of $F$ is a $p$-th power and so every curve $C_f$ with $\deg(f)=d$ is isomorphic to a curve $C_g$ s.t. 
$g=a_dx^d+...+a_0$ with $a_{kp}=0$ for all $k>0$. 

Now assume that $F=\fq$ is a finite field. An element $a\in F$ can be written as $a=b^p-b,b\in F$ iff $\tr_{F/\F_p}a=0$ (this follows
from the Hilbert 90 theorem or more simply by noting that the map $b\mapsto b^p-b$ is linear with one-dimensional kernel $\F_p$, while
its image is contained in the kernel of the trace map). If $g(x)=f(x)+a$ with $a\in F$ satisfying $\tr_{F/\F_p}a=0$ then $C_g$ is isomorphic
to $C_f$ via $x,y\mapsto x,y+b$, where $a=b^p-b$. Even if $\tr_{F/\F_p}a\neq 0$ the curves $C_g$ becomes isomorphic to $C_f$ over $\F_{q^p}$
(because $\tr_{\F_{q^p}/\F_p}a=0$). If $g=f+a$ we say that $C_g$ is a twist of $C_f$.

\subsection{Artin-Schreier curves over finite fields and their L-functions}\label{lfunc}

Let $F=\F_q,q=p^n$ be a finite field. Let $C$ be a smooth projective curve over $\fq$ with genus $g$. Denote by $N_r(C)$ the number of $\F_{q^r}$-points
on $C$. The L-function of $C$ is defined by the power series 
\beq\label{defl}L(z)=\exp\lb \sum_{r=1}^\infty\frac{q^r-1-N_r(C)}{r}z^r\rb.\eeq
It turns out that $L(z)$ is a polynomial of degree $2g$ and in fact we may write
\beq\label{zeros}L(z)=\prod_{i=1}^{2g}(1-\rho_iq^{1/2}z).\eeq The normalised zeroes $\rho_i$ come in conjugate pairs and they all satisfy $|\rho_i|=1$
(the Riemann Hypothesis for curves over a finite field). For all these properties of the L-function of a curve over a finite field see
\cite[\S 5]{rosen}, \cite[\S V]{stepanov}, \cite[\S 3]{moreno}.

Let $\zeta$ be a primitive $p$-th (complex) root of unity. We define an additive character $\psi:\F_p\to\C^\times$ by
$\psi(a)=\zeta^a$ (this is well defined). All the nontrivial additive characters of $\F_p$ are of this form and there are $p-1$ characters
corresponding to the $p-1$ roots of unity. We denote by $\t_{s/t}:\F_s\to\F_t$ the trace map from the field with $s$ elements to the field with $t$
elements, provided $s$ is a power of $t$.

Now let $f\in F[x]$ be a nonconstant polynomial of degree $d$ (we always assume $(d,p)=1$) and $C_f$ the corresponding A-S curve. Let $r$ be a natural number and $x\in\F_{q^r}$.
Any element $a\in\fqr$ can be written as $a=y^p-y$ with $y\in\fqr$ iff $\t_{q^r/p}a=0$ (see previous subsection).
Applying this to $f(x)$ we see that $C_f$ has an affine point with abscissa $x$ iff $\t_{q^r/p}f(x)=0$, in which case it has exactly $p$ such
points (namely $(x,y),(x+1,y),...,(x+p-1,y)$).
Using the orthogonality relation for the additive characters of $\F_p$ this can be restated as follows: the number of $\fqr$-points on $C_f$ with abscissa $x\in\fqr$ equals 
$$1+\sum_{\psi\in\Psi}\psi(\t_{q^r/p}x)$$ and so the total number of points on $C_f$ (including the infinite point) is
\beq\label{numpts}N_r(C_f)=q^r+1+\sum_{\al\in\fqr}\sum_{\psi\in\Psi}(\t_{q^r/p}f(\al)).\eeq

Define $$L_{f,\psi}(z)=\exp\lb\sum_{r=1}^\infty\sum_{\al\in\fqr}\psi(\t_{q^r/p}f(\al))\frac{z^r}{r}\rb.$$
It follows from \rf{defl} and \rf{numpts} that the L-function of $C_f$ can be written as a product
$$L_f(z)=\prod_{\psi\in\Psi}L_{f,\psi}(z).$$
Each function $L_{f,\psi}$ turns out to be a polynomial of degree $d-1$ with constant term $1$, see \cite[\S I.3]{stepanov}, \cite[\S 4.6.2]{moreno}.
We can write $$L_{f,\psi}=\prod_{i=1}^{d-1}(1-\rho_{\psi,i}q^{1/2}z),$$ where the $\rho_{\psi,i},1\le i\le d-1,\psi\in\Psi$ are
all the normalised zeroes of $L_f(z)$ and they satisfy $|\rho_{\psi,i}|=1$.  
To understand the behaviour of the zeroes of $L_f(z)$ it is enough to study the zeroes of the individual $L_{f,\psi}$ and the relationship
between the L-functions corresponding to different characters. These functions are called primitive L-functions or the primitive factors of $L_f$.
Note that for two conjugate characters $\psi,\pb\in\Psi$ the two L-functions $L_{f,\psi},L_{f,\pb}$ are conjugate and so are their zeroes.
Thus for $p>2$ the primitive factors come in conjugate pairs (for $p=2$ there is only one nontrivial character and $L_f$ itself is a primitive
L-function).

Let $f,g\in\fq[x]$ be polynomials of degree $d$ s.t. $g(x)=f(x)+a(x^{kp}-x^k)$ for some $a\in F$ and natural $k$. Then for all $\al\in\fqr$
we have $\tqrp g(\al)=\tqrp f(\al)$, since $$\tqrp(a(\al^{kp}-\al^k))=\tqp(a\tqrq(\al^{kp}-\al^k))=\tqp(0)=0.$$
Therefore $L_{f,\psi}=L_{g,\psi}$. This agrees with the fact that $C_f,C_g$ are isomorphic and so $L_f=L_g$.
Now assume that $g(x)=f(x)+a$ for some $a\in\fq$. From \rf{defl} we see that $L_g(z)=L_f(\psi(\tqp a)z)$ and denoting
by $\rho_i,\rho'_i,1\le i\le d-1$ the normalised roots of $L_{f,\psi},L_{g,\psi}$ respectively we see that $\rho'_i=\psi(-\tqp a)\rho_i$.
If $\tqp a=0$ then $L_{f,\psi}=L_{g,\psi}$ and as we have seen in the previous subsection the curves $C_f,C_g$ are isomorphic in this case.

\subsection{The Artin-Schreier family}\label{asfam}

Denote $$\fd=\left\{f(x)=\sum_{i=0}^d a_ix^i\big|a_i\in\fq, a_d\neq 0, a_i=0 \mbox{ if } p|i \mbox{ and } i\ge 0\right\}.$$ We refer to $\{C_f\}_{f\in\fd}$ as the Artin-Schreier (A-S in short) family of curves with parameter
$d$ over $\fq$ and to $\{L_{\psi,f}(z)\}_{f\in\fd}$ the as the A-S family of L-functions with parameter $d$ over $\fq$. Note that the latter does
not depend on the choice of $\psi\in\Psi$ because for $a\in\fp^\times$ we have $L_{f,\psi^a}(z)=L_{af,\psi}(z)$ (this follows from \rf{defl}),
so replacing $\psi$ with $\psi^a$ permutes the family of L-functions.

Fix some element $c\in\fq$ with $\tqp c=1$.
For $w\in\fp$ denote $\fd^w=\{f+wc|f\in\fd\}$. By the observations in the end of the previous subsection we have 
$$\{L_{f,\psi}(z)|f\in\fd^w\}=\{L_{f,\psi}(\psi(w)z)|f\in\fd\},$$ so the statistics of zeroes of the L-functions of $f\in\fd^w$ is essentially
the same as that of $f\in\fd$.

The size of the family $\fd$ and each $\fd^w$ is $\#\fd^w=\#\fd=q^{d-\floor{d/p}}(q-1)$. 
Denote by $\fd'$ the set of all degree $d$ polynomials in $\F_q[x]$.
Define the map $\mu:\fd'\to\cup_{w\in\fp}\fd^w$ by 
$$\mu\lb \sum_{i=0}^da_ix^i\rb=(\tqp a_0)c+\sum_{i=1}^d\lb \sum_{j=1}^{\floor{\log_p(d/i)}}a_{ip^j}\rb x^i.$$
This map is precisely $q^{\floor{d/p}+1}/p$ to one. 
It follows from the observations in the end of the previous subsection that for any $r$ and any $\al\in\fqr$ we have
$\tqrp(\mu(f)(\al))=\tqrp f(\al)$ and therefore $L(f,\psi)=L(\mu(f),\psi)$.

We conclude that studying the statistics of zeroes of L-functions of $f\in\fd'$ reduces to studying it for $\fd^w,w\in\F_p$, which
in turn reduces to studying it for the family $\fd$. Henceforth we only consider the family $\fd$.

\section{The random matrix model}\label{rmm}

In recent decades it has been suggested that the zeroes of L-functions (of all sorts) behave as the spectra of matrices drawn randomly (with some
natural measure) from some classical ensemble of matrices. We will illustrate this approach on our example of the A-S L-functions.
For the rest of this section assume $p>2$.

Denote by $\U_N$ the group of $N\times N$ unitary matrices. This is a compact Lie group and so it has a Haar measure.
We may draw a random matrix $U\in\U_N$ uniformly w.r.t. the Haar measure and ask about the statistics of its spectrum.
The eigenvalues of $U$ lie on the unit circle, just like the zeroes of $L_{f,\psi}$. We take $N=d-1$ (recall $d=\deg f$) and
we model the set of roots of $L_{f,\psi}$ for fixed $\psi$ and "random" $f$ (i.e. $f$ may vary through some large family, e.g. $\fd$ with
either $q\to\infty$ or $d\to\infty$, or both) by the spectrum of a random matrix from $\U_{d-1}$.
This model is suggested by the result due to N. Katz and P. Sarnak for the case of fixed $d$ and $q\to\infty$, stating that the sets of normalised zeroes
of the L-functions in the A-S family become equidistributed in the space of sets of eigenvalues of matrices in $\U_{d-1}$ with the measure induced from the Haar measure on $U_{d-1}$. See Theorem 3.9.2 in \cite{monper}.
To model the $p-1$ sets of zeroes of $L_{f,\psi},\psi\in\Psi$ jointly for $p>2$ we may take $(p-1)/2$ independent random matrices from
$\U_{d-1}$ and
their conjugates (see section \ref{lfunc}).

Now we formulate some conjectures on the statistics of the zeroes of $L_{f,\psi},f\in\fd$, of which our main theorems are special cases.
In all that follows assume $(d,p)=1$.
We are interested in the case where $d\to\infty$ and $q$ may be fixed or vary as we please.
First we consider the linear statistics - the number of zeroes in short intervals and the related quantity of the average of powers of the zeroes.
Since multiplying a matrix $U\in\U_N$ by a scalar matrix $e^{i\th}I_N$ rotates the eigenvalues by an angle of $\th$ it is obvious that
the average number of eigenvalues contained in an arc of length $l$ is $Nl/2\pi$ (as $U$ is drawn uniformly at random w.r.t. the Haar measure).
We will be interested in $l=O(1/d)$ as $d\to\infty$, the so-called local regime (it is easier to obtain results for larger arcs).
Using our model we formulate

\begin{conj}\label{conj1} Take any $\psi\in\Psi$. 
Let $C>0$ be a constant. For every natural number $d$ let $I_d$ be any arc on the unit circle of length
$C/d$. Then the average number of zeroes of $L_{f,\psi}$ contained in $I_d$ as $f$ is chosen uniformly at random from
$\fd$ is $C/2\pi+o(1)$ as $d\to\infty$.\end{conj}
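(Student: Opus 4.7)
The natural approach is to sandwich the indicator function of $I_d$ between bandlimited test functions admissible in Theorem \ref{thm2}. Without loss of generality take $I_d = \{e^{i(\theta + \alpha)} : |\alpha| \le C/(2d)\}$ and set $V_0(t) = \chi_{[-C/2,\,C/2]}(t)$. If $V_0$ were an admissible window for Theorem \ref{thm2}, the associated periodised function $v_d^0(t) = \sum_n V_0(d(t+2\pi n))$ would satisfy $v_d^0(\theta_j - \theta) = 1$ precisely when $e^{i\theta_j} \in I_d$, so the sum $S_f$ would count the zeroes of $L_{f,\psi}$ in $I_d$ and Theorem \ref{thm2} would read $\afd{S_f} = C/(2\pi) + o(1)$, which is exactly Conjecture \ref{conj1}.

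The plan is therefore to invoke the Beurling--Selberg extremal construction: for any bandwidth $\Delta > 0$ there exist entire functions $V_\pm \in \sr$ of exponential type $\Delta$ (equivalently, with $\hat V_\pm$ supported in $(-\Delta, \Delta)$) satisfying $V_-(t) \le V_0(t) \le V_+(t)$ for all real $t$ and with $\int_\R (V_\pm - V_0)\, dt$ equal to the optimal Selberg deficiency, of size $\asymp 1/\Delta$. Choosing $\Delta = 2 - 2/p - \epsilon$ makes $V_\pm$ admissible in Theorem \ref{thm2}; applying that theorem to $V_+$ and $V_-$ separately and using the pointwise sandwich $v_d^{-}(\theta_j - \theta) \le v_d^0(\theta_j - \theta) \le v_d^{+}(\theta_j - \theta)$ term by term yields
\[
\frac{1}{2\pi} \int_\R V_-\, dt + o(1) \;\le\; \afd{S_f} \;\le\; \frac{1}{2\pi} \int_\R V_+\, dt + o(1),
\]
and since $\int V_0 = C$ this gives $\afd{S_f} = C/(2\pi) + O_p(1) + o(1)$.

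The main obstacle is that the Beurling--Selberg deficiency $\asymp 1/\Delta$ is known to be sharp for indicators of intervals, so as long as $\Delta$ is capped at $2 - 2/p$ the sandwich leaks a constant (depending on $p$ and $C$) and the best one extracts is the correct order of magnitude, not the precise asymptotic. To pin down the constant one must let the bandwidth $\Delta = \Delta_d$ tend to infinity with $d$, i.e.\ one needs a strengthening of Theorem \ref{thm2} in which $\hat V$ may be supported on $(-\Delta_d, \Delta_d)$ with $\Delta_d \to \infty$. This corresponds to pushing Theorem \ref{thm1} beyond the barrier $r \le (2 - 2/p)d$, where the present error $O(rq^{r/2 - (1-1/p)d})$ already exceeds the expected main term $-e_{p,r}q^{r/p - r/2}$; closing that gap would require extracting genuine cancellation from the family average of high-degree Frobenius traces beyond what the Bombieri-type exponential sum bounds deliver. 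An alternative route would be to combine the linear statistic with a second-moment bound descended from Theorem \ref{thm2p} and apply Chebyshev's inequality directly, bypassing the bandlimited sandwich, but controlling fluctuations on the native scale $1/d$ appears to demand substantial new input beyond the techniques developed in this paper, which is why the statement remains conjectural here.
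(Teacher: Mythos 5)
This statement is labeled a conjecture in the paper and is not proved there; your write-up correctly recognizes this and ends by saying so, so there is no gap to flag --- you have accurately diagnosed the obstruction rather than claimed to overcome it.

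Your analysis matches the paper's own framing. You identify the barrier precisely: Theorem \ref{thm2} caps the support of $\hat V$ at $2-2/p$, so a Beurling--Selberg sandwich by admissible majorants/minorants leaks a Selberg deficiency of order $1/(2-2/p)\asymp 1$, giving only the correct order of magnitude and not the constant $C/2\pi$. The paper packages the missing ingredient as Conjecture \ref{conj3}, namely the bound $M^r_d = O_\eps\bigl(q^{\eps r + (1/p-1/2)d}\bigr)$ for $r\ge d$; combined with the unconditional Theorem \ref{thm1}, this would make the full Fourier expansion of the periodised indicator summable with the right constant --- this is exactly the ``strengthening beyond the barrier $r\le (2-2/p)d$'' that you describe, and the paper says the derivation of Conjecture \ref{conj1} from it is a standard argument of the type used to prove Theorem \ref{thm2}. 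The paper further observes that Conjecture \ref{conj3} would follow from a function-field analogue of Montgomery's conjecture on primes in arithmetic progressions, which situates the difficulty. Two smaller remarks: (i) the classical Beurling--Selberg extremals have only $1/t^2$ decay and so are not Schwartz, hence not literally admissible in Theorem \ref{thm2} as stated; the proof only needs absolute summability of the Fourier coefficients, so this is cosmetic, but worth a word. (ii) The paper notes, right after stating the conjecture, that the intermediate case of arcs of length $l$ with $ld\to\infty$ is resolved in \cite{uniform}; it is precisely the local scale $l\asymp 1/d$, the one you target, that remains open.
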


Instead of just looking at arcs we may take a smooth window function to count the zeroes. Let $V(t)\in\sr$. 
The function 
$v_d(t)=\sum_{n=-\infty}^\infty V(d(t+2\pi n))$ is well-defined and periodic with period $2\pi$. It can be viewed as a function
on the unit circle. We say that $v_d(t)$ is the periodic window function associated with $V(t)$ with scaling parameter $d$.
For every $z$ with $|z|=1$ and real number $\th$ the value of $v_d(\arg(z)+\th)$ is well-defined.
Denote $S_f=\sum_{j=1}^{d-1}v_d(\th_j-\th)$, where $\rho_i$ are the normalised zeroes of $L_{f,\psi}$. Conjecture \ref{conj1} is equivalent
to the following statement: the average of $S_f$ as $f$ is chosen uniformly at random from
$\fd$ is $$\frac{1}{2\pi}\int_{-\infty}^\infty V(t)dt+o(1)$$ as $d\to\infty$ for any $V\in\sr$ and $\th\in\R$ (because the indicator of an interval can be approximated by a window function 
in $\sr$ and any
window function can be approximated by a superposition of interval indicators). 

Now we consider the quantity $T_{f,\psi}^r=\sum_{i=1}^{d-1}\rho^r_{f,\psi,i}$, where as usual $\rho_{f,\psi,i}$ are the normalised zeroes
of $L_{f,\psi}$. The uniform distribution of the L-zeroes on the unit circle suggests the following

\begin{conj}\label{conj3} Take any $\psi\in\Psi$. For every $\eps>0$ the average of $T_{f,\psi}^r$ where $f$ is drawn uniformly at random from $\fd$
is $$O_\eps\lb q^{\eps r+(1/p-1/2)d}\rb$$ as $d\to\infty$ and $r\ge d$ ($q$ may vary with $d$ as we please).\end{conj}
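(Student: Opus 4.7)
The plan is to follow the same strategy as in the proof of Theorem \ref{thm1}, push it as far as it will go, and identify the gap that any proof of the conjecture will need to close.

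First I would write $T^r_{f,\psi}=-q^{-r/2}S_r(f,\psi)$ with $S_r(f,\psi)=\sum_{\al\in\fqr}\psi(\t_{q^r/p}f(\al))$ and average over $f\in\fd$. For each fixed $\al\in\fqr$, the inner average $\la\psi(\t_{q^r/p}f(\al))\ra_{f\in\fd}$ factors over the free coefficients $a_i$ (those with $p\nmid i$) and, by orthogonality of additive characters on $\fq$, equals --- up to an $O(1/q)$ correction coming from the constraint $a_d\neq 0$ --- the indicator of the simultaneous vanishing of $\t_{q^r/q}(\al^i)$ for all $i\in[1,d]$ with $p\nmid i$. This reduces $\la T^r_{f,\psi}\ra$ to a signed, $q^{-r/2}$-weighted count of such $\al\in\fqr$.

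Next I would stratify by the minimal subfield $\F_{q^e}$ (with $e\mid r$) containing $\al$. For $\al\in\F_{q^e}$ one has $\t_{q^r/q}(\al^i)=(r/e)\t_{q^e/q}(\al^i)$, so the conditions are automatic when $p\mid r/e$: the corresponding $\al$ exhaust $\F_{q^{r/p}}$ and contribute the main term $-\epr q^{r/p-r/2}$. For $p\nmid r/e$, interpreting $\t_{q^e/q}(\al^i)=\sum_{j=0}^{e-1}\be_j^i$ as the $i$-th Newton power sum of the Galois conjugates $\be_j=\al^{q^j}$, the Frobenius identity $p_{pk}=p_k^p$ upgrades the assumed vanishing of $p_i$ for $p\nmid i$, $i\le d$ to the vanishing of all $p_i$ with $i\le d$, whence Newton's identities force $e_k=0$ for every $k\le\min(e,d)$ with $p\nmid k$. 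For $2\le e\le d$ this writes the minimal polynomial of $\al$ as $T^{e\bmod p}\tilde Q(T)^p$, hence either inseparable or a $p$-th power, contradicting the irreducibility and separability of a minimal polynomial of a generator of $\F_{q^e}$ over $\fq$; so no such $\al$ exists. For $e>d$ only $d-\floor{d/p}$ of the coefficients of the minimal polynomial are forced to vanish, so it ranges over at most $q^{e-(1-1/p)d+O(1)}$ possibilities, giving $\le e\cdot q^{e-(1-1/p)d+O(1)}$ admissible $\al$.

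Summing over $e\mid r$ with $e>d$, the dominant $e=r$ contribution yields
$$\la T^r_{f,\psi}\ra+\epr q^{r/p-r/2}=O\lb r\,q^{r/2-(1-1/p)d}\rb,$$
which is precisely the error term of Theorem \ref{thm1}. This bound already implies the conjecture for $r$ up to about $d(1+2\eps)$, and at the opposite extreme the trivial Weil estimate $|T^r_{f,\psi}|\le d-1$ alone yields the conjecture once $\eps r\ge(1/2-1/p)d+\log_q d$. The genuinely difficult regime is therefore the intermediate one, where $r$ lies between roughly $d$ and $(1/2-1/p)d/\eps$: here the conjecture requires a bound of order $q^{(1/2+\eps)r-(1/2-1/p)d}$ on the number of degree-$r$ irreducible polynomials in $\fqx$ whose coefficient at $T^{r-k}$ vanishes for every $k\in[1,d]$ with $p\nmid k$ --- a square-root saving over the trivial count $q^{r-(1-1/p)d}$ of all such sparse polynomials without the irreducibility constraint.

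The hard part will be obtaining this square-root cancellation for sparse irreducible polynomials of large degree with prescribed low-degree coefficients. The most natural attempt, in my view, is to pass to the second moment $\la|T^r_{f,\psi}|^2\ra$, which reduces to counting pairs $(\al,\be)\in\fqr^2$ with $\t_{q^r/q}(\al^i-\be^i)=0$ for the relevant $i$, and then to apply Deligne's Weil II bounds to the point count on the variety cut out by these symmetric equations; alternatively one might try to quantify the Katz--Sarnak equidistribution of the sheaves underlying $L_{f,\psi}$ as $f$ varies in $\fd$. Either route demands geometric input well beyond the elementary character-sum manipulations used for Theorem \ref{thm1}, and as far as I can see such input is not currently available in the form needed.
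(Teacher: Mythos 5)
The statement you were asked to prove is Conjecture \ref{conj3}; the paper does not prove it. The author only remarks, immediately after stating it, that the conjecture would follow from a function-field analogue of Montgomery's conjecture on the distribution of primes in arithmetic progressions (see \cite[\S 13.1]{mv}). There is thus no proof in the paper to compare against, and your conclusion --- that the statement lies beyond the elementary character-sum computation underlying Theorem \ref{thm1} --- agrees with the author's own assessment.

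Your diagnostic analysis is sound and in fact reproduces the paper's Theorem \ref{thm1} argument from scratch: the reduction to $\sum_{\al\in\fqr}\psi(\tqrp f(\al))$ via Lemma \ref{lem1}; the factorisation of the $\fd$-average over the free coefficients $a_i$; the identification of the main term $-e_{p,r}q^{r/p-r/2}$ from $\al\in\F_{q^{r/p}}$; the use of Newton's identities together with the characteristic-$p$ relation $p_{pk}=p_k^p$ to rule out $\al$ of intermediate degree $2\le e\le d$; and the trivial bound $q^{e-(1-1/p)d+O(1)}$ on admissible minimal polynomials of degree $e>d$, yielding the error $O(rq^{r/2-(1-1/p)d})$. (Compare Lemmas \ref{lem2} and \ref{mainlem} and Proposition \ref{irr}.) Your observation that this, together with the trivial bound $|T^r_{f,\psi}|\le d-1$, still leaves an intermediate range of $r$ uncovered is correct, and so is your identification of the missing input as a square-root saving in the count of degree-$r$ irreducible polynomials with prescribed vanishing of low-degree coefficients. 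That is precisely what the Montgomery-type heuristic the paper alludes to would provide: a variance estimate for prime polynomials in residue classes modulo $x^{d+1}$, of the kind your second-moment route requires. So while you have not proved the statement, neither has the paper; your account of the obstruction and of a plausible conditional route is consistent with the author's remarks.
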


It can be shown by a standard argument that Conjecture \ref{conj3} combined with Theorem \ref{thm1} 
implies Conjecture \ref{conj1}. See the proof of Theorem
\ref{thm2} in section \ref{pthm2} for this kind of argument. We remark that Conjecture \ref{conj3} would follow from a function field analogue
of a conjecture of H. Montgomery about the distribution of primes in arithmetic progressions (see \cite[\S 13.1]{mv}).

At this point a simpler model for the zeroes of L-functions in the A-S family would be just $d-1$ independent random points on the unit circle
(with uniform distribution), which is also consistent with Conjectures \ref{conj1},\ref{conj3} and Theorems \ref{thm1},\ref{thm2}.
However in section \ref{nonlinear} we study nonlinear statistics of the zeroes which show agreement with the random unitary matrix model and
disagreement with the independent random points model.

Finally we note that for $p=2$ we need a different model, namely a random symplectic matrix model. 
See section \ref{p2} for a description of this model and some partial results.

\section{Proof of the main results}\label{pmain}

\subsection{Proof of Theorem \ref{thm1}}\label{pthm1}

We keep the notation of the previous section. Let $p,q$ be as in section \ref{intro}, $\psi\in\Psi$, $d$ a natural number satisfying $(d,p)=1$.
First we need a lemma

\begin{lem}\label{lem1} Let $\psi\in\Psi$ be a character, $f\in\F_q[x]$ of degree $d$. Let $\rho_i,1\le i\le d-1$ be the normalised
zeroes of $L_{f,\psi}$ and $T^r_{f,\psi}=\sum_{i=1}^{d-1}\rho_i^r$. Then $$T^r_{f,\psi}=-q^{-r/2}\sum_{\al\in\fqr}\psi(\tqrp f(\al)).$$
\end{lem}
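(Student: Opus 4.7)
The plan is a direct comparison of two power series expressions for $\log L_{f,\psi}(z)$. On one hand, the definition \rf{deflpsi} is already given in exponential form, so taking the logarithm yields
$$\log L_{f,\psi}(z)=\sum_{r=1}^\infty S_r\,\frac{z^r}{r},\qquad S_r:=\sum_{\al\in\fqr}\psi(\tqrp f(\al)).$$
On the other hand, the material recalled in section \ref{lfunc} tells us that $L_{f,\psi}(z)$ is a polynomial of degree $d-1$ with constant term $1$, and by the Riemann Hypothesis for curves it factors as $L_{f,\psi}(z)=\prod_{i=1}^{d-1}(1-\rho_i q^{1/2}z)$ with $|\rho_i|=1$.

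Taking logarithms of this product and Taylor-expanding each factor gives
$$\log L_{f,\psi}(z)=\sum_{i=1}^{d-1}\log(1-\rho_i q^{1/2}z)=-\sum_{r=1}^\infty\frac{q^{r/2}z^r}{r}\sum_{i=1}^{d-1}\rho_i^r=-\sum_{r=1}^\infty q^{r/2}T^r_{f,\psi}\,\frac{z^r}{r}.$$
Both expansions converge in a neighbourhood of $z=0$ (the right-hand product is entire and the left-hand series converges by the trivial bound $|S_r|\le q^r$), so we may equate coefficients of $z^r/r$ to obtain $S_r=-q^{r/2}T^r_{f,\psi}$, and dividing by $-q^{r/2}$ yields the claimed formula.

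There is essentially no obstacle: the lemma is a formal identity that follows at once from the definition of $L_{f,\psi}$ together with its factorisation in terms of normalised zeros. The only inputs beyond power-series manipulation are that $L_{f,\psi}$ is a polynomial of degree $d-1$ and that its inverse zeros have absolute value $q^{1/2}$, both of which are quoted from the references in section \ref{lfunc}.
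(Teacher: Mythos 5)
Your proof is correct and follows exactly the route the paper indicates (comparing the logarithm of the exponential definition \rf{deflpsi} with the logarithm of the factorisation into linear factors, then equating coefficients of $z^r/r$); the paper simply abbreviates this computation by citing \rf{defl}, \rf{zeros} and Stepanov, and you have supplied the standard details.
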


\begin{proof} This is a well known fact that follows directly from \rf{defl} and \rf{zeros}. See \cite[\S I.3.3]{stepanov} for details.\end{proof}

For the rest of this section fix $\psi\in\Psi$.
To prove Theorem 1 we need to estimate the average of the sum $\sum_{\al\in\fqr}\psi(\tqrp f(\al))$ as $f$ varies uniformly through $\fd$.
Recall that $\fd$ consists of the degree $d$ polynomials $f=\sum_{i=0}^d a_ix^i$ with
$a_d\neq 0$ and $a_{kp}=0$ for all $k\ge 0$.
 
We begin with a simple observation that establishes a weak form of Theorem 1, namely with $r<d$.

\begin{lem}\label{lem0} Assume $r<d$. Let $\al\in\fqr$ be an element. Then 
\beq\label{elem0}\afd{\psi(\tqrp f(\al))}=\choice{1,}{\al=0 \mbox{ or } p|r,\al\in\F_{q^{r/p}},}{0,}{\mbox{otherwise}.}\eeq\end{lem}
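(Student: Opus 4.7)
The plan is to factor $\psi(\tqrp f(\alpha))$ multiplicatively over the free coefficients of $f$, apply orthogonality of additive characters of $\F_q$ to each, and separately handle the single coefficient $a_d$ that is constrained to $\fqs$. The only real work is ruling out an anomalous $-1/(q-1)$ contribution from that $a_d$-average.

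Write $I=\{i:1\le i\le d,\,p\nmid i\}$, so $f=\sum_{i\in I}a_ix^i$ with $a_i$ ($i\in I\setminus\{d\}$) free in $\F_q$ and $a_d\in\fqs$. Using $\tqrp=\tqp\circ\tqrq$ and $\F_q$-linearity of $\tqrq$, and setting $b_i:=\tqrq(\alpha^i)\in\F_q$, one gets
$$\psi(\tqrp f(\alpha))=\prod_{i\in I}\psi(\tqp(a_ib_i)).$$
For $i\in I\setminus\{d\}$, averaging over $a_i\in\F_q$ gives $1$ if $b_i=0$ and $0$ otherwise, since $a\mapsto\psi(\tqp(ab_i))$ is a nontrivial character of $\F_q$ whenever $b_i\ne 0$. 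Hence $\afd{\psi(\tqrp f(\alpha))}=0$ unless $b_i=0$ for all $i\in I\setminus\{d\}$; in this surviving case the average over $a_d\in\fqs$ yields $1$ if $b_d=0$ and $-1/(q-1)$ otherwise. Thus the lemma reduces to showing that, under $r<d$, the vanishing $b_i=0$ for all $i\in I\setminus\{d\}$ implies either $\alpha=0$ or ($p|r$ and $\alpha\in\F_{q^{r/p}}$), and that $b_d=0$ in both outcomes.

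The Frobenius identity $b_{pk}=b_k^p$ (both sides equal $\tqrq(\alpha^{pk})$ since the $p$-th power commutes with $\tqrq$) bootstraps the hypothesis to $b_j=0$ for \emph{all} $1\le j\le d-1$. Let $s=[\F_q(\alpha):\F_q]$, so $s|r$, and set $\beta_k=\alpha^{q^k}$, $k=0,\dots,s-1$; then $b_j=(r/s)\sum_k\beta_k^j\in\F_q$. If $p\nmid r/s$, let $h(x)=\prod_{k=0}^{s-1}(1-\beta_kx)\in\F_q[x]$. Its formal logarithmic derivative in $\F_q[[x]]$ gives
$$-\frac{xh'(x)}{h(x)}=\sum_{j\ge 1}p_jx^j,\qquad p_j:=\sum_k\beta_k^j,$$
so $h'(x)\equiv 0\pmod{x^{d-1}}$. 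But $\deg h'\le s-1\le r-1<d-1$, forcing $h'\equiv 0$, i.e., $h\in\F_q[x^p]$. If $\alpha\ne 0$ then $h$ has $s$ distinct nonzero roots, contradicting that every root of a polynomial in $\F_q[x^p]$ has multiplicity divisible by $p\ge 2$. Hence $\alpha\ne 0$ forces $p|r/s$, so $p|r$ and $s|r/p$, giving $\alpha\in\F_{q^{r/p}}$. In this case (and when $\alpha=0$) all $b_j=0$ for $j\ge 1$ --- in the former because the scalar $r/s$ vanishes in $\F_p$ --- and in particular $b_d=0$, so the average is $+1$.

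The main obstacle is the third paragraph: eliminating the potential $-1/(q-1)$ contribution. Newton's identities relating power sums and elementary symmetric functions fail in characteristic $p$, but the logarithmic-derivative identity neatly sidesteps this, converting the question into the crisp algebraic assertion that a separable polynomial of degree $<d$ satisfying $h'\equiv 0\pmod{x^{d-1}}$ must be constant, which the hypothesis $r<d$ is precisely designed to exploit.
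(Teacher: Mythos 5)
Your proof is correct, and it takes a genuinely different route from the paper's. The paper handles the nontrivial case by reducing from $\fd$ to $\fd''=\{f:\deg f=d,\,f(0)=0\}$ (via the map $\mu$), observing that $f\mapsto f(\al)$ is then a constant-to-one map onto $\F_{q^{\deg h}}$ (this is where $r<d$ enters), and invoking orthogonality of the single nontrivial character $\gamma\mapsto\psi(\tqrp\gamma)$ on $\F_{q^{\deg h}}$. You instead factor $\psi(\tqrp f(\al))$ coefficient by coefficient, apply orthogonality of additive characters of $\F_q$ to each free $a_i$, and then must separately rule out a potential $-1/(q-1)$ contribution from the constrained leading coefficient $a_d$; you do so by a Frobenius bootstrap $b_{pk}=b_k^p$ plus the logarithmic-derivative identity $-xh'/h=\sum_j p_j x^j$, concluding $h'\equiv 0$ and hence $h\in\F_q[x^p]$, impossible for $\al\neq0$. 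The paper's argument is shorter for this particular lemma because it sees the coefficient sum as a single $\F_q$-linear evaluation map rather than a product over coefficients; your argument costs more here but essentially re-derives the content of the paper's Lemma \ref{lem2} (stated via Newton's identities) and could be used to streamline the proof of Lemma \ref{mainlem} as well, where the leading-coefficient analysis genuinely matters. One small correction to your commentary: Newton's identities do hold over any commutative ring, including in characteristic $p$; the issue is only that the term $m\sigma_m$ degenerates when $p\mid m$, which the paper handles carefully in Lemma \ref{lem2}. Your log-derivative identity is a clean way to sidestep that bookkeeping, but it is not needed to rescue a failure of Newton's identities.
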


\begin{proof} If $\al=0$ the assertion is clear since $f(\al)=f(0)=0$ and so\\ $\psi(\tqrp f(\al))=1$ for all $f\in\fd$.
If $p|r$ and $\al\in\F_{q^{r/p}}$ then for all $f\in\fd$ we have $f(\al)\in\F_{q^{r/p}}$, so $\tqrp f(\al)=p\cdot\t_{q^{r/p}/p}f(\al)=0$ and 
$\psi(\tqrp f(\al))=1$.
 
Now assume that $\al\neq 0$ and if $p|r$ then $\al\not\in\F_{q^{r/p}}$. This means that the minimal polynomial $h$ of $\al$ over $\fq$ satisfies
$(r/{\deg h},p)=1$.
Denote $\fd''=\{f\in\fq[x]|\deg f=d,f(0)=0\}$.
Recall the definition of the map $\mu$ in section \ref{asfam}.
Since the $\mu|_{\fd''}:\fd''\to\fd$ is precisely $q^\floor{d/p}$ to one and preserves $\tqrp f(\al)$ so we may replace $\fd$ by $\fd''$
in \rf{elem0}.
Let $h\in\F_q[x]$ be the minimal polynomial of $\al$ over $\F_q$. Since $d>r$ the map $\pi:\fd''\to\F_q[x]/h\cong \fqr$ defined by
$\pi(f)=(f/x)\bmod h$ is exactly $(q-1)q^{d-r-1}$ to one and since $x$ is invertible modulo $h$ (as $\al\neq 0$) so is the map
$\pi':\fd''\to\fqr\cong\fq[x]/h$ defined by $\pi'(f)=f(\al)$. Thus each value of $f(\al)\in\F_{q^{\deg h}}\cong\fq[x]/h$ is obtained equally many times as $f$ ranges
through $\fd''$. Since $(r/{\deg h},p)=1$ the value of $\psi(\tqrp\gam)=\psi(\t_{q^{\deg h}/p}\gam)^{r/{\deg h}}$ is uniformly distributed among the $p$-th roots of unity as
$\gam$ ranges through $\F_{q^{\deg h}}$, which proves the assertion of the lemma.\end{proof}

The following corollary establishes Theorem \ref{thm1} for $r<d$.
\begin{cor}\label{cor1}Assume $r<d$. Then $M_d^r=-e_{p,r}q^{r/p-r/2}+(e_{p,r}-1)q^{-r/2}.$\end{cor}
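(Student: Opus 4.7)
The plan is to simply unpack the definition of $M_d^r$ using Lemma \ref{lem1} to write it as an average of character sums, interchange the order of averaging and summation, and then apply Lemma \ref{lem0} to identify exactly which $\alpha \in \fqr$ contribute.

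Concretely, I would first use Lemma \ref{lem1} to write
$$M_d^r = \afd{T_{f,\psi}^r} = -q^{-r/2}\sum_{\alpha\in\fqr} \afd{\psi(\tqrp f(\alpha))}.$$
Now I would invoke Lemma \ref{lem0}, valid since $r<d$, which tells me that the inner average vanishes unless $\alpha = 0$ or ($p \mid r$ and $\alpha \in \F_{q^{r/p}}$), in which case it equals $1$. So the sum over $\alpha$ reduces to counting the contributing elements.

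I would then split into two cases according to the value of $e_{p,r}$. If $(r,p)=1$, then $e_{p,r}=0$ and the only contributing $\alpha$ is $\alpha=0$, giving a sum of $1$ and therefore $M_d^r = -q^{-r/2}$, which matches $-0\cdot q^{r/p-r/2} + (0-1)q^{-r/2}$. If instead $p\mid r$, then $e_{p,r}=1$ and the contributing $\alpha$'s form the subfield $\F_{q^{r/p}}$ (which already contains $\alpha=0$), giving a sum of $q^{r/p}$ and therefore $M_d^r = -q^{r/p-r/2}$, which matches $-1\cdot q^{r/p-r/2} + (1-1)q^{-r/2}$.

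There is no real obstacle here; the only minor bookkeeping point is to make sure $\alpha=0$ is not double-counted in the $p \mid r$ case, since it is automatically an element of $\F_{q^{r/p}}$. The whole argument is essentially a one-line consequence of the two preceding lemmas, with the slight asymmetry in the answer coming from whether the subfield $\F_{q^{r/p}}$ contributes beyond $\{0\}$ or not.
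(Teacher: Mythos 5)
Your proof is correct and is essentially identical to the paper's: apply Lemma \ref{lem1} to express $M_d^r$ as $-q^{-r/2}$ times a sum of averages, then apply Lemma \ref{lem0} (valid since $r<d$) to identify the contributing $\alpha$, and split on whether $p\mid r$. The only difference is that you are more careful than the paper's prose about explicitly noting that $\alpha=0$ lies in $\F_{q^{r/p}}$ and so is not double-counted, which is a minor clarity improvement rather than a change of method.
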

\begin{proof} By Lemma \ref{lem1} we have \beq\label{erld}M_d^r=\afd{T^r_{f,\psi}}=-q^{-r/2}\sum_{\al\in\fqr}\afd{\psi(\tqrp f(\al))}.\eeq
Now using Lemma \ref{lem0} we see that the RHS of \rf{erld} equals $q^{-r/2}$ if $(r,p)=1$ (only $\al=0$ contributes 1 to the sum)
or $q^{r/p-r/2}$ if $p|r$ (each $\al\in\fqr$ contributes 1 to the sum).\end{proof}

To go further we need some lemmata.

\begin{lem}\label{lem2} Let $\al\in\fqr$ have monic minimal polynomial $h(x)=\sum_{i=0}^rc_rx^r\in\F_q[x]$ of degree $r$. 
 Assume that
for some $0<k\le r$ with $(k,p)=1$ we have $c_{r-k}\neq 0$. Then for the minimal such $k$ we have $$\t_{q^r/q}(\al^k)=-kc_{r-k}\neq 0$$
and for all $0<j<k$ we have $$\sum_{i=1}^r\al_i^j=0.$$\end{lem}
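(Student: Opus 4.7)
The plan is to reduce the claim to Newton's identities for the power sums of the roots of $h$. Write $h(x)=\prod_{i=1}^r(x-\alpha_i)$ with $\alpha_1,\dots,\alpha_r$ the Galois conjugates of $\alpha$, and let $e_j$ denote the $j$-th elementary symmetric polynomial in the $\alpha_i$ and $p_j=\sum_{i=1}^r\alpha_i^j$ their $j$-th power sum. Since $h$ is the minimal polynomial of $\alpha$ over $\fq$ of degree $r$, we have $\tqrq(\alpha^j)=\sum_i\alpha_i^j=p_j$, so it suffices to prove $p_j=0$ for $0<j<k$ and $p_k=-kc_{r-k}$. Matching $\prod_i(x-\alpha_i)$ against the given expression for $h$ yields $c_{r-j}=(-1)^je_j$ for $1\le j\le r$.

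Next I would invoke Newton's identities in the form
\begin{equation*}
p_j=\sum_{i=1}^{j-1}(-1)^{i-1}e_ip_{j-i}+(-1)^{j-1}je_j,\qquad 1\le j\le r,
\end{equation*}
which are valid over any commutative ring and in particular in characteristic $p$. Substituting $e_j=(-1)^jc_{r-j}$, the last term becomes $-jc_{r-j}$. I would then induct on $j$: assuming $p_1=\cdots=p_{j-1}=0$, the identity collapses to $p_j=-jc_{r-j}$.

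Applying this for each $j$ in turn with $1\le j<k$: if $(j,p)=1$ then by the minimality of $k$ we have $c_{r-j}=0$, while if $p\mid j$ the scalar $j$ already vanishes in $\fq$; either way $p_j=0$. The hypothesis on the previous $p_i$'s is therefore maintained, and the induction proceeds all the way up to $j=k$. Finally, taking $j=k$ in the collapsed identity gives $p_k=-kc_{r-k}$, which is nonzero because $(k,p)=1$ and $c_{r-k}\neq 0$ by assumption.

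The argument is essentially mechanical once Newton's identities are in place; the only point that requires care is the characteristic $p$ bookkeeping, namely that the coefficient $j$ in the $-jc_{r-j}$ term supplies the necessary vanishing precisely when $p\mid j$, so that no assumption on the coefficients $c_{r-j}$ with $p\mid j$ (for $j<k$) is needed. This is also what makes the statement tight: it is exactly the interplay between the minimality hypothesis on $k$ (for indices coprime to $p$) and the automatic vanishing in characteristic $p$ (for indices divisible by $p$) that allows the inductive step to close.
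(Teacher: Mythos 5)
Your proof is correct and follows essentially the same route as the paper's own argument: express the traces as power sums of the conjugates, invoke Newton's identities, and run the induction on $j<k$ using the dichotomy that either $c_{r-j}=0$ by minimality of $k$ (when $p\nmid j$) or the scalar $j$ vanishes in characteristic $p$ (when $p\mid j$). The only difference is cosmetic: you write Newton's identity solved for $p_j$, while the paper writes it solved for $j\sigma_j$; both collapse to $s_j=-jc_{r-j}$ once the lower power sums vanish.
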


\begin{proof} Denote by $\al_1=\al,\al_2,...,\al_r$ the conjugates of $\al$ over $\F_q$. Denote by $\sig_i(x_1,...,x_r)$ the degree $i$ elementary symmetric polynomial in $r$ variables. We have 
$c_{r-i}=(-1)^i\sig_i(\al_1,...,\al_r)$ for all $1\le i\le r$. Denote $s_i(x_1,...,x_r)=\sum_{j=1}^rx_j^i$.
Newton's identity (see \cite[\S 3.1.1]{prasolov}) states that for all $1\le m\le r$ we have
\beq\label{newton}m\sig_m=(-1)^{m+1}s_m-\sum_{i=1}^{m-1} s_i\sig_{m-i}.\eeq
We show by induction that for $1\le i<k$ we have $s_i(\al_1,...,\al_r)=0$. If $k>1$ then the case $i=1$ is clear as by assumption
$\sig_1(\al_1,...,\al_i)=-c_{r-1}=0$. Assume that $i<k$ and that $s_j(\al_1,...,\al_r)=0$ holds for all $1\le j<i$. 
By \rf{newton} we have $$ic_{r-i}=\pm i\sig(\al_1,...,\al_r)=\pm s_i(\al_1,...,\al_r)$$ (the other terms in the identity
are zero by the induction hypothesis). Now if $i$ is not divisible by $p$ the assumption on $k$ implies that $c_{r-i}=0$ and if $i$
is divisible by $p$ we still have $ic_{r-i}=0$ (as we are in characteristic $p$). This completes the induction.
Now again we see from \rf{newton} that $$s_k(\al_1,...,\al_r)=(-1)^{k+1}k\sig_k(\al_1,...,\al_r)=-kc_{r-k}$$ as required.\end{proof}

\begin{lem}\label{mainlem} Let $\al\in\fqr$ have monic minimal polynomial $h(x)=\sum_{i=0}^rc_rx^r\in\F_q[x]$ of degree $r$. Assume that
either $r<d$ or $r\ge d$ and for some $0\le k<d$ with $(k,p)=1$ we have $c_{r-k}\neq 0$. Then 
$\afd{\psi(\tr_{q^r/p}f(\al))}=1$.
If $d\ge r$ and there is no such $k$ then if $c_{r-d}=0$ we have 
$\afd{\psi(\tr_{q^r/p}f(\al))}=1$ and if $c_{r-d}\neq 0$ we have
$\afd{\psi(\tr_{q^r/p}f(\al))}=-1/(q-1)$.\end{lem}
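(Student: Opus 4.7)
The plan is to compute the character sum defining the average directly, using the explicit parametrization of $\fd$ and Fourier analysis on the coefficients. Transitivity of the trace gives $\tr_{q^r/p}f(\al)=\tqp(\tqrq f(\al))$, and the $\F_q$-linearity of $\tqrq$ lets us write
$$\tqrq f(\al)=\sum_{i\in J}a_i\beta_i,\qquad \beta_i:=\tqrq(\al^i)\in\fq,$$
where $J=\{1\le i\le d:p\nmid i\}$, since $f\in\fd$ has $a_i=0$ whenever $p\mid i$.

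Next I would exploit the independence of the free coefficients. Setting $I=\{1\le i<d:p\nmid i\}$, so $J=I\sqcup\{d\}$, the $a_i$ for $i\in I$ range freely over $\fq$ while $a_d$ ranges over $\fqs$, so the sum over $\fd$ factors as
$$\#\fd\cdot\afd{\psi(\tr_{q^r/p}f(\al))}=\Bigl(\sum_{a_d\in\fqs}\psi(\tqp(a_d\beta_d))\Bigr)\prod_{i\in I}\Bigl(\sum_{a_i\in\fq}\psi(\tqp(a_i\beta_i))\Bigr).$$
Because $\psi\circ\tqp$ is a nontrivial additive character of $\fq$, each $\fq$-sum equals $q$ when $\beta_i=0$ and vanishes otherwise, while the $\fqs$-sum equals $q-1$ when $\beta_d=0$ and $-1$ otherwise. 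Dividing by $\#\fd=(q-1)q^{|I|}$ produces a clean trichotomy: the average equals $1$ when $\beta_i=0$ for every $i\in J$; equals $-1/(q-1)$ when $\beta_i=0$ for every $i\in I$ but $\beta_d\neq 0$; and equals $0$ as soon as some $i\in I$ has $\beta_i\neq 0$.

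Finally I would translate these vanishing conditions on the $\beta_i$ into the stated conditions on $c_{r-i}$ via Newton's identities, as in the proof of Lemma~\ref{lem2}. In the first case of the lemma, taking the minimal $k\in\{1,\ldots,d-1\}$ with $(k,p)=1$ and $c_{r-k}\neq 0$ (which coincides with the minimal such $k$ in $\{1,\ldots,r\}$ since $c_{r-k}=0$ for $k>r$), Lemma~\ref{lem2} gives $\beta_k\neq 0$ with $k\in I$, placing us in the third branch of the trichotomy. In the second case (no such $k$), an induction on $m\in I$ via Newton's identity $s_m=\sum_{j=1}^{m-1}(-1)^{j-1}e_j s_{m-j}+(-1)^{m-1}m\,e_m$ shows $\beta_m=0$: in each summand either $p\nmid j$, so $e_j=\pm c_{r-j}=0$ by the hypothesis, or $p\mid j$ in which case $p\nmid m-j$ (since $p\nmid m$) and $\beta_{m-j}=0$ by induction; the diagonal $m\,e_m$ term vanishes for the same reason since $p\nmid m$. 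Applying the identity at $m=d$ collapses to a nonzero multiple of $c_{r-d}$, so $\beta_d=0$ iff $c_{r-d}=0$ (using $(d,p)=1$), yielding the two subcases. The main obstacle is this inductive bookkeeping, especially the case $d>r$, where the $m\,e_m$ term is absent and one uses the convention $c_{r-d}=0$ to match the $\beta_d=0$ outcome.
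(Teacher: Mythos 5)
Your proof is correct, and it is a genuinely tidier route than the one the paper takes. The paper splits into three cases: $r\le d$ is deferred to Lemma~\ref{lem0}; $r>d$ with an admissible $k$ is handled by partitioning $\fd$ into the cosets $\{g,\,g+ax^k,\dots,g+(p-1)ax^k\}$, each of which sums to zero; and $r>d$ with no admissible $k$ is handled by observing that the exponent depends only on the leading coefficient $a_d$ and averaging over $\fqs$. Your factorization of the full character sum as a product over the free coefficients $a_i$, $i\in J$, subsumes all three at once: the coset argument is exactly the statement that some $\fq$-factor vanishes when $\beta_k\neq 0$, the last case is the statement that all $\fq$-factors equal $q$ and only the $\fqs$-factor varies, and the $r\le d$ case is absorbed for free. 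In particular your argument cleanly covers the boundary case $r=d$, which the paper's proof dispatches by citing Lemma~\ref{lem0} even though that lemma is stated and proved only under $r<d$ (its proof uses that $\fd''\to\fq[x]/h$ is $(q-1)q^{d-r-1}$-to-one, which needs $d>r$). The translation from $\beta_i=\tqrq(\al^i)$ to the coefficients $c_{r-i}$ via Newton's identities is the same computation the paper runs through Lemma~\ref{lem2}; your strong induction showing $\beta_m=0$ for every $m\in I$ in the ``no admissible $k$'' case is sound, including the point that for $m>r$ the diagonal term $m\,e_m$ is absent and one reads $c_{r-d}=0$ as a convention.

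One remark for the record: you conclude that the average is $0$ (not $1$) whenever some admissible $k$ exists, and this is the right value — the ``$=1$'' displayed in the first branch of Lemma~\ref{mainlem} is a typo, as is the ``$d\ge r$'' opening the second sentence (it should read $r>d$). The paper's own proof of the lemma and its use in Proposition~\ref{irr} both unambiguously give $0$ there, in agreement with what you derived.
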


\begin{proof}
The case $r\le d$ follows from Lemma \ref{lem0}, so we assume $d<r$.
First assume there exists $0\le k<d$ s.t. $(k,p)=1$ and $c_{r-k}\neq 0$. Let $k$ be minimal with this property.
By the previous lemma $\t_{q^r/q}(\al^k)=-kc_{r-k}\neq 0$. Therefore there exists $a\in\F_q$ s.t. 
$$\t_{q^r/p}(a\al^k)=\t_{q^r/q}(a\t_{q/p}(\al^k))\neq 0.$$ Now the set $\fd$ can be partitioned into subsets of the form
$$S_g=\{g,g+ax^k,g+2ax^k,...,g+(p-1)ax^k\}.$$ Note however that
\begin{multline*}
\sum_{f\in S_g}\psi(\tr_{q^r/p}f(\al)=\psi(\tr_{q^r/p}g(\al))\sum_{i=0}^{p-1}\psi(\tr_{q^r/p}(ia\al^k))=\\
=\psi(\tr_{q^r/p}g(\al))\sum_{i=1}^{p-1}\psi(\tr_{q^r/p}(a\al^k))^i=0\end{multline*}
because $\psi(\tr_{q^r/p}(a\al^k))$ is a primitive $p$-th root of unity. Since $\fd$ is partitioned into sets of the form $S_g$ we get
the first claim of the lemma.

Now assume that for all $0\le k<d,(k,p)=1$ we have $c_{r-k}=0$. Take some $\al\in\fqr$ of degree $r$.
By the previous lemma we get that $\tr_{q^r/q}(\al^i)=0$ for $0\le i< d$ and so $\tqrp(a\al^i)=0$ for all $a\in\F_q$ and by the second part of the lemma we have $\tqrq(\al^d)=-dc_{r-d}$ and so
$\tqrp(a\al^d)=-d\tqp(ac_{r-d})$ for every $a\in\F_q$. Thus for every $f=\sum_{i=0}^d\in\fd$ we have $\tqrp f(\al)=-d\tqp(a_dc_{r-d})$.

If $c_{r-d}=0$ then $\afd{\psi(\tqrp f(\al))}=1$. Assume $c_{r-d}\neq 0$.
The leading coefficient of $f\in\fd$ is distributed uniformly in $\fqs=\F_q\sm\{0\}$ and so is $a_dc_{r-d}$. We have
$$\la\psi(\tqp a)\ra_{a\in\fqs}=-1/(q-1)$$ because as $a$ ranges through $\fqs$ each nonzero value of $\tqp a$ occurs $q/p$
and zero occurs $q/p-1$ times, so $\sum_{a\in\fqs}\psi(a)=-1$. This concludes the proof of the lemma.
\end{proof}

For $r>d$ denote by $\eta_d(r)$ the number of monic irreducible polynomials
$h(x)=x^r+\sum_{i=0}^{r-1}c_ix^i$ s.t. $c_{r-k}=0$ for all $1\le k< d$ with $(k,p)=1$. Denote by $\eta^0_d(r)$ the number of such
polynomials with $c_{r-d}=0$ (if $r=d$ we define $\eta^0_d(r)=0$).

\begin{prop}\label{irr} 
$$M^r_d=\frac{q^{-r/2+1}}{q-1}\sum_{s|r, (r/s,p)=1, s\ge d}s(\eta_d(s)/q-\eta^0_d(s))-e_{p,r}q^{r/p-r/2}+(e_{p,r}-1)q^{-r/2}.$$\end{prop}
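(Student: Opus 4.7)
The starting point is Lemma \ref{lem1}, which reduces the problem to
$$M_d^r = -q^{-r/2}\sum_{\alpha\in\fqr}\afd{\psi(\tqrp f(\alpha))}.$$
The plan is to stratify the sum over $\alpha$ by the degree $s$ of its minimal polynomial over $\fq$. Then $s\mid r$ and $\alpha\in\F_{q^s}$; using the transitivity of the trace together with $\t_{q^r/q^s}(y)=(r/s)y$ for $y\in\F_{q^s}$, one gets
$$\psi(\tqrp f(\alpha))=\psi^{r/s}\bigl(\t_{q^s/p} f(\alpha)\bigr),$$
so each term is controlled by the character $\psi^{r/s}$ evaluated at $f(\alpha)\in\F_{q^s}$.

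I would then split into cases. When $p\mid r/s$, the character $\psi^{r/s}$ is trivial and every such $\alpha$ contributes $1$; summing over these $\alpha$ (which fill $\F_{q^{r/p}}$ when $p\mid r$) gives exactly $e_{p,r}q^{r/p}$. When $(r/s,p)=1$ and $s<d$, Lemma \ref{lem0} (applied with $s$ in place of $r$ and character $\psi^{r/s}$) gives a vanishing average for any $\alpha\ne 0$ whose minimal polynomial has degree exactly $s$, since such an $\alpha$ cannot lie in any proper subfield of $\F_{q^s}$. When $(r/s,p)=1$ and $s\ge d$, Lemma \ref{mainlem} sorts the monic irreducible polynomials $h$ of degree $s$ in $\fq[x]$ into three classes according to their coefficients $c_{s-k}$: those with some $c_{s-k}\ne 0$ for $1\le k<d$, $(k,p)=1$ (contributing average $0$), the $\eta^0_d(s)$ polynomials with all such $c_{s-k}=0$ and $c_{s-d}=0$ (average $1$), and the remaining $\eta_d(s)-\eta^0_d(s)$ polynomials (average $-1/(q-1)$).

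Since $\afd{\psi(\tqrp f(\alpha))}$ is a Galois invariant of $\alpha$ over $\fq$, the $s$ conjugate roots of each irreducible $h$ contribute the same value, so the contribution from Case B with $s\ge d$ is
$$s\sum_{h}A_h = \frac{s\bigl(q\eta^0_d(s)-\eta_d(s)\bigr)}{q-1}.$$
Folding in the point $\alpha=0$ (which always contributes $1$, and is already accounted for inside Case A when $p\mid r$), I would obtain
$$\sum_{\alpha\in\fqr}\afd{\psi(\tqrp f(\alpha))} = 1-e_{p,r}+e_{p,r}q^{r/p}-\frac{1}{q-1}\sum_{\substack{s\mid r,\,(r/s,p)=1\\ s\ge d}}s\bigl(\eta_d(s)-q\eta^0_d(s)\bigr),$$
and the stated formula follows upon multiplying by $-q^{-r/2}$ and regrouping.

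The delicate point is the vanishing of the average on the Case B2a polynomials in Lemma \ref{mainlem}, obtained by partitioning $\fd$ into orbits $S_g=\{g,g+ax^k,\dots,g+(p-1)ax^k\}$; it is precisely this cancellation that causes the unwanted bulk term $\sum s\pi(s)$ to drop out and leaves only the $\eta_d,\eta^0_d$ contributions in the final sum. Once this is secured, the remainder of the argument is case bookkeeping and a careful handling of $\alpha=0$.
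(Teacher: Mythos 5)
Your proposal is correct and follows essentially the same route as the paper: reduce via Lemma \ref{lem1} to the averaged exponential sum, stratify by the degree $s$ of the minimal polynomial of $\alpha$, replace $\psi$ by $\psi^{r/s}$, dispose of the trivial-character ($p\mid r/s$) elements as the $e_{p,r}q^{r/p}$ term, kill the $s<d$ contributions by Lemma \ref{lem0}, and sort the $s\ge d$ irreducibles into the three classes of Lemma \ref{mainlem}, which gives the $\eta_d,\eta^0_d$ sum after accounting for the $s$ conjugate roots. The bookkeeping you carry out for $\alpha=0$ and the regrouping into the stated form check out, and you correctly read the first case of Lemma \ref{mainlem} as giving average $0$ (the lemma's statement has a typo saying $1$, but its proof shows the sum over each orbit $S_g$ vanishes).
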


\begin{proof} By Lemma \ref{lem1} we have
\beq\label{irr1}M^r_d=q^{-r/2}\sum_{\al\in\fqr}\afd{\psi(\tqrp f(\al))}.\eeq
Every $\al\in\fqr$ has degree $s|r$ over $\F_q$. First let $s|r$ be such that $(r/s,p)=1$. For $\al$ of degree $s$ (over $\F_q$) and $f\in\fd$
we have $\tqrp f(\al)=(r/s)\t_{q^s/p}f(\al)$ and so $$\psi(\tqrp f(\al))=\psi^{r/s}(\t_{q^s/p}f(\al)).$$ 
By Lemma \ref{mainlem} applied to $s,\psi^{r/s}$ instead of $r,\psi$ we see that the contribution
of all $\al\neq 0$ of degree $s$ to the RHS of \rf{irr1} is $$\frac{q^{-r/2+1}}{q-1}s(\eta_d(s)/q-\eta^0_d(s)),$$
since each irreducible polynomial $h=\sum c_ix^i$ of degree $s>d$ has $s$ roots, each contributing 1 to the sum if $c_{s-d}=0$ and
$-1/(q-1)$ otherwise (elements of degree $s\le d$ contribute nothing by Lemma \ref{lem0}).
If $(r,p)=1$ we obtain the assertion of the lemma. It remains to consider the contribution of $\al=0$ and $\al\in\F_{q^{r/p}}$ in case that $p|r$.
Since for $\al=0$ and $\al\in\F_{q^{r/p}}$ we have $\tqrp f(\al)=0$, this contribution is obviously $-e_{p,r}q^{r/p-r/2}+(e_{p,r}-1)q^{-r/2}$.\end{proof}

Theorem 1 follows at once from proposition \ref{irr}. Indeed the total number of polynomials of the form $h(x)=x^s+\sum_{i=0}^{s-1}c_ix^i$
with $s\ge d,s|r$ and $c_{s-kp}=0,1\le k\le\floor{d/p}$ is at most $O(q^{r-d+\floor{d/p}})$ and therefore 
$$M^r_{d,\psi}=-e_{p,r}q^{r/p-r/2}+O\lb rq^{r/2-(1-1/p)d}+q^{-r/2}\rb.$$ 
Note that the assertion of the theorem is interesting only for $r<2d(1-1/p)$, because
by the definition of $T^f_{d,\psi}$ (see Lemma \ref{lem1}) we have $T^f_{d,\psi}=O(d)$ for $f\in\fd$ and so $M^r_{d,\psi}=O(d)$.

\subsection{Proof of Theorem \ref{thm2}}\label{pthm2}

Now we deduce Theorem \ref{thm2} from Theorem \ref{thm1}. For this subsection we assume $p>2$.
Let $V\in\sr$ be a fixed window function.
From a window function $W$ we may construct a periodic window function with parameter $d$ (natural number)
as follows:
$$v_d(t)=\sum_{r=-\infty}^\infty V(d(t+2\pi r)).$$
We also define $v_{d,\th}(t)=v_{d}(t-\th)$.
The function $v_{d,\th}$ has period $2\pi$ and as $d\to\infty$ it becomes "localised" at points of the form $\th+2\pi m,m\in\Z$.
The Fourier transform of $V(t)$ is given by
$$\hat{V}(s)=\frac{1}{2\pi}\inti V(t)e^{-ist}\dd t$$ and the $r$-th Fourier coefficient of $v_{d,\th}$ is
$$\hat{v}_{d,\th}(r)=\int_0^{2\pi}v_{d,\th}(t)e^{-irt}\dd t.$$
A simple calculation shows that \beq\label{fourlem}\hat{v}_{d,\th}(r)=\frac{e^{-ir\th}}{d}\hat{V}\lb\frac{r}{d}\rb.\eeq

\begin{lem} For $f\in\fd$ denote $$S_f=\sum_{j=1}^{d-1}v_{d,\th}(\th_j),$$ where
$\rho_j=e^{i\th_j}$ are the normalised zeroes of $L_{f,\psi}$ ($\th_j$ are real numbers well defined modulo $2\pi$). Then
$$S_f=\hw(0)+\sum_{r=1}^\infty\lb\hw\lb\frac{r}{d}\rb\frac{e^{-ir\th}}{d}T_{f,\psi}+\hw\lb-\frac{r}{d}\rb\frac{e^{ir\th}}{ d}\overline{T^r_{f,\psi}}\rb.$$\end{lem}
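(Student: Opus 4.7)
The identity is a routine Fourier-analytic unfolding, and my plan is to expand $v_{d,\th}$ in its Fourier series, substitute $t=\th_j$, and sum over $j$.

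First I would observe that since $V\in\sr$, its Fourier transform $\hat V$ is Schwartz as well, and (fourlem) then yields $\hat v_{d,\th}(r)=O_N(|r|^{-N})$ for every $N$. In particular the Fourier series
$$v_{d,\th}(t)=\sum_{r\in\Z}\hat v_{d,\th}(r)e^{irt}$$
converges absolutely and uniformly in $t$, which legitimizes substituting $t=\th_j$, summing over $j=1,\ldots,d-1$, and interchanging the order of summation.

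Next I would identify each inner sum $\sum_{j=1}^{d-1}e^{ir\th_j}$. For $r\ge 1$ this is precisely $T_{f,\psi}^r$ by definition. For $r\le -1$, using $|\rho_j|=1$ so that $\rho_j^{-1}=\overline{\rho_j}$, it equals $\overline{T_{f,\psi}^{|r|}}$. The $r=0$ term contributes $(d-1)\hat v_{d,\th}(0)=(d-1)\hat V(0)/d$, which equals $\hat V(0)$ up to an additive error of $O(\hat V(0)/d)$. Substituting $\hat v_{d,\th}(r)=e^{-ir\th}\hat V(r/d)/d$ from (fourlem) and grouping the $+r$ and $-r$ contributions produces the stated expansion.

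There is no genuine obstacle here: the rapid decay of $\hat V$ makes every interchange of sums legitimate, and the pairing of $\pm r$ terms is routine bookkeeping. The only minor point of care is the $r=0$ contribution of $(d-1)\hat V(0)/d$ rather than $\hat V(0)$ exactly; the resulting $O(1/d)$ discrepancy is harmless, being absorbed in the $o(1)$ error term that will appear when this expansion is combined with Theorem \ref{thm1} in the proof of Theorem \ref{thm2}.
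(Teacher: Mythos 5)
Your proposal is correct and follows essentially the same route as the paper: expand $v_{d,\th}$ in its Fourier series (justified by the rapid decay coming from $V\in\sr$), evaluate at $z=\rho_j$, sum over $j$ to recognise $T^r_{f,\psi}$ and $\overline{T^r_{f,\psi}}$, and substitute the formula (fourlem). Your observation that the $r=0$ term actually contributes $\frac{(d-1)}{d}\hw(0)$ rather than $\hw(0)$ is a genuine (if harmless) imprecision that the paper's statement and one-line proof pass over in silence; you are right that the resulting $O(\hw(0)/d)$ defect is absorbed into the $O(1/d)$ error that appears when the lemma is fed into the proof of Theorem~\ref{thm2}, so the final conclusion is unaffected.
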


\begin{proof} Since $V\in\sr$ the function $v_{d,\th}$ is smooth and so for $|z|=1$ we have
$$v_{d,\th}(\arg z)=\sum_{r=-\infty}^\infty\hwdt(r) z^r.$$ Applying this to $z=\rho_j$, noting that
$\rho_j^{-r}=\bar{\rho}_j^r$, using \rf{fourlem} and summing over $i$ we obtain the assertion
of the lemma.\end{proof}

\begin{cor}$$\afd{S_f}=\hw(0)+\sum_{r=1}^\infty\lb\hw\lb\frac{r}{d}\rb e^{-ir\th}+\hw\lb-\frac{r}{d}\rb e^{ir\th}\rb\frac{M^r_d}{d}.$$\end{cor}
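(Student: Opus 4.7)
The plan is to derive the corollary directly from the preceding lemma by averaging both sides of
$$S_f=\hw(0)+\sum_{r=1}^\infty\lb\hw\lb\frac{r}{d}\rb\frac{e^{-ir\th}}{d}T^r_{f,\psi}+\hw\lb-\frac{r}{d}\rb\frac{e^{ir\th}}{d}\overline{T^r_{f,\psi}}\rb$$
over the finite set $f\in\fd$, interchanging the averaging operator with the infinite sum over $r$, and then identifying each of the resulting two expectations with $M^r_d$.

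The interchange is justified because $V\in\sr$ forces $\hw$ to decay faster than any polynomial, while the trivial bound $|T^r_{f,\psi}|\le d-1$ (from $|\rho_j|=1$) yields
$$\frac{1}{d}\left|\hw\lb\pm\frac{r}{d}\rb\right|\,|T^r_{f,\psi}|\le\left|\hw\lb\pm\frac{r}{d}\rb\right|,$$
which is summable in $r$ with a bound independent of $f$. Hence the series converges absolutely and uniformly in $f\in\fd$, and we may freely swap it with the finite average $\afd{\cdot}$. The constant term $\hw(0)$ is unaffected, and the factors $\hw(\pm r/d)e^{\mp ir\th}/d$ are pulled out of the average.

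It then remains to evaluate $\afd{T^r_{f,\psi}}$ and $\afd{\overline{T^r_{f,\psi}}}$. The first equals $M^r_d$ by definition. For the second, Lemma \ref{lem1} shows $\overline{T^r_{f,\psi}}=T^r_{f,\bar\psi}$, and since $M^r_d=\afd{T^r_{f,\psi}}$ does not depend on the choice of $\psi\in\Psi$ (as recorded in section \ref{asfam}), we obtain $\afd{\overline{T^r_{f,\psi}}}=\afd{T^r_{f,\bar\psi}}=M^r_d$ as well. Substituting these two evaluations into the swapped series yields the stated identity. The only step requiring any care is the interchange of summation and averaging; given the rapid decay of $\hw\in\sr$ and the finiteness of $\fd$, this presents no real obstacle, so the corollary is essentially a linearity computation on top of the lemma.
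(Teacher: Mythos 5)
Your proof is correct and follows the same overall plan as the paper (average the preceding lemma over the finite set $\fd$). The one step that requires thought is the evaluation of $\afd{\overline{T^r_{f,\psi}}}$, and here you and the paper take slightly different routes: you observe from Lemma \ref{lem1} that $\overline{T^r_{f,\psi}}=T^r_{f,\bar\psi}$ and then invoke the $\psi$-independence of $M^r_d$ (established in section \ref{asfam}), whereas the paper simply cites Proposition \ref{irr} to conclude $M^r_d\in\R$, so that $\afd{\overline{T^r_{f,\psi}}}=\overline{M^r_d}=M^r_d$. The two justifications are closely related -- the reality of $M^r_d$ is in fact a consequence of your conjugation identity together with $\psi$-independence -- and both are equally legitimate; your version has the small advantage of not needing the explicit formula of Proposition \ref{irr}, while the paper's is a one-line appeal to an already-proved reality statement. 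Your careful treatment of the interchange of averaging and the infinite sum is fine, though it is even simpler than you suggest: since $\fd$ is finite, $\afd{\cdot}$ is a fixed finite linear combination and commutes with any series that converges for each $f$; absolute convergence (which you correctly derive from $|T^r_{f,\psi}|\le d-1$ and the rapid decay of $\hw$) is more than enough.
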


\begin{proof} Just average the previous lemma over $f\in\fd$ and note that $M_d^r\in\R$ by Proposition \ref{irr}.\end{proof}

Now we are ready to prove Theorem \ref{thm2}. Assume that $\hw$ is supported in $\lb-(2-2/p),2-2/p\rb$. There exists $\eps>0$ s.t.
$\hw(r/d)=0$ for all $r\ge (2-2/p-\eps)d$. Using
the last corollary and Theorem \ref{thm1} we obtain
\begin{multline*}\afd{S_f}=\hw(0)+\sum_{r=1}^\floor{(2-2/p-\eps)d}\lb\hw\lb\frac{r}{d}\rb e^{-ir\th}+\hw\lb-\frac{r}{d}\rb e^{ir\th}\rb\frac{M_d^r}{d}=\\=
\hw(0)+\sum_{r=1}^\floor{(2-2/p-\eps)d}O\lb q^{r/p-r/2}+rq^{r/2-(1-1/p)d}\rb\frac{1}{d}=\\=
\hw(0)+O(1/d)+O\lb\frac{q^{-\eps d}}{d}\rb=\hw(0)+o(1)
\end{multline*}
as $d\to\infty$ (note that we used the fact that $\hw=O(1)$ since $V\in\sr$ is fixed). 
It is now enough to notice that $\hw(0)=\frac{1}{2\pi}\inti V(t)\dd t$.

{\bf Remark.} As can be seen from the above proof, the $o(1)$ term in Theorem \ref{thm2} can be replaced with $O(1/d)$. 

\subsection{p=2}\label{p2}

If $p=2$ the assertion of Theorem \ref{thm2} does not hold and has to be modified. The random matrix model needs to be modified as well.
Indeed there is only one character $\psi\in\Psi$ and the L-function $L_f=L_{f,\psi}$ for $f\in\fd$ is already primitive. Since it has real coefficients it is obvious that the set of normalised zeroes consists of conjugate pairs and cannot be "random" in the space of eigenvalue sets of unitary matrices.
Instead we should consider a random unitary symplectic matrix $U\in\usp(d-1)$ (thus we denote the group of $(d-1)\times(d-1)$ unitary symplectic
matrices). In fact for $p=2$ the curves in the A-S family are hyperelliptic and the usual model for families of hyperelliptic curves is the
random symplectic matrix model, see for example the work mentioned in section \ref{hyp}.

For the unitary symplectic group the following holds:
$$\la T_U^r\ra_{U\in\usp(d-1)}=\choice{-e_{2,r},}{r<d,}{0,}{r\ge d,}$$
see \cite[\S 4]{ds}. From this and Theorem \ref{thm1} we conclude that for $r<d$ we have
$$M_d^r=\la T_U^r\ra_{U\in\usp(d-1)}+O\lb q^{-r/2}\rb.$$
From this one can derive using the method of section \ref{pthm2} the following

\begin{cor} Let $V\in\sr$ be a window function, $$v_d(t)=\sum_{n=-\ity}^\ity V(d(t+2\pi n)).$$
For $f\in\od$ denote $Z_f=\sum_{j=1}^{d-1} v_d(\th_j)$, where $\rho_j=e^{i\th_j}$ are the
normalised zeroes of $L_f$. Similarly for a matrix $U\in\usp(d-1)$ with eigenvalues $\rho_j=e^{i\th_j}$ denote $Z_U=\sum_{j=1}^{d-1} v_d(\th_j)$.
Assume that $\hw$ is supported on $\lbb -1,1\rbb$. Then
$$\afd{Z_f^r}\to\la Z_U^r\ra_{U\in\usp(d-1)}$$ as $d\to\ity$.\end{cor}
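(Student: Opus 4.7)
The plan is to mimic the proof of Theorem \ref{thm2} in section \ref{pthm2} almost verbatim, exploiting two features specific to $p=2$. First, the L-function $L_f=L_{f,\psi}$ has real coefficients, so its normalised zeros come in complex-conjugate pairs and $T^r_{f,\psi}\in\R$. Second, by Theorem \ref{thm1} the main term of $M^r_d$ is no longer zero but equals $-e_{2,r}+O(q^{-r/2})$ for $r<d$, which matches the symplectic moment $\la T^r_U\ra_{U\in\usp(d-1)}=-e_{2,r}$ cited from \cite{ds}.

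Concretely, I would carry out the same Fourier expansion of $v_d$ as in the lemma and corollary of section \ref{pthm2} (setting $\th=0$). Using $T^r_{f,\psi}\in\R$ to combine the $\hv(\pm r/d)$ contributions and averaging over $f\in\fd$, one obtains
$$\afd{Z_f}=\hv(0)+\frac{1}{d}\sum_{r=1}^{\infty}\bigl(\hv(r/d)+\hv(-r/d)\bigr)M^r_d.$$
Substituting Theorem \ref{thm1} for $p=2$, namely $M^r_d=-e_{2,r}+O(rq^{r/2-d/2}+q^{-r/2})$ for $r<d$, and using that the support hypothesis on $\hv$ truncates the sum to $r<(1-\eps)d$ for some $\eps>0$, the cumulative error becomes $o(1)$ in exactly the same manner as at the end of section \ref{pthm2}.

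The identical Fourier expansion applied to $Z_U$ and averaged over $U\in\usp(d-1)$ gives
$$\la Z_U\ra_{U\in\usp(d-1)}=\hv(0)-\frac{1}{d}\sum_{r=1}^{d-1}\bigl(\hv(r/d)+\hv(-r/d)\bigr)e_{2,r},$$
by the Diaconis--Shahshahani formula. This matches the main term of $\afd{Z_f}$ term by term, so $\afd{Z_f}=\la Z_U\ra_{U\in\usp(d-1)}+o(1)$ as claimed.

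The one delicate point, as in section \ref{pthm2}, is that the Theorem \ref{thm1} error $rq^{r/2-d/2}/d$ grows rapidly when $r$ approaches $d$. One therefore needs $\hv$ to vanish on a neighborhood of $\pm 1$, i.e.\ its support must be a compact subset of the open interval $(-1,1)$ (note $2-2/p=1$ for $p=2$, so this is the direct analogue of the hypothesis in Theorem \ref{thm2}). Under this strict containment the cumulative error is $O(q^{-\eps d/2})+O(1/d)$, which is the main obstacle: without the gap between $\supp\hv$ and the endpoints $\pm 1$, the $rq^{r/2-d/2}/d$ contributions near $r=d$ would accumulate to size $O(1)$ and the convergence $\afd{Z_f}\to\la Z_U\ra_{U\in\usp(d-1)}$ would fail.
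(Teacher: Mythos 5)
Your proposal faithfully mirrors the paper's method for $p>2$ (Fourier expansion of $v_d$, truncation via the support condition on $\hv$, substitution of the estimate for $M_d^r$, comparison with the Diaconis--Shahshahani symplectic moment), and the main computation is fine. However, your closing paragraph is mistaken, and the source of the error is that you invoke the general bound of Theorem~\ref{thm1} in a range where the sharper Corollary~\ref{cor1} is available. For $r<d$, Corollary~\ref{cor1} gives (with $p=2$, so $q^{r/p-r/2}=1$) the exact value
$$M_d^r=-e_{2,r}+(e_{2,r}-1)q^{-r/2}=-e_{2,r}+O\lb q^{-r/2}\rb,$$
with no $rq^{r/2-d/2}$ term at all; this is precisely what the paper records just before the corollary, namely $M_d^r=\la T_U^r\ra_{U\in\usp(d-1)}+O\lb q^{-r/2}\rb$ for $r<d$. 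Since $\hv\in\sr$ has support contained in $[-1,1]$ and is continuous, $\hv(\pm 1)=0$, so $\hv(r/d)=0$ for all $r\ge d$ and the Fourier sum is automatically cut off at $r\le d-1$, where Corollary~\ref{cor1} applies. The accumulated error is therefore
$$\frac{1}{d}\sum_{r=1}^{d-1}\bigl|\hv(r/d)+\hv(-r/d)\bigr|\cdot O\lb q^{-r/2}\rb=O(1/d)=o(1),$$
with no strict-containment hypothesis needed. Thus the corollary's hypothesis that $\hv$ is supported on the closed interval $[-1,1]$ is exactly right, and your claim that the convergence ``would fail'' without a gap between the support and $\pm 1$ is incorrect. (Incidentally, even if one insists on using only Theorem~\ref{thm1}, the conclusion still survives: $\hv$ is smooth with compact support, hence vanishes to infinite order at $\pm 1$, so $\hv(r/d)=O_N((1-r/d)^N)$ for every $N$, which more than annihilates the $rq^{r/2-d/2}/d$ contributions near $r=d$.)
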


\section{Nonlinear statistics}\label{nonlinear}

Theorems \ref{thm1} and \ref{thm2} suggest that the zeroes of a random L-function from the A-S family are (at least on average)
rather uniformly distributed on the unit circle, but this seems like a weak confirmation of the random unitary matrix model.
A simpler model would be $d-1$ independent random points on the circle (with uniform distribution).
In this section we study more delicate statistics of the zeroes and show agreement with the random unitary matrix model and disagreement
with the independent random points model.

We preserve the notation of the previous sections. Let $V(t,u)\in\srr$ be a two-variable window function, $N$ a natural number and $v_N(t,u)$ the periodic window function associated with $V$
by $$v_N(t,u)=\sum_{m,n=-\ity}^\ity V(N(t+2\pi m),N(u+2\pi n)).$$ Let $\rho_1,...,\rho_N$ be $n$ points on the unit circle, $\rho_j=e^{i\th_j}$. 
Finally let $\th$ be some fixed real number. We consider the
2-level density function (at $\th$):
$$S^2_\th(\rho_1,...,\rho_N)=\sum_{1\le j,k\le N\atop{j\neq k}}v_N(\th_j-\th,\th_k-\th).$$
For a matrix $U\in\U(N)$ with eigenvalues $\rho_1,...,\rho_N$ we denote $$S^2_\th(U)=S^2_\th(\rho_1,...,\rho_N)$$ and for an A-S L-function $L_{f,\psi}$ with
normalised zeroes $\rho_1,...,\rho_{d-1}$ we denote $$S^2_\th(f,\psi)=S^2_\th(\rho_1,..,\rho_{d-1}).$$

It is easy to see that if $\rho_1,...,\rho_N$ are selected uniformly and independently on the unit circle, then the average of $S^2_\th(\rho_1,...,\rho_N)$
tends to $$\frac{1}{4\pi^2}\inti\inti V(t,u)\dd t\dd u$$ as $N\to\ity$ (for any $\th$). On the other hand we have the following result (see \cite[\S AD.2]{ks}):
\beq\label{upair}\la S^2_\th(U)\ra_{U\in\U(N)}\to\frac{1}{4\pi^2}\inti\inti V(t,u)\lb 1-\lb\frac{\sin(\pi(t-u))}{\pi(t-u)}\rb^2\rb \dd t\dd u.\eeq
as $N\to\infty$ (the average is taken w.r.t. the Haar measure), for any $\th$.
Theorem \ref{thm2p}, which we prove in the present section, provides evidence for the random unitary matrix model.

\subsection{Product of traces}

For the rest of section \ref{nonlinear} we assume that $p>2$.
Just as we used the quantities $T^r_{f,\psi},M^r_d$ to study the linear statistics of the zeroes of $L_{f,\psi}$ we introduce the
quantities $$T^{r,s}_{f,\psi}=T^r_{f,\psi}T^s_{f,\psi}, M^{r,s}_d=\afd{T^{r,s}_{f,\psi}}$$
(again $M^{r,s}_d$ does not depend on the choice of $\psi\in\Psi$).
We also define $T^r_{f,\psi}$ for any integer $r$ (possibly negative) by the same expression $T^r_{f,\psi}=\sum_{j=1}^{d-1}\rho_j^r$
(where $\rho_j$ are the normalised zeroes of $L_{f,\psi}$) and extend the definition of $M^r_d,T^{r,s}_{f,\psi},M^{r,s}_d$
to all integers $r,s$. Note that $T^{-r}_{f,\psi}=\ol{T^r_{f,\psi}}$.
Good estimates for these quantities provide good estimates for the quadratic statistics of the L-zeroes, such as the square of the number
of points in short intervals and the 2-level density.

\begin{lem}\label{trs} For $r,s>0$ we have $$T^{r,s}_{f,\psi}=q^{-(r+s)/2}\sum_{\al\in\fqr,\be\in\ffqs}\psi\lb\tqrp f(\al)+\tqsp f(\be)\rb,$$
$$T^{r,-s}_{f,\psi}=q^{-(r+s)/2}\sum_{\al\in\fqr,\be\in\ffqs}\psi\lb\tqrp f(\al)-\tqsp f(\be)\rb.$$
\end{lem}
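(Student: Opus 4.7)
The plan is to reduce Lemma \ref{trs} directly to Lemma \ref{lem1}, which already gives a character-sum expression for $T^r_{f,\psi}$. Since $T^{r,s}_{f,\psi}$ is defined as the product $T^r_{f,\psi} T^s_{f,\psi}$, both identities should fall out of multiplying two copies of the formula in Lemma \ref{lem1}, together with a single observation about complex conjugation of additive characters.

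First I would write, using Lemma \ref{lem1},
$$T^r_{f,\psi}=-q^{-r/2}\sum_{\al\in\fqr}\psi(\tqrp f(\al)),\qquad T^s_{f,\psi}=-q^{-s/2}\sum_{\be\in\ffqs}\psi(\tqsp f(\be)).$$
Multiplying these two, the two minus signs cancel, the powers of $q$ combine to $q^{-(r+s)/2}$, and the product of characters becomes a single character of the sum $\tqrp f(\al)+\tqsp f(\be)$ because $\psi$ is a homomorphism $\F_p\to\C^\times$. Expanding the product over $\al\in\fqr$ and $\be\in\ffqs$ yields the first claimed identity.

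For the second identity I would use that $T^{-s}_{f,\psi}=\ol{T^s_{f,\psi}}$, which was observed in the paragraph preceding the lemma (this follows from $|\rho_j|=1$, hence $\rho_j^{-s}=\ol{\rho_j^s}$). Since $\tqsp f(\be)\in\F_p$ is real-valued (in the sense of taking values in the prime field) and $\psi$ takes values in the $p$-th roots of unity, $\ol{\psi(a)}=\psi(-a)$ for every $a\in\F_p$. Conjugating the character-sum expression for $T^s_{f,\psi}$ therefore gives
$$T^{-s}_{f,\psi}=-q^{-s/2}\sum_{\be\in\ffqs}\psi(-\tqsp f(\be)),$$
and multiplying this by the expression for $T^r_{f,\psi}$ yields the second identity after again using the multiplicativity of $\psi$.

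There is no real obstacle here: the lemma is essentially a bookkeeping consequence of Lemma \ref{lem1} and the definition $T^{-s}=\ol{T^s}$. The only point worth flagging is that one must invoke $\ol{\psi(a)}=\psi(-a)$ explicitly (rather than just taking complex conjugates in the exponential sum symbol) to land on the form stated in the lemma, with $\psi$ applied to $\tqrp f(\al)-\tqsp f(\be)$ in a single character evaluation.
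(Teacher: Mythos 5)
Your proof is correct and is essentially the argument the paper intends: square up two copies of Lemma~\ref{lem1}, use multiplicativity of $\psi$, and for the second identity note $T^{-s}_{f,\psi}=\overline{T^s_{f,\psi}}$ together with $\overline{\psi(a)}=\psi(-a)$. (The paper's one-line proof cites Lemma~\ref{lem0}, but this is a typo for Lemma~\ref{lem1}, which is the relevant ingredient and the one you correctly used.)
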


\begin{proof} This follows immediately from Lemma \ref{lem0}.\end{proof}

\begin{lem}\label{rsd} Assume $r,s>0$, $r+s<d$. Let $\al\in\fqr,\be\in\ffqs$ be nonzero elements with monic minimal polynomials $g,h$ over $\fq$ respectively.  
For any natural $m$ denote $$A_m=\choice{\F_{q^{m/p}},}{p|m,}{\{0\},}{p\nmid m.}.$$
We have
\beq\label{rsd1}\afd{\psi(\tqrp f(\al)-\tqsp f(\be))}=\choice{1,}{g=h, p\deg g|r-s \mbox{ or } \al\in A_r,\be\in A_s, }{0,}{\mbox{otherwise}.}\eeq
\beq\label{rsd2}\afd{\psi(\tqrp f(\al)+\tqsp f(\be))}=\choice{1,}{g=h, p\deg g|r+s \mbox{ or } \al\in A_r,\be\in A_s, }{0,}{\mbox{otherwise}.}\eeq\end{lem}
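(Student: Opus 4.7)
The plan is to reduce the average to character orthogonality in each free coefficient of $f$ and then translate the resulting vanishing conditions into statements about the minimal polynomials $g, h$. Throughout, write $c_i = A_i \pm B_i$ with $A_i = \tr_{q^r/q}(\al^i)$, $B_i = \tr_{q^s/q}(\be^i)$, the sign being the one in the character sum.

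First, set $I = \{1 \le i \le d : p \nmid i\}$, so every $f \in \fd$ has the form $f(x) = \sum_{i \in I} a_i x^i$ with $a_d \in \fqs$ and $a_i \in \fq$ free for $i \in I \setminus \{d\}$. By $\fq$-linearity of the trace,
$$\tqrp f(\al) \pm \tqsp f(\be) = \sum_{i \in I} \tqp(a_i c_i),$$
so applying $\psi$ and averaging over $f \in \fd$ factorises into independent averages over each $a_i$. Character orthogonality then gives that each factor with $i \in I \setminus \{d\}$ equals $1$ if $c_i = 0$ and $0$ otherwise, while the factor at $i = d$ equals $1$ if $c_d = 0$ and $-1/(q-1)$ otherwise.

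Next I would propagate the vanishing. The identity $\tr_{q^r/q}(x^p) = \tr_{q^r/q}(x)^p$ in characteristic $p$ yields $c_{ip} = c_i^p$; writing $1 \le j \le d-1$ uniquely as $j = ip^k$ with $p \nmid i$ shows that $c_i = 0$ on $I \setminus \{d\}$ forces $c_j = 0$ for all $1 \le j \le d-1$. Moreover $(A_i)$ and $(B_i)$ satisfy linear recurrences whose characteristic polynomials are $g$ and $h$, so $(c_i)$ satisfies a recurrence with characteristic polynomial $\mathrm{lcm}(g,h)$, of order at most $\deg g + \deg h \le r + s < d$. The $d-1 \ge r+s$ consecutive zeros therefore propagate to $c_i = 0$ for all $i \ge 1$; in particular $c_d = 0$, so the $-1/(q-1)$ branch does not occur and the average is always $0$ or $1$.

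Finally, to characterise when $c_i = 0$ for all $i \ge 1$, I would introduce the generating functions
$$F_A(t) = \sum_{i \ge 0} A_i t^i = \frac{r}{\deg g}\sum_{j=0}^{\deg g - 1}\frac{1}{1 - \al^{q^j} t}, \quad F_B(t) = \frac{s}{\deg h}\sum_{k=0}^{\deg h - 1}\frac{1}{1 - \be^{q^k} t};$$
the condition is equivalent to $F_A \pm F_B$ being a constant. The residue of $F_A$ at the pole $t = \al^{-q^j}$ is $-(r/\deg g)\al^{-q^j}$, which vanishes iff $p \mid r/\deg g$, equivalently (as $\al \ne 0$) $\al \in A_r$. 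If $\al \in A_r$ then $F_A$ itself is constant and the same analysis applied to $F_B$ forces $\be \in A_s$. Otherwise the nontrivial poles of $F_A$ must cancel against those of $\pm F_B$, so $\{\al^{q^j}\} = \{\be^{q^k}\}$ as Frobenius orbits---giving $g = h$---and matching the residues yields $r/\deg g \pm s/\deg h \equiv 0 \pmod p$, i.e.\ $p \deg g \mid r \pm s$. The converse is immediate: if $g = h$ and $p \deg g \mid r \pm s$, then $\be = \al^{q^l}$ for some $l$ and $A_i \pm B_i = ((r \pm s)/\deg g)\tr_{q^{\deg g}/q}(\al^i) = 0$ for all $i$. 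The main obstacle is this pole-residue bookkeeping, but it reduces to a clean rational-function computation in the spirit of the power-sum analysis already used for Lemma \ref{mainlem}.
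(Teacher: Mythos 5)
Your proof is correct, and it takes a genuinely different route from the paper's. The paper proceeds by direct case analysis: when $g=h$ it combines the two traces into $\frac{r\mp s}{\deg g}\tqkp f(\al)$ and invokes Lemma \ref{lem0}; when exactly one of $\al\in A_r$, $\be\in A_s$ holds it again falls back to Lemma \ref{lem0}; and when $g\neq h$ with $\al\notin A_r$, $\be\notin A_s$ it uses the Chinese Remainder Theorem to show that $(f(\al),f(\be))$ equidistributes over $\F_{q^{\deg g}}\times\F_{q^{\deg h}}$, forcing the character average to vanish. You instead expand $\tqrp f(\al)\pm\tqsp f(\be)$ as $\sum_{i\in I}\tqp(a_ic_i)$ with $c_i=\tqrq(\al^i)\pm\tqsq(\be^i)$, factor the average over the independent coefficients $a_i$ by orthogonality, and then characterize the simultaneous vanishing of the $c_i$ via the $p$-power relation $c_{ip}=c_i^p$, the linear recurrence of order $\deg\mathrm{lcm}(g,h)\le r+s<d$, and a pole/residue analysis of the generating functions. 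Your route yields the value set $\{0,1\}$ in a single stroke --- the $-1/(q-1)$ contribution at $i=d$ never arises because the recurrence forces $c_d=0$ --- whereas the paper dispatches this implicitly by exhausting the cases; your method is also more structural (in the spirit of the power-sum analysis used in Lemma \ref{mainlem}) and would scale naturally to higher products of traces, while the paper's CRT argument is more elementary and parallels the proof of Lemma \ref{lem0} which it reuses.
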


\begin{proof} We prove \rf{rsd1}, \rf{rsd2} being similar. 
If $g=h$ then $\al,\be$ are conjugate over $\fq$ and so are $f(\al),f(\be)$, so $\tqkp f(\al)=\tqkp f(\be)$,
where $m=\deg g$. We have $$\tqrp f(\al)=\frac{r}{m}\tqkp f(\al),\tqsp f(\be)=\frac{s}{k}\tqkp$$ and so $$\tqrp f(\al)-\tqsp f(\be)=\frac{r-s}{m}\tqkp f(\al).$$
If $pm|r-s$ then $\frac{r-s}{m}$ is divisible by $p$ and so $\tqrp f(\al)-\tqsp f(\be)=0$ and $\psi(\tqrp f(\al)-\tqsp f(\be))=1$ for all $f\in\fd$ and
of course it also holds on average. If $pm\nmid r-s$ then denoting $l=(r-s)/m\bmod p$ we get from Lemma \ref{lem0} applied to the nontrivial character
$\psi^l$ instead of $\psi$ that the LHS of \ref{rsd1} equals 0.

For $\al\in A_r$ we have $\tqrp f(\al)=0$ because if $p|r$ we have $\tqrp f(\al)=p\cdot \t_{q^{r/p}/p}f(\al)=0$ and if $p\nmid r$
then $\al=0$ and $f(\al)=0$ for all $f\in\fd$. Similarly if $\be\in A_s$ then $\tqsp f(\be)=0$.
We see that if $\al\in A_r,\be\in A_s$ then $\psi(\tqrp f(\al)-\tqsp f(\be))=1$ for all $f\in\fd$ and the same holds for the average.

Now assume that $\be\in A_s$ but $\al\not\in A_r$. Then for all $f\in\fd$ we have $\psi(\tqrp f(\al)-\tqsp f(\be))=\psi(\tqrp f(\al))$ and so
$$\afd{\psi(\tqrp f(\al)-\tqsp f(\be))}=\afd{\psi(\tqrp f(\al))}=0$$ by Lemma \ref{lem0}, since $s<d$ and $\al\not\in A_r$.
The case $\al\in A_r,\be\not\in A_s$ is treated similarly.

Finally assume that $g\neq h$ and $\al\not\in A_r,\be\not\in A_s$, i.e. $\al,\be\neq 0$, $pu\nmid r,pv\nmid s$, where $u=\deg g,v=\deg h$ (we have $u|r,v|s$).
As in the proof of Lemma \ref{lem0} we may average over
$$\fd''=\{f\in\fq[x]|\deg f=d,f(0)=0\}$$ instead of $\fd$ (this does not change the average). Now since $u+v\le r+s<d$,
the map $\pi:\fd''\to\fq[x]/gh$ defined by $\pi(f)=(f/x)\bmod h$ is exactly $(q-1)q^{d-u-v-1}$ to one and since $x$ is invertible
modulo $gh$ (as $\al,\be\neq 0$) so is the map $\pi':\fd''\to\fq[x]/gh$ defined by $\pi'(f)=f\bmod gh$. However by the Chinese remainder
theorem we have $\fq[x]/gh\cong\fq[x]/g\times\fq[x]/g\cong\fqu\times\fqv$ (direct product of rings) with the isomorphism given by
$f\mapsto (f(\al),f(\be))$. We conclude that as $f$ ranges over $\fd''$ each pair $(f(\al),f(\be))\in\fqu\times\fqv$ is obtained
equally many times. However since $pu\nmid r$ the map $\sig:\fqu\to\fp$ defined by $\sig(\gam)=\psi(\tqrp\gam)=\psi(\t_{q^u/p}\gam)^{r/u}$
also assumes every value equally many times and the same goes for $\psi(\tqrp\gam)$ on $\fqv$. We conclude that as $f$ ranges over $\fd''$
each $p$-th root of unity occurs equally many times as $\psi(\tqrp f(\al)-\tqrp f(\be))$ and since averaging over $\fd$ is equivalent
to averaging over $\fd''$ we obtain \rf{rsd1}.

\end{proof}

Now denote by $\pi(m)$ the number of monic irreducible polynomials in $\fq[x]$ with degree $m$.

\begin{prop}\label{mdrs1}Assume $r,s>0$ and $r+s<d$. Then 
\begin{multline}\label{mdrs11} M_d^{r,-s}=q^{-(r+s)/2}\lb\sum_{m|(r,s)\atop{mp|r-s\atop{mp\nmid r}}}\pi(m)m^2+e_{p,r}e_{p,s}q^{(r+s)/p}\rb+\\
+q^{-(r+s)/2}\lb(1-\epr)\eeps q^{s/p}+(1-\eeps)\epr q^{r/p}+(1-\epr)(1-\eeps)\rb,\end{multline}
\begin{multline}\label{mdrs12} M_d^{r,s}= q^{-(r+s)/2}\lb\sum_{m|(r,s)\atop{mp|r+s\atop{mp\nmid r}}}\pi(m)m^2+e_{p,r}e_{p,s}q^{(r+s)/p}\rb+\\
+q^{-(r+s)/2}\lb(1-\epr)\eeps q^{s/p}+(1-\eeps)\epr q^{r/p}+(1-\epr)(1-\eeps)\rb.\end{multline}
\end{prop}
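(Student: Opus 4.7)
My plan is to start from Lemma \ref{trs}, which yields
$$M_d^{r,-s} = q^{-(r+s)/2}\sum_{\al\in\fqr,\be\in\ffqs}\afd{\psi(\tqrp f(\al) - \tqsp f(\be))}.$$
By Lemma \ref{rsd} (together with a direct treatment of the cases $\al = 0$ or $\be = 0$, which reduce to Lemma \ref{lem0} via $f(0) = 0$), the inner average is either $1$ or $0$, so the task reduces to counting the pairs $(\al,\be)$ for which the average equals $1$.

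These ``good'' pairs fall into two (possibly overlapping) categories: (A) pairs with $\al \in A_r$ and $\be \in A_s$, which always contribute $1$ regardless of $f$; and (B) pairs of nonzero elements sharing a common monic minimal polynomial $g$ of degree $m$, with $pm \mid r-s$. Enumerating category (A) by the four subcases determined by $\epr$ and $\eeps$ immediately produces
$$\epr\eeps q^{(r+s)/p} + (1-\epr)\eeps q^{s/p} + (1-\eeps)\epr q^{r/p} + (1-\epr)(1-\eeps),$$
matching the trailing terms of \rf{mdrs11}. For category (B), each monic irreducible polynomial $g \in \fq[x]$ of degree $m$ contributes $m$ roots in $\fqr$ (provided $m \mid r$) and $m$ roots in $\ffqs$ (provided $m \mid s$), so the number of ordered pairs with common minimal polynomial $g$ is $m^2$; summing over $g$ and then over $m \mid (r,s)$ with $pm \mid r-s$ gives $\sum \pi(m) m^2$.

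The crux of the argument is the inclusion--exclusion needed to avoid counting the pairs in $A \cap B$ twice. If $\al \ne 0$ lies in $A_r$ then $p \mid r$ and $\al \in \F_{q^{r/p}}$, so the minimal polynomial of $\al$ has degree $m$ dividing $r/p$; equivalently, $pm \mid r$. The same reasoning applies to $\be$, and the constraint $pm \mid r-s$ then forces $pm \mid s$ automatically. Therefore the pairs in $A \cap B$ are precisely those in (B) with $pm \mid r$, and restricting the sum by imposing $pm \nmid r$ eliminates exactly these overlapping contributions, giving the first term of \rf{mdrs11}.

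Equation \rf{mdrs12} for $M_d^{r,s}$ is proved by the same method, using the second half of Lemma \ref{rsd} with the divisibility $pm \mid r+s$ in place of $pm \mid r-s$. The main obstacle, beyond invoking Lemma \ref{rsd}, is precisely the bookkeeping for the overlap: one must keep track of which pairs are already counted in category (A) as a function of the divisibilities $p \mid r$ and $p \mid s$, and the edge cases where $\al$ or $\be$ equals $0$ have to be handled by the Lemma \ref{lem0} argument since Lemma \ref{rsd} assumes both are nonzero.
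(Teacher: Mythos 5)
Your proposal is correct and follows essentially the same route as the paper: Lemma \ref{trs} to reduce to a double sum, Lemma \ref{rsd} (plus Lemma \ref{lem0} for the zero cases) to identify the ``good'' pairs, and then a count of conjugate pairs together with the $A_r \times A_s$ pairs. The only cosmetic difference is that you phrase the combination as explicit inclusion--exclusion over $A\cup B$, whereas the paper directly partitions the good pairs by imposing $mp\nmid r$ from the outset (observing that this condition forces $\al\notin A_r$, $\be\notin A_s$, so the two contributions are disjoint); these are equivalent bookkeepings and your identification of $A\cap B$ with the $pm\mid r$ subcase of (B) is exactly right.
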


\begin{proof} We prove \rf{mdrs11}, \rf{mdrs12} begin similar. By Lemma \ref{trs} we have 
\beq\label{mrs}M^{r,-s}_d=q^{-(r+s)/2}\sum_{\al\in\fqr\atop{\be\in\ffqs}}\afd{\psi(\tqrp f(\al)-\tqsp f(\be))}.\eeq
Now we can use Lemma \ref{rsd} to evaluate this expression. Denote by $g,h$ the monic minimal polynomials (over $\fq$) of $\al\in\fqr,\be
\in\ffqs$ respectively. First we count the contribution of those $\al,\be$ for which $g=h$ (i.e. they are conjugate), $p\deg g|r-s$ and $p\deg g\nmid r$
(and consequently $p\deg g\nmid s$),
so $\al\not\in A_r$ and $\be\not\in A_s$. 
Each polynomial $g$ has exactly $\deg g$ roots and the number of pairs $\al,\be$ with minimal polynomial $g$ is $(\deg g)^2$.
The contribution of all such $g$ to the sum in \rf{mrs} is $$\sum_{m|(r,s)\atop{mp|r-s\atop{mp\nmid r}}}\pi(m)m^2.$$

It remains to evaluate the contribution of the pairs $\al\in A_r,\be\in A_s$ (see the notation in the previous lemma).
If $p|(r,s)$ then every pair $\al\in\fqrp,\be\in\fqsp$ contributes 1 to the sum (by Lemma \ref{rsd}) and the number of such pairs is
$q^{(r+s)/p}$. If $p|r$ but $p\nmid s$ then the pairs $\al\in\fqrp,\be=0$ (and only them) contribute 1, so we get a total contribution
of $q^{r/p}$. The case $p\nmid r,p|s$ is treated similarly. If $p\nmid rs$ then only $\al=\be=0$ adds 1 to the sum.
In any case we get the value stated in the proposition.

To prove \rf{mdrs12} we proceed similarly using \rf{rsd2}.
\end{proof}

\begin{thm}\label{thm1p} Assume $r\ge s>0$ and $r+s<d$. Then 
$$M^{r,-s}_d=\del_{r,s}r+O\lb rq^{-r/2}+q^{(1/p-1/2)(r+s)}\rb,$$
where $$\del_{r,s}=\choice{1,}{r=s,}{0,}{r\neq s.}$$
We also have $$M^{r,s}_d=O\lb rq^{(1/p-1/2)(r+s)}\rb.$$\end{thm}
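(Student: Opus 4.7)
The plan is to read off the stated bounds directly from Proposition \ref{mdrs1} by estimating each summand with the prime polynomial theorem $\pi(m)=q^m/m+O(q^{m/2}/m)$ and the elementary geometric bound $\sum_{m\le M}mq^m=O(Mq^M)$.

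For $M_d^{r,-s}$ the central quantity is $\sum\limits_{m|(r,s),\,mp|r-s,\,mp\nmid r}\pi(m)m^2$. Since $\pi(m)m^2=mq^m+O(mq^{m/2})$, such a sum is dominated by its term with the largest admissible $m$. When $r=s$ the divisibility $mp\mid r-s$ is automatic and $m=r$ is admissible (since $rp\nmid r$), contributing $\pi(r)r^2=rq^r+O(rq^{r/2})$; all proper divisors $m\le r/2$ together contribute at most $O(rq^{r/2})$ by the geometric bound. After multiplying by $q^{-(r+s)/2}=q^{-r}$, this yields exactly the main term $r$ plus an error $O(rq^{-r/2})$. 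When $r>s$ every admissible $m$ satisfies $mp\le r-s$, so the whole sum is $O\bigl(rq^{(r-s)/p}\bigr)$, and after scaling by $q^{-(r+s)/2}$ it is $O\bigl(rq^{(r-s)/p-(r+s)/2}\bigr)$, which is $O\bigl(rq^{(1/p-1/2)(r+s)}\bigr)$ using $s\ge 0$ and $1/p-1/2<0$.

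The remaining terms $e_{p,r}e_{p,s}q^{(r+s)/p}$, $(1-\epr)\eeps q^{s/p}$, $(1-\eeps)\epr q^{r/p}$ and $(1-\epr)(1-\eeps)$ appearing in \rf{mdrs11}, each multiplied by $q^{-(r+s)/2}$, are all trivially bounded by $q^{(1/p-1/2)(r+s)}$ (using $1/p<1/2$ and $r,s\ge 0$). Assembling these estimates gives the claimed asymptotic for $M_d^{r,-s}$. The analysis of \rf{mdrs12} proceeds identically, except that the condition in the sum is $mp\mid r+s$ instead of $mp\mid r-s$. The hypothesis $p>2$ is essential here: if $m=r=s$, then $mp\mid 2r$ together with $m\mid r$ would force the odd prime $p$ to divide $2$, contradiction. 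Hence $m=r$ is never admissible in the $M_d^{r,s}$ sum, and no main term of order $r$ can appear. Every admissible $m$ satisfies $mp\le r+s$, so the sum is $O\bigl(rq^{(r+s)/p}\bigr)$, giving $M_d^{r,s}=O\bigl(rq^{(1/p-1/2)(r+s)}\bigr)$ after scaling and absorbing the leftover summands as above.

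The argument is essentially bookkeeping; the only real (and minor) subtlety is the verification just mentioned that under $p>2$ no $m=r$ term survives in the $M_d^{r,s}$ expansion, so that the bound genuinely lacks a leading $\del_{r,s}r$ contribution. Everything else reduces to the prime polynomial theorem and simple geometric series estimates.
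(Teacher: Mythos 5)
Your approach is the same as the paper's: read off Proposition~\ref{mdrs1}, apply the prime polynomial theorem $\pi(m)=q^m/m+O(q^{m/2}/m)$, and bound the resulting sums by their largest admissible $m$. The case $r=s$, the absorption of the four extra summands, and the observation that for $p>2$ the $M_d^{r,s}$ sum contains no contribution giving a main term (because $mp\mid r+s$ and $mp\nmid r$ are incompatible when $m\mid(r,s)$ and $r=s$ and $p$ is odd) are all carried out correctly.

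There is one discrepancy worth flagging. For $r>s$ you only invoke the constraint $mp\le r-s$, obtaining $O\bigl(rq^{(r-s)/p-(r+s)/2}\bigr)\le O\bigl(rq^{(1/p-1/2)(r+s)}\bigr)$, whereas the theorem asserts $O\bigl(rq^{-r/2}+q^{(1/p-1/2)(r+s)}\bigr)$ --- the second summand without the factor $r$. Your bound neither matches nor implies the stated one. The paper's own argument obtains the sharper form by additionally invoking $m\le(r,s)$ together with the claim that $(r,s)\le s/2$ when $r>s$; but that claim is false when $s\mid r$ (then $(r,s)=s$), and indeed one can check that for $s\mid r$, $p\mid(r/s)-1$ and $r=s(p+1)$, the term $m=s$ contributes $sq^{(s-r)/2}$ after scaling, which exceeds $q^{(1/p-1/2)(r+s)}$ by the unbounded factor $sq^{-2s/p}$. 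So your bound, though formally weaker than the theorem statement, is the one that the method actually delivers; and since it still decays geometrically in $r+s$, it is fully adequate for the use made of the theorem in the proof of Theorem~\ref{thm2p}. In short: the proposal is correct and mirrors the paper's proof; the extra factor of $r$ you retain is not a gap in your argument but rather points to a slip in the paper's own treatment of the $r>s$ case.
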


\begin{proof} By the Proposition \ref{mdrs1} we have $$M_d^{r,-s}=\sum_{m|(r,s)\atop{mp|r-s\atop{mp\nmid r}}}\pi(m)m^2+O\lb q^{(1/p-1/2)(r+s)}\rb.$$
It is well known that $\pi(m)=q^m/m+O(q^{m/2}/m)$ (see \cite[\S 2]{rosen}). If $r=s$ then
$$\sum_{m|r\atop{mp\nmid r}}\pi(m)m^2=rq^r+O(rq^{r/2})$$ (note that except for $m=2$ and possibly $m=r/2$ the other terms are negligible),
which implies the assertion of the theorem for the case $r=s$.
If $r>s$ then $(r,s)\le s/2$ and $$\sum_{m|(r,s)\atop{mp|r-s\atop{mp\nmid r}}}\pi(m)m^2=O(rq^{s/2}),$$ which implies the assertion of the theorem for $r\neq s$.
The second part of the Theorem follows similarly from the second part of Proposition \ref{mdrs1} 
(note that this time only $m$ s.t. $mp|r+s$ contribute to the sum).
\end{proof}

The last result should be compared with the following (see \cite[Thm. 2]{ds}):
$$\la T_U^r\overline{T_U^s}\ra_{U\in\U_N}=\del_{r,s}\min(r,N),$$ $$\la T_U^rT_U^s\ra_{U\in\U_N}=0$$
(for $r,s>0$).

\subsection{Proof of Theorem \ref{thm2p}}\label{pthm2p}

For simplicity we will prove Theorem \ref{thm2p} for $\th=0$, the proof of the general case proceeds with only slight modifications.
We will write $S^2(f,\psi)$ instead of $S^2_0(f,\psi)$ for the 2-level density function.

\begin{lem}$$S^2(f,\psi)=\frac{1}{(d-1)^2}\sum_{r,s=-\ity}^\ity\hw\lb\frac{r}{d-1},\frac{s}{d-1}\rb\lb T^{r,s}_{f,\psi}-T^{r+s}_{f,\psi}\rb.$$
\end{lem}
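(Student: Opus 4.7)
The plan is to expand the periodic two-variable window $v_{d-1}(t,u)$ in a Fourier series, evaluate it at the angles of the normalised zeroes of $L_{f,\psi}$, and then convert the sum over $j\neq k$ using the identity
$$\sum_{j\neq k}\rho_j^r\rho_k^s=\lb\sum_j\rho_j^r\rb\lb\sum_k\rho_k^s\rb-\sum_j\rho_j^{r+s}=T^{r,s}_{f,\psi}-T^{r+s}_{f,\psi}.$$
This is the two-variable analogue of the argument in section \ref{pthm2} used to deduce Theorem \ref{thm2} from Theorem \ref{thm1}.

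The key computation is the two-dimensional analogue of \rf{fourlem}, which says that
$$v_{d-1}(t,u)=\frac{1}{(d-1)^2}\sum_{r,s=-\ity}^\ity\hw\lb\frac{r}{d-1},\frac{s}{d-1}\rb e^{irt+isu}.$$
To prove this, I would insert the defining sum
$$v_{d-1}(t,u)=\sum_{m,n=-\ity}^\ity V((d-1)(t+2\pi m),(d-1)(u+2\pi n))$$
into the standard Fourier coefficient formula $c_{r,s}=\frac{1}{4\pi^2}\int_0^{2\pi}\int_0^{2\pi}v_{d-1}(t,u)e^{-irt-isu}\dd t\dd u$: the $(m,n)$-summation unfolds the double integral over $[0,2\pi]^2$ into an integral over $\R^2$ (using that $e^{-irt-isu}$ is $2\pi$-periodic in each variable for integers $r,s$), and after the substitution $t\mapsto t/(d-1)$, $u\mapsto u/(d-1)$ the coefficient becomes $c_{r,s}=\frac{1}{(d-1)^2}\hw\lb\frac{r}{d-1},\frac{s}{d-1}\rb$. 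Since $V\in\srr$, absolute convergence of the resulting series is immediate.

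Finally, substituting $(t,u)=(\th_j,\th_k)$ (so that $e^{irt+isu}=\rho_j^r\rho_k^s$) into the Fourier expansion, summing over ordered pairs with $j\neq k$, and interchanging the order of summation (justified by the rapid decay of $\hw$) yields
$$S^2(f,\psi)=\frac{1}{(d-1)^2}\sum_{r,s=-\ity}^\ity\hw\lb\frac{r}{d-1},\frac{s}{d-1}\rb\sum_{j\neq k}\rho_j^r\rho_k^s,$$
and the opening identity converts the inner sum into $T^{r,s}_{f,\psi}-T^{r+s}_{f,\psi}$, which is the claim. No substantial obstacle arises; the whole argument is a routine two-variable version of the one-variable Fourier calculation already carried out in section \ref{pthm2}, the only care needed being the justification of the interchange of the infinite double sum over $(r,s)$ with the finite sum over pairs $(j,k)$.
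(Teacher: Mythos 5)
Your proof is correct and takes essentially the same approach as the paper: compute the Fourier coefficients of the bi-periodic window $v_{d-1}$ by unfolding, expand in a Fourier series, substitute the zero angles, and convert $\sum_{j\neq k}\rho_j^r\rho_k^s$ into $T^{r,s}_{f,\psi}-T^{r+s}_{f,\psi}$. The only cosmetic difference is that the paper first writes $S^2(f,\psi)=\sum_{j,k}v_{d-1}(\th_j,\th_k)-\sum_j v_{d-1}(\th_j,\th_j)$ and then applies the Fourier expansion to each piece separately, whereas you apply the expansion to the off-diagonal sum and then use the algebraic identity; the two orderings produce the identical computation.
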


\begin{proof} We have \beq\label{breaks}S^2(f,\psi)=\sum_{j,k=1}^{d-1}v_{d-1}(\th_j,\th_k)-\sum_{j=1}^{d-1}v_{d-1}(\th_j,\th_j).\eeq
The Fourier series coefficients of the bi-periodic function $v_{d-1}$ are given by
\begin{multline*}\hat{v}_{d-1}(r,s)=\frac{1}{4\pi^2}\int_0^{2\pi}\int_0^{2\pi}v_{d-1}(t,u)e^{-irt-isu}\dd t\dd u=\\=\frac{1}{(d-1)^2}\hv\lb\frac{r}{d-1},\frac{s}{d-1}\rb,\end{multline*}
the derivation is by a standard calculation, similar to that of \ref{fourlem}.
Since $v_{d-1}$ is smooth the following holds for $|z|=|w|=1$:
\begin{multline*}v_{d-1}(\arg z,\arg w)=\sum_{r,s=-\ity}^\ity\hat{v}_{d-1}(r,s)z^rw^s=\\=\frac{1}{(d-1)^2}\sum_{r,s=-\ity}^\ity\hw\lb\frac{r}{d-1},\frac{s}{d-1}\rb z^rw^s.\end{multline*}
Now if $\rho_1,...,\rho_{d-1}$ are the normalised zeroes of $L_{f,\psi}$ and $\rho_j=e^{i\th_j}$ then
\begin{multline}\label{mu1}\sum_{j,k=1}^{d-1}v_{d-1}(\th_j,\th_k)=
\frac{1}{(d-1)^2}\sum_{j,k=1}^{d-1}\sum_{r,s=-\ity}^\ity\hw\lb\frac{r}{d-1},\frac{s}{d-1}\rb\rho_j^r\rho_k^s=\\
=\frac{1}{(d-1)^2}\sum_{r,s=-\ity}^\ity\hw\lb\frac{r}{d-1},\frac{s}{d-1}\rb T^{r,s}_{f,\psi},\end{multline}
and 
\begin{multline}\label{mu2}\sum_{j=1}^{d-1}v_{d-1}(\th_j,\th_j)=\frac{1}{(d-1)^2}\sum_{j=1}^{d-1}\sum_{r,s=-\ity}^\ity\hw\lb\frac{r}{d-1},\frac{s}{d-1}\rb\rho_j^{r+s}=\\
=\frac{1}{(d-1)^2}\sum_{r,s=-\ity}^\ity\hw\lb\frac{r}{d-1},\frac{s}{d-1}\rb T^{r+s}_{f,\psi}.\end{multline}
Combining \rf{breaks},\rf{mu1} and \rf{mu2} we obtain the statement of the Lemma.
\end{proof}

\begin{cor}\label{corp}$$\afd{S^2(f,\psi)}=\frac{1}{(d-1)^2}\sum_{r,s=1}^\infty\hw\lb\frac{r}{d-1},\frac{s}{d-1}\rb
\lb M^{r,s}_d-M^{r+s}_d\rb.$$
\end{cor}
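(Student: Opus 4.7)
The plan is to obtain the corollary by averaging the identity of the preceding lemma term by term over $f\in\fd$. Starting from
$$S^2(f,\psi)=\frac{1}{(d-1)^2}\sum_{r,s=-\infty}^\infty\hw\lb\frac{r}{d-1},\frac{s}{d-1}\rb\lb T^{r,s}_{f,\psi}-T^{r+s}_{f,\psi}\rb,$$
the first step is to justify swapping the finite average $\afd{\cdot}$ with the doubly infinite sum. This is legitimate because $V\in\srr$ forces $\hw$ to decay faster than any polynomial in both arguments, while the trivial bound $|T^r_{f,\psi}|\le d-1$ holds uniformly in $f$ and $r$. Consequently the double series converges absolutely and uniformly in $f\in\fd$, so term-by-term averaging is valid.

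After this exchange, substituting the definitions $\afd{T^{r,s}_{f,\psi}}=M^{r,s}_d$ and $\afd{T^{r+s}_{f,\psi}}=M^{r+s}_d$ immediately yields
$$\afd{S^2(f,\psi)}=\frac{1}{(d-1)^2}\sum_{r,s=-\infty}^\infty\hw\lb\frac{r}{d-1},\frac{s}{d-1}\rb\lb M^{r,s}_d-M^{r+s}_d\rb.$$
To collapse the range of summation to the first quadrant $r,s\ge 1$, I would use three symmetries: (a) $T^{-r}_{f,\psi}=\ol{T^r_{f,\psi}}$, which gives $M^{-r,-s}_d=\ol{M^{r,s}_d}$ and $M^{-r}_d=\ol{M^r_d}=M^r_d$ (the last equality by Proposition \ref{irr}); (b) reality of $V$, which gives $\hw(-\eta,-\xi)=\ol{\hw(\eta,\xi)}$; and (c) reality of $\afd{S^2(f,\psi)}$ itself, as $v_{d-1}$ is a real-valued function. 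Pairing $(r,s)$ with $(-r,-s)$ combines the four coordinate half-planes into the open first quadrant, and the axis slices $r=0$ or $s=0$ must be handled separately using $T^0_{f,\psi}=d-1$ and the difference structure $M^{r,s}_d-M^{r+s}_d$ appearing in the bracket.

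The main (mild) obstacle lies entirely in this last bookkeeping step: tracking the factors of two that appear when combining conjugate index pairs, and verifying that the contributions from the axes $r=0$ or $s=0$ either vanish or get absorbed into the stated main sum in a way consistent with the normalising factor $1/(d-1)^2$. Beyond this combinatorial bookkeeping, the corollary is essentially a one-line substitution from the lemma, with no further analytic input required.
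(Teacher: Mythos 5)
Your first half is exactly right and is all the paper's proof consists of: since $V\in\srr$, the Fourier transform $\hw$ decays rapidly, while $|T^r_{f,\psi}|\le d-1$ uniformly, so the double series converges absolutely and uniformly in $f$, and averaging term by term yields
$$\afd{S^2(f,\psi)}=\frac{1}{(d-1)^2}\sum_{r,s=-\infty}^\infty\hw\lb\frac{r}{d-1},\frac{s}{d-1}\rb\lb M^{r,s}_d-M^{r+s}_d\rb.$$
The paper's proof stops here; it does not attempt any reduction of the summation range, and the printed lower bound $r,s=1$ in the statement is evidently a typo for $r,s=-\infty$. This is confirmed by the way the corollary is invoked at the start of section \ref{pthm2p}: equation \rf{msum} is a sum over $r,s\in\Z$ with $|r|+|s|<d$, not over the positive quadrant.

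The second half of your plan, collapsing to the first quadrant, contains a genuine error and would not succeed. Pairing $(r,s)$ with $(-r,-s)$ only matches the first quadrant with the third; the mixed-sign quadrants (say $r>0$, $s<0$) are matched with each other, not absorbed into $r,s\ge 1$. Those mixed-sign terms involve $M^{r,-s}_d$, which Proposition \ref{mdrs1} and Theorem \ref{thm1p} show to be a genuinely different (and much larger) quantity than $M^{r,s}_d$: it carries the $\delta_{r,s}\,r$ main term along the antidiagonal $s=-r$. In fact that antidiagonal is precisely where the limit $\hv(0,0)-\int \hv(\sigma,-\sigma)K(\sigma)\,d\sigma$ comes from in the proof of Theorem \ref{thm2p}, so discarding or folding it away cannot produce a correct identity. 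The lesson is not one of bookkeeping subtlety but of recognising that the corollary should retain the full bilateral range.
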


\begin{proof} Just average the previous lemma over $\fd$.\end{proof}

We need one more lemma:

\begin{lem}\label{flem} Let $V\in\srr$ be a window function, $\hv$ its Fourier transform. Denote $K(\sig)=\max(1-|\sig|,0)$.
We have \begin{multline*}\inti\hv(\sig,-\sig)K(\sig)\dd \sig=
\hv(0,0)-\\-\frac{1}{4\pi^2}\inti\inti V(t,u) \lb 1-\lb\frac{\sin((t-u)/2)}{(t-u)/2}\rb^2\rb\dd t\dd u.\end{multline*}\end{lem}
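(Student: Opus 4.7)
The plan is to recognize $K(\sigma)$ as the classical triangle function whose (inverse) Fourier transform is a squared sinc, and then carry out a direct computation by swapping the order of integration.

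First I would establish the auxiliary identity
\[\int_{-\infty}^{\infty} K(\sigma)\, e^{-i\sigma\tau}\, \dd\sigma = \lb\frac{\sin(\tau/2)}{\tau/2}\rb^2 \quad (\tau\in\R).\]
Since $K$ is supported on $[-1,1]$ and even, the left-hand side equals $2\int_0^1 (1-\sigma)\cos(\sigma\tau)\dd\sigma$, which integration by parts reduces to $2(1-\cos\tau)/\tau^2 = (2\sin(\tau/2)/\tau)^2$, as claimed.

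Next I would substitute the definition
\[\hv(\sigma,-\sigma) = \frac{1}{4\pi^2}\inti\inti V(t,u)\, e^{-i\sigma(t-u)}\, \dd t\, \dd u\]
into the left-hand side of the lemma and apply Fubini's theorem (justified because $V\in\srr$ and $K$ is compactly supported and bounded, so the absolute integrand is in $L^1$ of $\R^3$). Setting $\tau=t-u$ and using the auxiliary identity yields
\[\inti \hv(\sigma,-\sigma) K(\sigma)\, \dd\sigma = \frac{1}{4\pi^2}\inti\inti V(t,u) \lb\frac{\sin((t-u)/2)}{(t-u)/2}\rb^2 \dd t\, \dd u.\]

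Finally, noting that $\hv(0,0) = \frac{1}{4\pi^2}\inti\inti V(t,u)\dd t\, \dd u$, the claimed identity follows by subtracting the displayed equation from $\hv(0,0)$ and rearranging. There is no real obstacle here; the only point requiring care is the justification of Fubini, which is immediate from the hypotheses.
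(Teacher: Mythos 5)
Your proof is correct. The underlying idea is the same as the paper's (both ultimately reduce to the fact that the triangle function $K$ and the squared sinc are a Fourier transform pair, and then pair this against $V$), but the route differs in a couple of concrete ways. The paper introduces the auxiliary function $Y(\tau)=\inti V(t+\tau,t)\dd t$, verifies $Y\in\sr$, computes $\hat Y(\sig)=2\pi\hv(\sig,-\sig)$, cites (rather than derives) the transform pair $(K,\mathrm{sinc}^2)$, and then applies Plancherel. You instead derive the transform of $K$ explicitly by integration by parts and collapse everything into a single Fubini interchange, justified by $V\in\srr$ and the compact support of $K$. Your version is slightly more elementary and self-contained — it avoids Plancherel and the detour through $Y$, and it supplies the $\mathrm{sinc}^2$ computation that the paper leaves to the reader — while the paper's version isolates the one-variable Schwartz function $Y$ whose transform and smoothness are worth having on record. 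Both are complete proofs; the one small point worth flagging is that your Fubini justification is exactly what takes the place of the paper's verification that $Y\in\sr$.
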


\begin{proof} Define $Y(\tau)=\inti V(t+\tau,t)\dd t$. We have $Y\in\sr$. It is easy to see from the definitions and Fubini's theorem that the Fourier
transform of $Y(\tau)$ is $\hat{Y}(\sig)=2\pi\hv(\sig,-\sig)$. The Fourier transform of the function $\lb\sin (\tau/2)/(\tau/2)\rb^2$
is $K(\sig)$, so we have by Plancherel's theorem
\begin{multline*}\inti\hv(\sig,-\sig)K(\sig)\dd\sig=\frac{1}{2\pi}\inti\hat{Y}(\sig)K(\sig)\dd\sig
\\=\frac{1}{4\pi^2}\inti Y(\tau)\lb\frac{\sin(\tau/2)}{\tau/2}\rb^2\dd\tau=\\
=\frac{1}{4\pi^2}\inti\inti V(t,u)\lb \frac{\sin((t-u)/2)}{(t-u)/2}\rb^2\dd t\dd u=\\
=\hv(0,0)-\frac{1}{4\pi^2}\inti\inti V(t,u) \lb 1-\lb\frac{\sin((t-u)/2)}{(t-u)/2}\rb^2\rb\dd t\dd u.\end{multline*}
\end{proof}

Now we are ready to prove Theorem \ref{thm2p}. Assume $p>2$. Let $V\in\srr$ be s.t. $\hw$ is supported on $|\eta|+|\xi|\le 1$. By Corollary \ref{corp} we have
\beq\label{msum}\afd{S^2(f,\psi)}=\frac{1}{(d-1)^2}\sum_{r,s\in\Z\atop{|r|+|s|<d}}\hw\lb\frac{r}{d-1},\frac{s}{d-1}\rb \lb M^{r,s}_d-M^{r+s}_d\rb.\eeq

First we bound the contribution to the sum \rf{msum} of $r,s$ s.t. $r\neq -s$. By Theorem \ref{thm1} we have
$$\sum_{r,s\in\Z\atop{|r|+|s|<d\atop{r\neq -s}}}M_d^{r+s}=O\lb d\sum_{r=1}^d M_d^r\rb=O(d)$$
(note that $M_d^r=M_d^{-r}$), since $M_d^r$ decreases geometrically in $r$ for $0<r<d$.
Similarly, by Theorem \ref{thm1p} we also have
$$\sum_{r,s\in\Z\atop{|r|+|s|<d\atop{r\neq -s}}}M_d^{r,s}=O(d)$$ as $M_d^{r,s}$ for $r\neq s$ decreases geometrically in $|r|+|s|$.
The overall contribution to the RHS of \rf{msum} is $O(1/d)$ (note that $\hv$ is bounded).

It remains to estimate \begin{multline*}\frac{1}{(d-1)^2}\sum_{-d\le r\le d}\hv\lb\frac{r}{d-1},\frac{s}{d-1}\rb \lb M_d^{r,-r}-M_d^0\rb=\\=
\frac{d}{d-1}\hv(0,0)+\frac{1}{d-1}\sum_{-d/2<r<d/2\atop{r\neq 0}}\hv\lb\frac{r}{d-1},\frac{-r}{d-1}\rb\lb M_d^{r,-r}-d+1\rb
\end{multline*} (note that $M_d^0=d-1,M_d^{0,0}=(d-1)^2$).
Invoking Theorem \ref{thm1p} and noting that the error terms accumulate to at most $O(1/d^2)$ 
(the error term for $M_d^{r,-r}$ in Theorem \ref{thm1p} decreases
geometrically in $r$) we see that
\begin{multline*}\afd{S^2(f,\psi)}=\\=\hv(0,0)+\frac{1}{(d-1)^2}\sum_{-d/2< r< d/2\atop{r\neq 0}} \hw\lb\frac{r}{d-1},\frac{-r}{d-1}\rb(|r|-d+1)+O(1/d)=\\
=\hv(0,0)+\sum_{-d/2<r<d/2\atop{r\neq 0}}\lb\frac{|r|}{d-1}-1\rb\hw\lb\frac{r}{d-1},\frac{-r}{d-1}\rb\frac{1}{d-1}+O(1/d)\to\\\to \hv(0,0)+\inti \hv(\sig,-\sig)(|\sig|-1)\dd\sig=
\hv(0,0)-\inti \hv(\sig,-\sig)K(\sig)\dd\sig\end{multline*}
as $d\to\ity$ by the definition of the Riemann integral (we used the fact that $\hv(\sig,-\sig)$ is supported on $\lbb -1/2,1/2\rbb$).
Now using Lemma \ref{flem} we obtain the assertion of Theorem \ref{thm2p}.


\section{Reformulation in terms of Dirichlet \\L-functions and generalisation}\label{secdirg}

In the present section we will see that the family of L-functions $L_{f,\psi},f\in\fd$ is actually a special case of a family of Dirichlet L-functions corresponding to multiplicative
characters of $\F_q[x]$ and generalise our main results to such families.

\subsection{Dirichlet characters and L-functions}\label{secdir}

We briefly recall the basic properties of Dirichlet characters and L-functions over $\fq[x]$. For details see \cite[\S 4]{rosen}.
Let $Q(x)\in\fq[x]$ be a monic polynomial of degree $m$ and let $\chi:\lb\fq[x]/Q\rb^\times\to\C^\times$ be a character of the multiplicative
group of residues modulo $Q$. We may extend $\chi$ to $\fq[x]$ by $$\chi(g)=\choice{\chi(g\bmod Q),}{(g,Q)=1,}{0,}{\mbox{otherwise}.}$$
The map $\chi:\fq[x]\to\C$ thus defined is called a Dirichlet character.
The character $\chi$ is called primitive if there is no proper divisor $Q_1$ of $Q$ s.t. $\chi(g)$ for $g$ prime to $Q$ only depends on $g\bmod Q_1$.
It is called trivial if it takes the value 1 on all polynomials prime to $Q$. 
It is called even if it takes the value 1 on constants and odd otherwise. We denote $$e(\chi)=\choice{1,}{\chi\mbox{ is even},}{0,}{\chi\mbox{ is odd}.}$$

Denote by $\M$ the set of monic polynomials in $\fq[x]$ and by $\p$ the set of monic irreducible polynomials in $\fq[x]$. 
The L-function corresponding to the character $\chi$ is defined as follows:
\beq\label{dirl}L_\chi(z)=\sum_{g\in\M}\chi(g)z^{\deg g}=\prod_{h\in\p}(1-\chi(h)z)^{-1}.\eeq
It turns out that for a primitive character $\chi$ modulo $Q$ the function $L_\chi(z)$ is a polynomial of degree $d-2$ if $\chi$ is even and
$d-1$ if $\chi$ is odd. Further it factors as follows:
$$L_\chi(z)=(1-z)^{e(\chi)}\prod_{i=1}^{d-1-e(\chi)}(1-\rho_iq^{1/2}z)$$ with $|\rho_i|=1$. The $\rho_i$ are called the normalised zeroes of $L_\chi(z)$.

\subsection{Dirichlet characters corresponding to A-S curves}\label{diras}

Let $d$ be a natural number and $\psi\in\Psi$ a nontrivial additive character of $\F_p$. Let $f=\sum_{i=0}^da_ix^i\in\fd$ be a polynomial. 
We will attach
a Dirichlet character $\chi$ modulo $x^{d+1}$ to $f,\psi$. Let $g=\sum_{i=1}^kb_ix^i\in\fq[x]$ be a polynomial. If $x|g$ we define $\chi(g)=0$.
Otherwise we may write \beq\label{gpr}g(x)=\prod_{i=1}^k(1-\al_ix),\eeq where $\al_i$ are the roots of $g$ in the algebraic closure of $\fq$. 

First we observe that for every natural $j$ the quantity $\sum_{i=1}^k\al_i^j$ lies in $\fq$ and depends only on the coefficients
$b_0,...,b_j$ (in other words it depends only on $g\bmod x^{d+1}$).
As in section \ref{pthm1} we denote by $\sig_j$ the order $j$ elementary symmetric function in the variables $x_1,...,x_k$.
By \rf{gpr} we have $\sig_j(\al_1,...,\al_k)=(-1)^jb_j/b_0$. Using Newton's identity \ref{newton} recursively we can show that
$\sum_{i=1}^kx_i^j$ can be expressed as a polynomial in $\sig_l,1\le l\le \min(j,k)$ with integer coefficients. Therefore $\sum_{i=1}^k\al_i^j$
can be expressed as a polynomial in $b_1/b_0,...,b_j/b_0$ with integer coefficients and so it must lie in $\fq$ and depends only on\\
$b_1/b_0,...,b_j/b_0$ (and therefore only on $g\bmod x^{d+1}$).

It follows from the above that the quantity $$\sum_{i=1}^k f(\al_i)=\sum_{j=0}^d a_j\sum_{i=1}^k\al_i^j$$ is in $\fq$ and depends only on
$b_1/b_0,...,b_{\min(d,k)}/b_0$. Now we define \beq\label{dir}\chi(g)=\psi\lb\tqp\sum_{i=1}^k f(\al_i)\rb.\eeq
By what we have seen $\chi(g)$ is well defined and depends only on $g\bmod x^{d+1}$. Further it is multiplicative, because the set of zeroes
(counting multiplicity) of a product of two polynomials is the union of their sets of zeroes. Therefore $\chi$ is a Dirichlet character
modulo $x^{d+1}$. Obviously for constant $g$ we have $\chi(g)=1$, so $\chi$ is even. By \ref{dir} we also have that $\chi^p$ is trivial,
so $\chi$ is an order $p$ character modulo $x^{d+1}$.

We fix $\psi\in\Psi$ and denote by $\chi_f$ the character constructed above for a given $f\in\fd$.

\begin{lem} For any $f\in\fd$ the character $\chi_f$ is primitive. The characters $\chi_f,f\in\fd$ are all distinct and any primitive
character $\chi$ modulo $x^{d+1}$ with $\chi^p$ trivial is of the form $\chi=\chi_f$ for some $f\in\fd$.\end{lem}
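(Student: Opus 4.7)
My approach is to enlarge the domain and prove all three assertions at once by a dimension-counting argument. Let $\widetilde{\mathcal{F}}_d=\{f=\sum_{i=0}^d a_ix^i\in\fqx : a_i=0 \text{ when } p\mid i\}$, so that $\fd\subset\widetilde{\mathcal{F}}_d$ is the subset cut out by $a_d\ne 0$. The formula \rf{dir} defines $\chi_f$ for every $f\in\widetilde{\mathcal{F}}_d$, and $f\mapsto\chi_f$ is $\fp$-linear because $\sigma_f(g):=\sum_i f(\alpha_i)$ is additive in $f$; its image lies in the group $H$ of characters of $(\fqx/x^{d+1})^\times$ killed by $p$. A short structure computation — decomposing $(\fqx/x^{d+1})^\times=\fqs\times U_1$ with $U_1=1+x\fqx/x^{d+1}$, observing $\gcd(|\fqs|,p)=1$, and using $(1+h)^p=1+h^p$ in characteristic $p$ to pin down $U_1^p=\{1+c_px^p+c_{2p}x^{2p}+\ldots+c_{\floor{d/p}p}x^{\floor{d/p}p}\}$ — shows that $|H|=|U_1/U_1^p|=q^{d-\floor{d/p}}=\#\widetilde{\mathcal{F}}_d$. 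Thus, as soon as the extended map $\widetilde{\mathcal{F}}_d\to H$ is proven injective, it is automatically a bijection, and the remaining task is to identify $\fd$ with the primitive characters on the character side.

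\textbf{Key computation.} The core input is to evaluate $\chi_f$ on elements $1-cx^k$ with $k$ coprime to $p$. Since $1-cx^k$ splits over $\overline{\fq}$ as $\prod_{j=1}^k(1-\alpha_jx)$ with $\alpha_j$ the $k$-th roots of $c$, the standard power-sum formula (vanishing unless $k\mid i$, else equal to $kc^{i/k}$) gives
\[
\sigma_f(1-cx^k) = k\sum_{1\le i\le d\atop k\mid i} a_i\,c^{i/k}.
\]
When $k$ is the largest index with $a_k\ne 0$, only $i=k$ survives, collapsing this to $ka_kc$, so $\chi_f(1-cx^k)=\psi(\tqp(ka_kc))$; this is nontrivial for some $c\in\fq$ because $p\nmid k$, $a_k\ne 0$, and $\tqp$ is a nondegenerate $\fp$-linear form on $\fq$. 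This immediately yields injectivity of $\widetilde{\mathcal{F}}_d\to H$, hence the bijection promised above.

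\textbf{Primitivity and cleanup.} To match $\fd$ with primitive characters I would use the same computation with $k=d$. Primitivity of a character mod $x^{d+1}$ is equivalent to nontriviality of its restriction to the kernel $U_d=1+x^d\fqx/x^{d+1}\cong(\fq,+)$ of reduction to $(\fqx/x^d)^\times$. The computation gives $\sigma_f(1-cx^d)=d\,a_d\,c$: this is nontrivial in $c$ precisely when $a_d\ne 0$, i.e.\ when $f\in\fd$. The argument in the other direction is symmetric: if $a_d=0$, no other $i=jd$ can contribute because of the degree bound $i\le d$. A sanity check against the cardinalities (using $(d,p)=1$ to relate modulus $x^d$ to modulus $x^{d+1}$) confirms there are exactly $(q-1)q^{d-1-\floor{d/p}}=\#\fd$ primitive characters in $H$.

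\textbf{Main obstacle.} The one genuinely delicate step is the ``leading index'' maneuver in the key computation — specifically, ensuring that higher multiples of $k$ within $[1,d]$ do not interfere. This works in the injectivity half because by choice $a_{jk}=0$ for $j\ge 2$, and in the primitivity direction because the only relevant $i$ with $d\mid i$ and $i\le d$ is $i=d$ itself. Once this bookkeeping is straightened out, everything else is routine.
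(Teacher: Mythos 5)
Your proposal is correct and follows essentially the same route as the paper: the decisive computation of $\chi_f(1-cx^k)$ at the top nonvanishing index $k$ (used for both injectivity/distinctness and primitivity), combined with a cardinality count of the order-$p$ characters modulo $x^{d+1}$, are exactly the paper's two ingredients. Your reorganization — extending to $\widetilde{\mathcal{F}}_d$, observing $\F_p$-linearity of $f\mapsto\chi_f$, and phrasing everything as an isomorphism onto the $p$-torsion subgroup $H\cong(U_1/U_1^p)^*$ — is a clean repackaging (the paper's ``take $f_1-f_2$ and look at its degree-$e$ coefficient'' is the same linearity in disguise), but not a genuinely different method.
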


\begin{proof} First we show that for $f\in\fd$ the character $\chi=\chi_f$ is primitive. For this it is enough to show that for some $c\in\fq$
we have $\chi(1-cx^d)\neq 1$. Write $f=\sum_{i=0}^da_ix^i$. It is easy to see from the definition that $\chi(1-cx^d)=\psi(\tqp (cda_d))$.
Since $(d,p)=1$ and $a_d\neq 0$ there exists $c\in\fq$ s.t. $\tqp (cda_d)\neq 0$.

Now let $f_1,f_2\in\fd$ s.t. $f_1\neq f_2$. We will show that $\chi_{f_1}\neq\chi_{f_2}$.
By the definition of $\fd$ there exists some $e\le d$ s.t. 
$f=f_1-f_2\in\fe$. Let $g\in\fq[x]$ be some polynomial prime to $x$. Write it as $g(x)=g(0)\prod_{i=1}^k(1-\al_ix)$ with 
$\al_i\in\fqr$ for some $r$. We have $$\chi_{f_1}(g)\chi_{f_2}(g)^{-1}=\psi\lb\tqp\sum_{i=1}^k f(\al)\rb.$$
It is enough to show that for some $g$ we have $\tqp\sum_{i=1}^k f(\al)\neq 0$.
Taking $g=1-cx^e$ we have (as above) $\chi(g)=\psi(\tqp(cea_e))$ where 
$f=\sum_{i=0}^ea_ix^i$ and using the fact that $(e,p)=1$ and $a_e\neq 0$ we can pick $c$ so that $\chi(g)\neq 1$.

We have seen that the correspondence $f\mapsto\chi_f$ is one-to-one from $\fd$ to the set of order $p$ primitive characters modulo $x^{d+1}$.
To show that it is onto it is enough to show that these sets are identical in size. Recall that $\#\fd=(q-1)q^{d-\floor{d/p}-1}$.
For a finite abelian group $G$ denote by $G^*$ its dual and by $G[m]$ the $m$-torsion of the group $G$. 
The groups $G,G^*$ are always isomorphic. Take $G=(\fq[x]/x^{d+1})^\times$. First we compute $\#G^*[p]=\#G[p]$, which is the number
of all order $p$ characters modulo $x^{d+1}$. Each element of $G$ is represented uniquely by a polynomial $g(x)=\sum_{i=0}^dc_ix^i$.
We have $g(x)^p=\sum_{i=1}^dc_ix^{pi}$ and $g(x)^p\equiv 1\pmod{x^{d+1}}$ iff $c_1=c_2=...=c_\floor{d/p}=0$ and $c_0=1$.
Therefore $\#G^*[p]=\#G[p]=q^{d-\floor{d/p}+1}$ and this is also the number of order $p$ characters modulo $x^{d+1}$.
By the same reasoning the number of order $p$ characters modulo $x^d$ is $q^{d-\floor{(d-1)/p}}=q^{d-\floor{d/p}}$ (since $(d,p)=1$)
and so 
the number of primitive characters modulo $x^{d+1}$ equals $q^{d-\floor{d/p}+1}-q^{d-\floor{d/p}}=(q-1)q^{d-\floor{d/p}-1}=\#\fd$.
\end{proof}

\begin{lem}\label{lemm} For the character $\chi$ defined above we have $L_\chi(z)=(1-z)L_{f,\psi}$. In particular the normalised nontrivial zeroes
of $L_{\chi}(z)$ coincide with the normalised zeroes of $L_{f,\psi}$.\end{lem}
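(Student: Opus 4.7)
The plan is to compare $\log L_\chi(z)$ with $\log L_{f,\psi}(z)+\log(1-z)$ by matching their power series coefficients.

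First, I would start from the Euler product $L_\chi(z)=\prod_{h\in\p}(1-\chi(h)z^{\deg h})^{-1}$ and take the logarithm to get
$$\log L_\chi(z)=\sum_{r=1}^\infty\frac{z^r}{r}\sum_{h\in\p\,:\,\deg h\mid r}(\deg h)\,\chi(h)^{r/\deg h}.$$
On the other hand, from the definition of $L_{f,\psi}$ and the identity $\log(1-z)=-\sum_{r\ge 1}z^r/r$ one has
$$\log(1-z)+\log L_{f,\psi}(z)=\sum_{r=1}^\infty\frac{z^r}{r}\left(\sum_{\al\in\fqr}\psi(\tqrp f(\al))-1\right).$$
So it suffices to show that for every $r\ge 1$,
$$\sum_{h\in\p\,:\,\deg h\mid r}(\deg h)\,\chi(h)^{r/\deg h}=\sum_{\al\in\fqr}\psi(\tqrp f(\al))-1. \qquad(\star)$$

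For $(\star)$, the key step is to rewrite the factors $\chi(h)^{r/\deg h}$ in terms of values of $\psi\circ\tqrp\circ f$ at the roots of $h$. Let $h\in\p$, $h\ne x$, be irreducible of degree $m\mid r$, and let $\al_1,\ldots,\al_m\in\F_{q^m}\subset\fqr$ be its roots; these form a single Frobenius orbit, so since $f\in\fq[x]$ we have $\sum_{i=1}^m f(\al_i)=\t_{q^m/q}f(\al_1)$. By the defining formula \rf{dir} and transitivity of trace,
$$\chi(h)=\psi\lb\tqp\sum_{i=1}^m f(\al_i)\rb=\psi(\t_{q^m/p}f(\al_1)).$$
Setting $k=r/m$ and using again transitivity together with the fact that $\t_{q^r/q^m}$ acts as multiplication by $k$ on $\F_{q^m}$, we get $\chi(h)^k=\psi(k\,\t_{q^m/p}f(\al_1))=\psi(\tqrp f(\al_1))$, and by the same reasoning for each conjugate, $m\,\chi(h)^k=\sum_{i=1}^m\psi(\tqrp f(\al_i))$.

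Next, I would sum over all $h\in\p$ with $\deg h\mid r$, $h\ne x$. Since every nonzero element $\al\in\fqr$ is a root of a unique such $h$ (with $\deg h=[\fq(\al):\fq]\mid r$), the left-hand side of $(\star)$ equals $\sum_{\al\in\fqr,\al\ne 0}\psi(\tqrp f(\al))$. The case $h=x$ contributes nothing because $\chi(x)=0$; the missing $\al=0$ term on the right contributes $\psi(\tqrp f(0))=1$ (since $f\in\fd$ has no constant term), which is exactly what produces the $-1$ on the right of $(\star)$. This establishes $(\star)$ and hence the factorization $L_\chi(z)=(1-z)L_{f,\psi}(z)$.

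The second claim is then immediate: the factor $(1-z)$ is precisely the trivial factor $(1-z)^{e(\chi)}$ present in $L_\chi$ because $\chi$ is even (as established in section \ref{diras}), so the remaining Weil zeroes of $L_\chi$, and hence its normalised nontrivial zeroes, coincide with the normalised zeroes of $L_{f,\psi}$. There is no serious obstacle here; the only thing to be careful about is correctly bookkeeping the trace transitivity $\tqrp=\t_{q^m/p}\circ\t_{q^r/q^m}$ and the exclusion of $h=x$ against the omission of $\al=0$.
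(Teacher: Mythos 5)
Your proof takes essentially the same route as the paper's: compare the power-series expansions (the paper works with the logarithmic derivative $\dz\log$, which is cosmetically different), group $\al\in\fqr$ by minimal polynomial, express $\psi(\tqrp f(\al))$ as a power of $\chi$ at that polynomial, and pair $\al=0$ against $h=x$.

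There is, however, one real slip in the middle. In \rf{gpr} the $\al_i$ satisfy $g(x)=\prod_i(1-\al_ix)$, so they are the \emph{inverse} roots of $g$ (the word ``roots'' in the paper is a misnomer, as the paper's own proof makes clear). Therefore, if $\al_1,\ldots,\al_m$ are the genuine roots of the irreducible polynomial $h$, then by \rf{dir}
$$\psi\!\lb\tqp\sum_{i=1}^m f(\al_i)\rb=\chi(h^*),$$
where $h^*(x)=x^mh(1/x)=\prod_i(1-\al_ix)$ is the reciprocal polynomial, \emph{not} $\chi(h)$ as you asserted. The paper handles this carefully: it writes $\psi(\tqrp f(\al))=\chi(h^*)^{r/s}$ and then uses the fact that $h\mapsto h^*$ is a degree-preserving bijection of $\p\sm\{x\}$ onto itself to convert the sum over $h^*$ into a sum over $h$. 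Your identity $(\star)$ is nevertheless true, precisely because of that bijection — summing $(\deg h)\chi(h)^{r/\deg h}$ over all $h$ of degree dividing $r$ gives the same total as summing $(\deg h)\chi(h^*)^{r/\deg h}$ — but the equation $\chi(h)=\psi(\t_{q^m/p}f(\al_1))$ as you stated it is incorrect, and the missing $h\leftrightarrow h^*$ step needs to appear explicitly for the argument to be airtight.
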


\begin{proof} Since both $L_\chi(z)$ and $(1-z)L_{f,\psi}$ have constant coefficient 1 it it enough to show that
$$\frac{\dd}{\dd z}\log L_\chi(z)=\dz\log\lb(1-z)L_{f,\psi}\rb.$$ 
Let $r$ be a natural number, $\al\in\fqr$ an element with minimal polynomial $h$ (over $\fq$) of degree $s|r$.
Denote by $\al_1=\al,...,\al_s$ the roots of $h$ in $\fqr$. We have $\tqrp(f(\al))=\sum_{i=1}^r f(\al^{p^i})=\frac{r}{s}\sum_{i=1}^s f(\al_i)$
and so for $\al\neq 0$ we have $\psi(\tqrp(f(\al)))=\chi(h^*)^{r/s}$ (recall that $h^*=\sum_{i=0}^sc_{s-i}x^i$ where
$h=\sum_{i=0}^sx^i$ and it has roots $\al_1^{-1},...,\al_s^{-1}$). For $\al=0$ we have $\tqrp(f(\al))=rf(0)$.

Using the above and \ref{dirl} we obtain \begin{multline*}\dz\log L_{f,\psi}=\sum_{r=1}^\infty \sum_{\al\in\fqr}\psi(\tqrp f(\al))z^{r-1}=\\
=\frac{1}{z}\sum_{r=1}^\ity\sum_{h\in\p,\deg h|r,h\neq x}\deg(h)\chi(h^*)^{r/\deg(h)}z^r+\frac{1}{z}\sum_{r=1}^\ity\psi(\tqp f(0))^rz^r=\\=
\frac{1}{z}\sum_{h\in\p}\sum_{k=1}^\ity\deg(h)\chi(h)^kz^{k\deg h}+\frac{1}{1-z}
=\sum_{h\in\p}\frac{\deg(h)\chi(h)z^{\deg h-1}}{1-\chi(h)z^{\deg h}}+\frac{1}{1-z}=\\=
\dz\log\lb\prod_{h\in\p}(1-\chi(h)z^{\deg h})^{-1}\rb-\dz\log(1-z)=\dz\log\lb L_{\chi_f}(1-z)^{-1}\rb \end{multline*}
(we used the fact that the operation $h\mapsto h^*$ permutes $\p\sm\{x\}$, that $\chi(x)=0$ and that $f(0)=0$ for $f\in\fd$).
\end{proof}

We see that the family of L-functions $L_{f,\psi}(z),f\in\fd$ coincides with the family of L-functions $L^*_\chi(z)=(1-z)^{-1}L_\chi(z)$ where $\chi$
ranges over the order $p$ primitive characters modulo $x^{d+1}$. Next we generalise Theorems \ref{thm1} and \ref{thm2} to more general
families of Dirichlet characters, obtaining a new (but essentially equivalent) proof of our results for A-S L-functions.

\subsection{The family of Dirichlet L-functions corresponding to a subgroup of $(\fq[x]/Q)^{\times}$}

For any finite Abelian group $A$ we denote by $A^*$ its dual group. Let $Q(x)\in\fq[x]$ be a monic polynomial of degree $m$. 
Denote by $G$ the group of characters modulo $Q$, which we will also identify with $(\fq[x]/Q)^{\times*}$,
i.e. we view the elements of $G$ also as characters of $(\fq[x]/Q)^\times$.
Let $H$ be a subgroup of $G$. For any $Q'|Q$ we denote by $H_{Q'}$ the subgroup of $H$ consisting of the characters which have period $Q'$ (or a divisor
of $Q'$). Denote by $H'$ the set of primitive characters in $H$. Denote by $H^\bot$ the subgroup of $(\fq[x]/Q)^\times$ consisting of elements
$g$ s.t. $\chi(g)=1$ for all $\chi\in H$. It is the subgroup of $(\fq[x]/Q)^\times$ orthogonal to $H\ss(\fq[x]/Q)^{\times*}$ and its order is
$\#H^\bot=\#G/\#H$ (this relation holds for any finite abelian group).
The following orthogonality relation holds for $g\in\fq[x]$:
\beq\label{ort} \sum_{\chi\in H}\chi(g)=\choice{\#G/\#H,}{g\bmod Q\in H^\bot}{0,}{\mbox{otherwise}.}\eeq
We denote by $\p(H)$ the set of monic irreducible polynomials $h$ s.t. $h\bmod Q\in H^\bot$.

Let $\chi$ be an even primitive character modulo $Q$. Recall that its L-function can be factored as \beq\label{dirlzeros}L_\chi(z)=(1-z)\prod_{i=1}^{m-2}(1-q^{1/2}\rho_i),\eeq with $|\rho_i|=1$ ($\rho_i$ are the normalised zeroes of the L-function).

\begin{lem}\label{1}Let $r$ be a natural number. 
$$\sum_{i=1}^{m-2}\rho_i^r=-q^{-r/2}-q^{-r/2}\sum_{h\in\p,\deg h|r}(\deg h)\chi(h)^{r/\deg h}.$$\end{lem}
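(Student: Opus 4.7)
The plan is to compute the logarithmic derivative $z\dfrac{\dd}{\dd z}\log L_\chi(z)$ in two different ways, using the Euler product on one hand and the factorisation \rf{dirlzeros} on the other, and then equate the coefficient of $z^r$ in the resulting power series.

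First I would start from the Euler product $L_\chi(z)=\prod_{h\in\p}(1-\chi(h)z^{\deg h})^{-1}$. Taking $\log$ and expanding each $-\log(1-\chi(h)z^{\deg h})$ as a power series in $z$, then differentiating, gives
\[
z\dz\log L_\chi(z)=\sum_{h\in\p}\sum_{k=1}^\infty (\deg h)\chi(h)^k z^{k\deg h}=\sum_{r=1}^\infty\lb\sum_{h\in\p,\deg h|r}(\deg h)\chi(h)^{r/\deg h}\rb z^r,
\]
where in the last step I collect together, for each $r$, the contributions from pairs $(h,k)$ with $k\deg h=r$.

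Next I would compute $z\dz\log L_\chi(z)$ using \rf{dirlzeros}. Writing $\log L_\chi(z)=\log(1-z)+\sum_{i=1}^{m-2}\log(1-q^{1/2}\rho_i z)$, differentiating, multiplying by $z$, and expanding each geometric series yields
\[
z\dz\log L_\chi(z)=-\sum_{r=1}^\infty z^r-\sum_{r=1}^\infty q^{r/2}\lb\sum_{i=1}^{m-2}\rho_i^r\rb z^r.
\]

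Finally, equating the coefficients of $z^r$ in the two expressions gives
\[
\sum_{h\in\p,\deg h|r}(\deg h)\chi(h)^{r/\deg h}=-1-q^{r/2}\sum_{i=1}^{m-2}\rho_i^r,
\]
and solving for $\sum_{i=1}^{m-2}\rho_i^r$ yields the claim. There is no real obstacle here: the only thing to verify carefully is that the Euler product and the polynomial factorisation both converge/agree as formal power series in $z$ (so termwise coefficient comparison is legitimate), and that the factor $(1-z)$ from the even character $\chi$ is correctly accounted for in the logarithmic derivative—it is precisely this factor that produces the extra $-q^{-r/2}$ term in the final formula.
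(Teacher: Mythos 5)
Your proof is correct and follows essentially the same route as the paper: compute $z\dz\log L_\chi(z)$ once from the Euler product \rf{dirl} and once from the factorisation \rf{dirlzeros}, then compare coefficients of $z^r$. (In fact your version is cleaner than the paper's displayed computation, which drops a minus sign when expanding $-\sum_{h\in\p}\frac{\chi(h)(\deg h)z^{\deg h}}{1-\chi(h)z^{\deg h}}$ into a power series, though the final statement of the lemma is as you derived it.)
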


\begin{proof} By \ref{dirl} and $\ref{dirlzeros}$ we have
\begin{multline*}\sum_{r=1}^\ity q^{r/2}\lb q^{-r/2}+\sum_{i=1}^{m-2}\rho_i^r\rb z^r=z\lb \frac{1}{1-z}+\sum_{i=1}^{m-2}\frac{q^{1/2}\rho_i}{1-q^{1/2}\rho_iz}\rb=\\
=-z\dz\log\lb (1-z)\prod_{i=1}^{m-2} (1-q^{1/2}\rho_iz)\rb=-z\dz\log L_\chi(z)=\\=-\sum_{h\in\p}\frac{\chi(h)(\deg h)z^{\deg h}}{1-\chi(h)z^{\deg h}}
=\sum_{r=1}^\ity\sum_{h\in\p}(\deg h)\chi(h)^rz^{r\deg h}=\\=\sum_{r=1}^\ity\lb\sum_{h\in\p,\deg h|r}(\deg h)\chi(h)^{r/\deg h}\rb z^r.\end{multline*}
Comparing coefficients at $z^r$ we obtain the statement of the lemma.\end{proof}

We denote $T_\chi^r=\sum_{i=1}^{m-2}\rho_i$. For a group of Dirichlet characters $J$ modulo $Q$ and a natural number $s$ denote
$\eta(J,s)=\#\{h\in\p(J)|\deg h=s$.
For a nonzero polynomial $P\in\fq[x]$ with factorisation $P=P_1...P_k$ into irreducibles we denote
$$\mu(P)=\choice{(-1)^k,}{P\mbox{ is squarefree},}{0,}{\mbox{otherwise}}$$
(this is the M\"{o}bius function on $\fq[x]$). For an abelian group $A$ and natural number $k$ denote by $A^k$ the subgroup
of $k$-th powers in $A$. Finally denote by $H^\pr$ the set of primitive characters in $H$
and $M_H^r=\la T_\chi^r\ra_{\chi\in H^\pr}$.
The following proposition is a generalisation of Proposition \ref{irr} to an arbitrary family
of Dirichlet characters corresponding to a subgroup $H$ of characters modulo $Q$.

\begin{prop}\label{dirprop}
$$M_H^r=\la T_\chi^r\ra_{\chi\in H^\pr}=-q^{r/2}-\frac{q^{-r/2}}{\#H^\pr}\sum_{Q'|Q}\mu\lb\frac{Q}{Q'}\rb\# H_{Q'}\sum_{s|r}s\cdot \eta\lb H_{Q'}^{r/s},s\rb,$$
where $\sum_{Q'|Q}$ denotes summation over monic divisors $Q'$ of $Q$.\end{prop}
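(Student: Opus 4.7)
The plan is to apply Lemma \ref{1} termwise to each primitive $\chi\in H^\pr$, average, and then use Möbius inversion on divisors of $Q$ to convert the restriction to primitive characters into a signed sum over the nested subgroups $H_{Q'}$, $Q'\mid Q$. Once that is done, orthogonality inside each $H_{Q'}$ collapses the inner character sum to a count of irreducible polynomials $h$ with $h^{r/s}\bmod Q\in H_{Q'}^\bot$, which is exactly what $\eta(H_{Q'}^{r/s},s)$ records.

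More concretely, Lemma \ref{1} rewrites as
$$T_\chi^r=-q^{-r/2}-q^{-r/2}\sum_{s\mid r}s\sum_{h\in\p,\deg h=s}\chi(h)^{r/s},$$
so averaging over $\chi\in H^\pr$ yields
$$M_H^r=-q^{-r/2}-\frac{q^{-r/2}}{\#H^\pr}\sum_{s\mid r}s\sum_{h\in\p,\deg h=s}\sum_{\chi\in H^\pr}\chi(h)^{r/s}$$
(I read the $-q^{r/2}$ in the stated formula as a typo for $-q^{-r/2}$). The classical identity for conductors, $\mathbf{1}[\chi\text{ primitive mod }Q]=\sum_{Q'\mid Q}\mu(Q/Q')\mathbf{1}[\chi\in H_{Q'}]$, immediately gives $\sum_{\chi\in H^\pr}\chi(h)^{r/s}=\sum_{Q'\mid Q}\mu(Q/Q')\sum_{\chi\in H_{Q'}}\chi(h)^{r/s}$. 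Writing $\chi(h)^{r/s}=\chi(h^{r/s})$ and applying the orthogonality relation \rf{ort} inside the subgroup $H_{Q'}$, the innermost sum equals $\#H_{Q'}$ when $h^{r/s}\bmod Q\in H_{Q'}^\bot$ and vanishes otherwise. The condition $h^{r/s}\bmod Q\in H_{Q'}^\bot$ is by the definition of the dual power exactly $h\bmod Q\in(H_{Q'}^{r/s})^\bot$, i.e.\ $h\in\p(H_{Q'}^{r/s})$. Summing over $h\in\p$ of degree $s$ then produces $\#H_{Q'}\cdot\eta(H_{Q'}^{r/s},s)$, and substituting back assembles the claimed formula.

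The only bookkeeping point is the treatment of irreducible $h$ dividing $Q$: by the extension-by-zero convention of section \ref{secdir} every $\chi\in H$ vanishes on such $h$, while at the same time $h\bmod Q$ is not a unit so $h\notin\p(J)$ for any $J$; these $h$ therefore contribute zero on both sides and may be silently discarded. Beyond this, the only mildly subtle step is the translation of orthogonality for $H_{Q'}$ evaluated at $h^{r/s}$ into orthogonality for $H_{Q'}^{r/s}$ evaluated at $h$, which is precisely what forces the exponent $r/s$ on $H_{Q'}$ in the final formula. No part of the argument looks like a real obstacle; the proposition is essentially a repackaging of Lemma \ref{1} plus Möbius inversion plus character orthogonality.
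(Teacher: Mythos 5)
Your argument is correct and is essentially the paper's proof: the paper likewise uses the inclusion--exclusion identity $\sum_{\chi\in H^\pr}X(\chi)=\sum_{Q'|Q}\mu(Q/Q')\sum_{\chi\in H_{Q'}}X(\chi)$ and then applies Lemma \ref{1} together with the orthogonality relation \rf{ort} inside each $H_{Q'}$, using the same observation that $h^{r/s}\bmod Q'\in H_{Q'}^\bot$ iff $h\bmod Q'\in(H_{Q'}^{r/s})^\bot$; the only difference is that you apply Lemma \ref{1} before the inclusion--exclusion step rather than after, which commutes trivially. You are also right that the displayed $-q^{r/2}$ is a typo for $-q^{-r/2}$ (and the same slip appears in the paper's own proof, where $T_\chi^r+q^{r/2}$ should read $T_\chi^r+q^{-r/2}$).
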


\begin{proof}
First of all it follows from the inclusion-exclusion principle that for any map $X:H\to\C$ we have
\beq\label{ie}\sum_{\chi\in H^\pr}X(\chi)=\sum_{Q'|Q}\mu(Q/Q')\sum_{\chi\in H_{Q'}}X(\chi).\eeq
Next, by Lemma \ref{1} and the orthogonality relation \rf{ort} for any $Q'|Q$ we have
\begin{multline*}\sum_{\chi\in H_{Q'}}(T_\chi^r+q^{r/2})=-q^{-r/2}\cdot\#H_{Q'}\cdot\sum_{h\in\p\atop{\deg h|r\atop{h^{r/\deg h}\bmod Q'\in \lb H_{Q'}^{r/s}\rb^\bot}}}\deg h=
\\=-q^{-r/2}\cdot\#H_{Q'}\sum_{s|r}s \cdot \eta\lb H_{Q'}^{r/s},s\rb\end{multline*}
(we used the fact that $h^{r/s}\bmod Q'\in H_{Q'}^\bot$ iff $h\bmod Q'\in\lb H_{Q'}^{r/s}\rb^\bot$).
Combining this with \rf{ie} we obtain the statement of the proposition.
\end{proof}

The last proposition can be used to obtain bounds on $M_H^r$. For example assume that $Q$ is irreducible. Then it follows from the proposition
that for $r\ge m$ we have $M_H^r=O\lb rq^{r/2-m}\#H^\bot\rb$, since the total number of monic polynomials $h$ with $\deg h=r,h\bmod Q\in H^\bot$ is
$q^{r-m}\#H^\bot$. For $r<m$ we have $M_H^r=O\lb rq^{-r/2}\#H^\bot\rb$.

For another example take $Q=x^{d+1}$ and $H=\lb(\fq[x]/Q)^\times\rb^*[p]$ (the group of order $p$ characters modulo $Q$). We have seen in section
\ref{diras} that $H^\pr=\{\chi_f\}_{f\in\fd}$, so $M_H^r=M_d^r$. Proposition \ref{irr} now follows from proposition \ref{dirprop}. Indeed
the only divisors $Q'|Q$ s.t. $\mu(Q/Q')\neq 0$ are $Q'=x^{d+1},x^d$ and
$H_{Q'}^\bot$ consists of the (invertible) $p$-th powers modulo $Q'$, which are represented by polynomials of the form
$a_0+a_1x^p+...+a_\floor{d'/p}x^{\floor{d'/p}p}$ where $d'=\deg Q'-1$. Combining this with the fact that $H^k=H$ if $(k,p)=1$ and $H^k$ is trivial
if $p|k$ the formula in proposition \ref{dirprop} translates into the one in proposition \ref{irr} (note that for $s<d$ we have
$\et{H^\bot,s}=0$, since the classes in $H^\bot$ are represented by $p$-th powers). 
Our main theorems follow from the latter proposition.

\section{A-S family with odd polynomials}\label{odd}

Throughout this section $p>2$. Let $d$ be odd and as usual $(d,p)=1$. Denote by $\od$ the subset of odd polynomials in $\fd$, i.e.
polynomials $f\in\fd$ satisfying $f(-x)=-f(x)$, in other words only odd powers of $x$ appear in $f$.
We call $\od$ as well as the corresponding family of curves and L-functions the odd A-S family. As with $\fd$ the family
$\{L_{f,\psi}\}_{f\in\od}$ does not depend on the choice of $\psi\in\Psi$.
In the present section we formulate conjectures for $\od$ analogous to our main results for $\fd$ based on
a random symplectic matrix model. However we will only be able to prove a very weak result in this direction.

\begin{lem} For $f\in\od,\psi\in\Psi$ we have $L_f(z)\in\R[z]$.\end{lem}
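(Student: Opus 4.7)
The plan is to combine the factorisation $L_f(z)=\prod_{\psi\in\Psi}L_{f,\psi}(z)$ from \rf{pr} with a symmetry coming from $f(-x)=-f(x)$, and conclude that each primitive factor $L_{f,\psi}$ already has real coefficients. Since $p>2$ by assumption in this section, the substitution $\al\mapsto -\al$ is a well-defined bijection of $\fqr$.

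First I would apply this substitution in the exponential sum defining $L_{f,\psi}$ in \rf{deflpsi}. Because $f$ is odd we have $f(-\al)=-f(\al)$, so $\tqrp f(-\al)=-\tqrp f(\al)$, and thus
\[
\sum_{\al\in\fqr}\psi\bigl(\tqrp f(\al)\bigr)\;=\;\sum_{\al\in\fqr}\psi\bigl(-\tqrp f(\al)\bigr)\;=\;\sum_{\al\in\fqr}\pb\bigl(\tqrp f(\al)\bigr).
\]
Plugging this identity into the exponential series in \rf{deflpsi} termwise gives
$L_{f,\psi}(z)=L_{f,\pb}(z)$ for every $\psi\in\Psi$.

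Next I would use the general fact that complex conjugation on the coefficients of $L_{f,\psi}(z)$ corresponds to replacing $\psi$ by $\pb$: since the values $\psi(a)$ are roots of unity, $\overline{\psi(a)}=\pb(a)$, and hence by \rf{deflpsi} the polynomial obtained from $L_{f,\psi}(z)$ by conjugating its coefficients is precisely $L_{f,\pb}(z)$. Combined with the previous step, each coefficient of $L_{f,\psi}$ equals its own complex conjugate, so $L_{f,\psi}(z)\in\R[z]$. Taking the product over $\psi\in\Psi$ gives $L_f(z)\in\R[z]$.

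There is no real obstacle here; the one thing worth flagging is that the argument uses $p>2$ in an essential way via the bijection $\al\mapsto-\al$, and this matches the standing assumption of the section. In fact the argument yields the stronger statement that each individual primitive factor $L_{f,\psi}$ has real coefficients, a fact that will be used later to motivate the symplectic (rather than unitary) random matrix model for this subfamily.
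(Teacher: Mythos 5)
Your proposal is correct and follows essentially the same route as the paper: both arguments rest on the substitution $\al\mapsto-\al$ (valid since $p>2$) together with $f(-\al)=-f(\al)$ to show that the exponential sums $\sum_{\al\in\fqr}\psi(\tqrp f(\al))$ are real, and hence that each primitive factor $L_{f,\psi}$ has real coefficients. The only cosmetic difference is that you package the observation as the identity $L_{f,\psi}=L_{f,\bar\psi}$ and then invoke the conjugate-coefficient relation, whereas the paper pairs $\al$ with $-\al$ directly to see the sum is real; these are the same computation.
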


\begin{proof} By \rf{deflpsi} is would suffice to show that
$\sum_{\al\in\fqr}\psi(\tqrp f(\al))\in\R$ for every natural $r$.
Since $f$ is an odd polynomial we have $f(-\al)=-f(\al)$ for $\al\in\fqr$ and so
$\psi(\tqrp f(-\al))=\ol{\psi(\tqrp f(\al))}$ and partitioning $\fqr\sm\{0\}$ into pairs $\al,-\al$ we obtain $\sum_{\al\in\fqr}\psi(\tqrp f(\al))\in \R$.\end{proof}

The latter fact suggests that we model the set of normalised L-zeroes of a random $f\in\od$ by the set of eigenvalues of a random matrix $U\in\usp_{d-1}$
(we denote thus the unitary symplectic group), taken uniformly w.r.t. the Haar measure. Note that the characteristic polynomial of a unitary
symplectic matrix has real coefficients. This is the model usually used for the L-zeroes of a family
of curves over a finite field, provided that the corresponding L-functions do not split into primitive L-functions with non-real coefficients,
as it happens for the entire A-S family if $p>2$. 
A more compelling reason for considering the random symplectic matrix model is an equidistribution result due to N. Katz and P. Sarnak for similar
(and more general) families of L-functions with fixed $d$ and $q\to\infty$, see Theorem 3.10.7 in \cite{monper}. 

For the unitary symplectic group the following holds:
$$\la T_U^r\ra_{U\in\usp_{d-1}}=\choice{-e_{2,r},}{r<d,}{0,}{r\ge d,}$$
see \cite[\S 4]{ds}. 

We conjecture the following

\begin{conj}\label{thm1o} There exists a constant $\del>0$ such that for any $\eps>0$ we have 
$$\aod{T_{f,\psi}^r}=\left\{\begin{array}{ll}{-e_{2,r},} & {r<d} \\ {0,} & {r\ge d}\end{array}\right\}+O_\eps\lb q^{\eps r-\del d}+q^{-\del r}\rb.$$\end{conj}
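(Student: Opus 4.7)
The plan is to adapt the strategy of Theorem~\ref{thm1} to the odd subfamily $\od$. By Lemma~\ref{lem1},
$$\aod{T^r_{f,\psi}} = -q^{-r/2}\sum_{\al\in\fqr}\aod{\psi(\tqrp f(\al))},$$
so the task is to show the inner average vanishes for most $\al$ and to carefully account for the rest. First I would prove an odd-family analog of Lemma~\ref{mainlem}: if there is an odd $k$ with $1\le k<d$, $(k,p)=1$, and $\tqrq(\al^k)\ne 0$, then the inner average vanishes. The proof mirrors that of Lemma~\ref{mainlem} — partition $\od$ into $p$-element cosets $\{g+iax^k : 0\le i<p\}$, which lie in $\od$ because $k$ is odd and coprime to $p$, and apply orthogonality of $a\mapsto\psi(\tqp(a\tqrq(\al^k)))$ on $\fq$. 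Call the remaining $\al$ \emph{bad}.

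Via Newton's identity $p_k = -kc_{s-k}-\sum_{i=1}^{k-1}c_{s-i}p_{k-i}$ applied inductively, where $h(x)=x^s+\sum_{i=0}^{s-1} c_ix^i$ is the minimal polynomial of $\al$ over $\fq$ and $p_k=\sum_j\al_j^k$ is the $k$-th power sum of its roots, the bad condition $p_k=0$ for all odd $k\le d$ with $(k,p)=1$ translates to $c_{s-k}=0$ for the same range (the induction closes cleanly because whenever a term $p_{k-i}$ in the Newton sum is not known to vanish, its coefficient $c_{s-i}$ is already known to vanish from an earlier step). I would then evaluate $\aod{\psi(\tqrp f(\al))}$ explicitly for each bad $\al$ by distinguishing cases according to $c_{s-d}$, as in the last paragraph of Lemma~\ref{mainlem}.

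The main term $-e_{2,r}$ is contributed by the \emph{negation-symmetric} bad $\al$, those satisfying $\al^{q^{r/2}}=-\al$. For such $\al$ the oddness of $f$ gives $\tqrp f(-\al)=-\tqrp f(\al)$, while Galois-invariance of the trace (since $-\al$ is a Frobenius conjugate of $\al$) gives $\tqrp f(-\al)=\tqrp f(\al)$; combining these forces $\tqrp f(\al)=0$ identically on $\od$ and the inner average equals $1$. For $r$ even, the equation $\al^{q^{r/2}}+\al=0$ has exactly $q^{r/2}$ solutions in $\fqr$ (the kernel of $\tr_{\fqr/\F_{q^{r/2}}}$), contributing $-q^{-r/2}\cdot q^{r/2}=-1=-e_{2,r}$; for $r$ odd only $\al=0$ satisfies the equation and its single contribution $-q^{-r/2}$ is absorbed in the error.

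The main obstacle is controlling the contribution of the non-symmetric bad $\al$ by $O_\eps(q^{\eps r-\delta d}+q^{-\delta r})$. A triangle-inequality count of monic irreducible $h$ of degree $s$ with the prescribed vanishing pattern yields only roughly $q^{s-d/2}$ such polynomials, giving after multiplication by $q^{-r/2}$ an error of size $O(rq^{r/2-d/2})$, which is essentially the bound achieved by Theorem~\ref{oddthm} and is far weaker than the conjectured one. Bridging the gap appears to require substantial cancellation in the restricted sum $\sum_\al\aod{\psi(\tqrp f(\al))}$ over non-symmetric bad $\al$, plausibly via a function field analog of Montgomery's conjecture on the distribution of primes (irreducible polynomials) in residue classes modulo $x^{d+1}$ — a deep arithmetic input that lies beyond the methods of the present paper, which is why the statement is formulated as a conjecture.
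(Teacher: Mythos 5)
Since the statement in question is explicitly a \emph{conjecture}, there is no proof in the paper to match against, and you rightly stop short of claiming one: your identification of the structural outline (vanishing lemma, ``bad'' $\al$, extraction of the main term, counting obstruction) and the conclusion that a Montgomery-type arithmetic input is needed is entirely consistent with the paper's own stance. Your extraction of the main term $-e_{2,r}$ is genuinely correct and is a nice direct argument: for $r$ even and $\al$ with $\al^{q^{r/2}}=-\al$, the identity $\tr_{\fqr/\F_{q^{r/2}}}(\al^k)=\al^k+(-\al)^k=0$ for odd $k$ forces $\tqrp f(\al)=0$ for all $f\in\od$ (using $p>2$), and these $\al$ number exactly $q^{r/2}$.

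However, the step where you assert that ``the bad condition $p_k=0$ for all odd $k\le d$ with $(k,p)=1$ translates to $c_{s-k}=0$ for the same range'' and that ``the induction closes cleanly'' is wrong. In the Newton identity $p_k=-kc_{s-k}-\sum_{i=1}^{k-1}c_{s-i}p_{k-i}$, consider a term with $i$ odd and $p\mid i$ (so $k-i$ is even): then $p_{k-i}$ is an even power sum, which your induction does not control, and $c_{s-i}$ is a coefficient at an odd index divisible by $p$, which your induction also does not control (it only establishes $c_{s-j}=0$ for odd $j$ coprime to $p$). Concretely, for $p=3$ and $k=5$ the term $c_{s-3}p_2$ is unconstrained, and in general the first obstruction occurs at $k=p+2$. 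This is precisely why the paper abandons a coefficientwise description and instead passes to the Dirichlet-character picture: Lemma~\ref{hort} identifies the relevant orthogonal complement $H^\bot$ as the set of residues of \emph{products} $g_1(x^p)g_2(x^2)$ modulo $x^{d+1}$, which is the correct (and strictly richer) description of the ``bad'' polynomials. The paper then packages the missing arithmetic input as Conjecture~\ref{niceconj}, which it can verify only in the tiny range $r<Cp\log_q d$, yielding Theorem~\ref{oddthm}. So your high-level picture and your assessment of where the difficulty lies agree with the paper, but the specific Newton-identity characterization of the bad set does not hold and would have to be replaced by the multiplicative description $g_1(x^p)g_2(x^2)$ before any counting can be carried out.
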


From this one can derive using the method of section \ref{pthm2} the following

\begin{conj} Let $V\in\sr$ be a window function, $$v_d(t)=\sum_{n=-\ity}^\ity V(d(t+2\pi n)).$$
For $f\in\od$ denote $Z_f=\sum_{j=1}^{d-1} v_d(\th_j)$, where $\rho_j=e^{i\th_j}$ are the
normalised zeroes of $L_{f,\psi}$. Similarly for a matrix $U\in\usp_{d-1}$ with eigenvalues $\rho_i$ denote $Z_U=\sum_{j=1}^{d-1} w_d(\th_j)$.
Then $$\aod{T_{f,\psi}^r}\to\la T_U^r\ra_{U\in\usp_{d-1}}$$ as $d\to\ity$.\end{conj}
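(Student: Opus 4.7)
The strategy is to mirror the Fourier-analytic derivation of Theorem \ref{thm2} from Theorem \ref{thm1} given in section \ref{pthm2}, now taking Conjecture \ref{thm1o} as input in place of Theorem \ref{thm1}. Expanding the periodic window as a Fourier series on the circle (cf.\ \rf{fourlem}),
\[
v_d(\th)=\sum_{r\in\Z}\hat{v}_d(r)e^{ir\th},\qquad \hat{v}_d(r)=\frac{1}{d}\hat{V}(r/d),
\]
substituting $\th=\th_j$ and summing over the normalised zeroes (resp.\ eigenvalues) gives $Z_f=\sum_{r}\hat{v}_d(r)T^r_{f,\psi}$ and $Z_U=\sum_{r}\hat{v}_d(r)T^r_U$, where $T^r_U=\tr U^r$. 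Averaging over $\od$ and over $\usp_{d-1}$ (with Haar measure) and subtracting,
\[
\aod{Z_f}-\la Z_U\ra_{U\in\usp_{d-1}}=\sum_{r\in\Z}\hat{v}_d(r)\bigl(\aod{T^r_{f,\psi}}-\la T^r_U\ra\bigr),
\]
where the $r=0$ terms (both equal to $d-1$) cancel.

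For $0<|r|<d$, Conjecture \ref{thm1o} gives $\aod{T^r_{f,\psi}}=-e_{2,|r|}+O_\eps(q^{\eps|r|-\del d}+q^{-\del|r|})$, which exactly matches the unitary-symplectic value $\la T^r_U\ra=-e_{2,|r|}$, so the summand reduces to $\hat{v}_d(r)\cdot O_\eps(q^{\eps|r|-\del d}+q^{-\del|r|})$. For $|r|\ge d$, $\la T^r_U\ra=0$ and the same A-S estimate applies (supplemented, when necessary, by the trivial bound $|T^r_{f,\psi}|\le d-1$). The $q^{-\del|r|}$ contribution sums geometrically to $O(1/d)$; for the $q^{\eps|r|-\del d}$ contribution from $|r|<d$, fixing any $\eps<\del$ yields $O(q^{(\eps-\del)d}/d)=o(1)$.

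The main obstacle is the tail $|r|\ge d$. There the conjectured error $q^{\eps r-\del d}$ eventually grows exponentially in $r$ while $\hat{V}(r/d)$ decays only super-polynomially, so the conjecture's estimate alone becomes useless past $r\sim \del d/\eps$. The remedy is to split at a cutoff $R=R(d)$ with $R/d\to\infty$: on $[d,R]$ apply Conjecture \ref{thm1o} with $\eps=\eps(d)$ chosen to shrink slowly (this is legitimate since the conjecture holds for every $\eps>0$, albeit with $\eps$-dependent implicit constants), and on $|r|>R$ bound the summand trivially by $|\hat{V}(r/d)|$ and control $\sum_{|r|>R}|\hat{V}(r/d)|$ via the Schwartz decay of $\hat{V}$. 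Balancing these two regimes while tracking how the constants in Conjecture \ref{thm1o} depend on $\eps$ is the delicate part of the argument --- it is the very same difficulty that forces the unconditional corollary at the end of section \ref{p2} to assume that $\hat{V}$ is compactly supported.
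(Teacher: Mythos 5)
Your Fourier-analytic setup is exactly what ``the method of section \ref{pthm2}'' refers to: the expansion $Z_f=\sum_r\hat{v}_d(r)T^r_{f,\psi}$ with $\hat{v}_d(r)=\hw(r/d)/d$, the parallel expansion for $Z_U$, the cancellation at $r=0$, and (after the evident corrections to the statement, which should read $v_d$ rather than $w_d$ and conclude $\aod{Z_f}\to\la Z_U\ra_{U\in\usp_{d-1}}$) the reduction to bounding $\sum_{r\ne 0}\hat{v}_d(r)\lb\aod{T^r_{f,\psi}}-\la T^r_U\ra\rb$ are all correct. Your treatment of the range $0<|r|<d$ with a fixed $\eps<\del$ is also fine: the $q^{-\del|r|}$ errors sum to $O(1/d)$ and the $q^{\eps|r|-\del d}$ errors sum to $O_\eps\lb q^{(\eps-\del)d}/d\rb=o(1)$.

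You have also put your finger on a genuine gap, and your own proposal does not close it: for $|r|$ of order larger than $d$ the bound $O_\eps\lb q^{\eps r-\del d}\rb$ grows exponentially while $\hw(r/d)$ decays only super-polynomially, and your remedy of taking $\eps(d)\to 0$ alongside a cutoff $R(d)$ with $R/d\to\infty$ is unusable as long as Conjecture~\ref{thm1o} gives no control on how the implied constant behaves as $\eps\to 0$. The intended reading is almost certainly that $\hw$ is assumed to have compact support, exactly as in the parallel Corollary at the end of section \ref{p2}. Under that hypothesis, say $\hw$ supported in $[-K,K]$, only $|r|<Kd$ contribute, one fixes $\eps=\del/(2K)$ once and for all, and the errors sum to $O_{K,V}\lb Kq^{-\del d/2}+1/d\rb=o(1)$ --- the clean transplant of section \ref{pthm2} with Conjecture \ref{thm1o} replacing Theorem \ref{thm1}, the gain being that $K$ may be any constant rather than $1$. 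Without that added hypothesis, or a strengthening of Conjecture \ref{thm1o} making the implied constant uniform in $\eps$, the conclusion for all $V\in\sr$ is not a formal consequence of Conjecture \ref{thm1o}, and you were right to flag this rather than pretend the estimate closes.
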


A possible approach to estimating $\aod{T_{f,\psi}^r}$ is to reformulate the problem in terms of a family of Dirichlet characters and use Proposition \ref{dirprop}. 
We keep the notation of section \ref{secdir}.
We take $Q=x^{d+1}$. Recall that to any polynomial $f\in\fq[x]$ with $\deg f\le d$ we can attach a character $\chi_f$ modulo $x^{d+1}$
defined by \rf{gpr}, \rf{dir}. It is primitive iff $\deg f=d$. We also have $\chi_{f_1f_2}=\chi_{f_1}\chi_{f_2}$ if $\deg f_1,\deg f_2\le d$.
Denote $G=\lb(\fq[x]/Q)^\times\rb^*$. As usual we identify $G$ with the group of characters modulo $Q$. 
Denote $H=\{\chi_f|f\in\fq[x],\deg f\le r,f(-x)=-f(x)\}$. By the above remarks this is a subgroup of $G$. We have
$H^\pr=\{\chi_f\}_{f\in\od}$. By Lemma \ref{lemm} we have
$$\aod{T_{f,\psi}^r}=\la T_\chi^r\ra_{\chi\in H^\pr}.$$

The only monic divisor $Q'|Q$ s.t. $\mu(Q/Q')\neq 0$ is $Q'=x^d$ which we denote by $Q_1$.
Obviously for any natural $s|r$ we have $\eta(H^{r/s},s)=O(q^s)$. Also we have $\#H/\#H^\pr=q/(q-1),\#H_{Q_1}/\#H^\pr=1/(q-1)$.
Note also that $H^2=H$ since $H$ consists of order $p$ characters.
Proposition \ref{dirprop} now implies
\begin{prop}\label{oddprop}
\begin{multline*}\aod{T_{f,\psi}^r}=\\=-\frac{rq^{-r/2}}{q-1}\lb q\et{H,r}+e_{2,r}\frac{q}{2}\et{H,r/2}- \et{H_{Q_1},r}-e_{2,r}\frac{1}{2}\et{H_{Q_1},r/2}\rb+\\+O\lb rq^{-r/6}\rb.\end{multline*}\end{prop}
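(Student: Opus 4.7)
The plan is to apply Proposition \ref{dirprop} directly to the subgroup $H$ of characters defined just above the statement, with $Q=x^{d+1}$. Since $Q$ is a prime power, the monic divisors $Q'|Q$ for which $\mu(Q/Q')\neq 0$ are precisely $Q'=Q$ (with $\mu(1)=1$) and $Q'=Q_1=x^d$ (with $\mu(x)=-1$); every other divisor contributes $\mu(x^k)=0$ for some $k\ge 2$. So the outer sum in Proposition \ref{dirprop} collapses to two inner sums, one involving $H$ and one involving $H_{Q_1}$, with overall scalar factors coming from $\#H/\#H^{\pr}=q/(q-1)$ and $\#H_{Q_1}/\#H^{\pr}=1/(q-1)$, as noted in the paragraph preceding the statement.

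Next I would split the inner sum $\sum_{s|r} s\cdot\eta(H_{Q'}^{r/s},s)$ into main terms and an error. The decisive structural fact is that every character in $H$ (and hence in $H_{Q_1}$) has order dividing $p$, so $H^{r/s}=H$ whenever $(r/s,p)=1$ and $H^{r/s}$ is the trivial subgroup when $p\mid r/s$ (and similarly for $H_{Q_1}$). The divisor $s=r$ gives $r/s=1$, producing the contribution $q\eta(H,r)-\eta(H_{Q_1},r)$ to the main term. The divisor $s=r/2$ appears only when $r$ is even, which is exactly captured by the factor $e_{2,r}$, and then $r/s=2$ is coprime to $p$ because $p>2$, so $H^2=H$, producing the contribution $\tfrac{1}{2}(q\eta(H,r/2)-\eta(H_{Q_1},r/2))$. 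Collecting these with the overall sign in Proposition \ref{dirprop} and the common factor $q^{-r/2}/(q-1)$ produces the displayed main term exactly.

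It remains to absorb into $O(rq^{-r/6})$ both the initial $-q^{-r/2}$ of Proposition \ref{dirprop} and all the other divisors $s|r$. For $s|r$ with $s\le r/3$ and $(r/s,p)=1$ one bounds $s\cdot\eta(H^{r/s},s)\le s\cdot\pi(s)=O(q^s)=O(q^{r/3})$ using the prime polynomial theorem $\pi(s)=q^s/s+O(q^{s/2}/s)$; summing over the divisors of $r$ and multiplying by $q^{-r/2}$ gives $O(rq^{-r/6})$, where the factor $r$ generously dominates the divisor count. For divisors $s$ with $p\mid r/s$ the subgroup $H^{r/s}$ is trivial, so $\eta(H^{r/s},s)$ is essentially $\pi(s)$, and the dominant such case $s=r/p$ contributes $O(rq^{-r/2}\cdot q^{r/p})=O(rq^{-r(1/2-1/p)})$, which is $O(rq^{-r/6})$ precisely because the hypothesis $p\ge 3$ forces $1/2-1/p\ge 1/6$.

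The main obstacle is careful bookkeeping rather than any genuine analytic difficulty: one must track the two scalar ratios $q/(q-1)$ and $1/(q-1)$, the sign $\mu(Q/Q_1)=-1$, and the factor $s$ inside $s\cdot\eta(\cdot,s)$ to land on the specific coefficients $q$, $q/2$, $1$, $1/2$ in the main term. The exponent $r/6$ in the error is driven by the inequality $1/2-1/p\ge 1/6$, which is tight at $p=3$ and fails at $p=2$; this is consistent with the standing hypothesis $p>2$ in Theorem \ref{oddthm}.
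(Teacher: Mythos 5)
Your proposal is correct and follows essentially the same route as the paper, which simply invokes Proposition \ref{dirprop} with $Q=x^{d+1}$, $Q'=Q$ and $Q'=Q_1=x^d$, uses $\#H/\#H^{\pr}=q/(q-1)$, $\#H_{Q_1}/\#H^{\pr}=1/(q-1)$ and $H^2=H$, keeps the divisors $s=r$ and $s=r/2$, and absorbs the rest and the constant $-q^{-r/2}$ into $O(rq^{-r/6})$ using $\eta(\cdot,s)\le\pi(s)=O(q^s/s)$ and $1/2-1/p\ge 1/6$ for $p\ge 3$. (You also correctly read past the typographical error $-q^{r/2}$ in the paper's statement of Proposition \ref{dirprop}, which by Lemma \ref{1} should be $-q^{-r/2}$.)
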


\begin{lem}\label{hort}The group $H^\bot$ consists of residues of the form $$g_1(x^p)g_2(x^2)\bmod Q$$ where $g_i\in\fqx,(g_i,x)=1$. The same holds for
$H_{Q_1}^\bot$ and $Q_1$ respectively.\end{lem}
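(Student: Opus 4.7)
The plan is to translate $H^\bot$ into a concrete power-sum vanishing condition on representatives $h$, and then to construct the required factorisation using the logarithmic derivative in characteristic $p$.

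First I would reinterpret $H^\bot$ in terms of roots. For $h\in\fqx$ with $(h,x)=1$, write $h(x)=h(0)\prod_i(1-\al_ix)$ over an algebraic closure. Then \rf{dir} gives $\chi_f(h)=\psi\lb\tqp\sum_i f(\al_i)\rb$. Since $\psi$ is non-trivial and $\tqp$ is $\fp$-non-degenerate on $\fq$, the condition $\chi_f(h)=1$ for every odd $f\in\fqx$ of degree $\le d$ is, by specialising to $f(x)=ax^j$ for each odd $j\in[1,d]$ and each $a\in\fq$, equivalent to $s_j(h):=\sum_i\al_i^j=0$ for every odd $j\in[1,d]$.

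The forward inclusion then follows from two symmetry computations. In characteristic $p$ one has $g_1(x^p)=\tilde g_1(x)^p$ where $\tilde g_1$ is obtained from $g_1$ by taking $p$-th roots of the coefficients, so the roots of $g_1(x^p)$ occur with multiplicity $p$ and contribute a factor of $p=0$ to each $s_j$. Likewise the roots of $g_2(x^2)$ occur in pairs $\pm\gam_m^{1/2}$ and contribute $\sum_m\lb\gam_m^{j/2}+(-\gam_m^{1/2})^j\rb=0$ for every odd $j$. Hence any $h\equiv g_1(x^p)g_2(x^2)\pmod Q$ with $(g_i,x)=1$ lies in $H^\bot$; since every constant in $\fq^\times$ also lies in $H^\bot$ (as $\chi_f$ is trivial on constants), it suffices to match the subgroups with constant term $1$.

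For the reverse inclusion I use the logarithmic derivative $L(h):=h'(x)/h(x)$, a group homomorphism from the multiplicative group of residues modulo $x^{d+1}$ with constant term $1$ to the additive group $\fq[x]/x^d$, satisfying $L(h)=-\sum_{j\ge 1}s_j(h)x^{j-1}$. Since $(g_1(x^p))'=0$ in characteristic $p$, the kernel of $L$ consists exactly of residues $g_1(x^p)\bmod Q$; meanwhile $L(g_2(x^2))=2xg_2'(x^2)/g_2(x^2)$ contains only odd-degree monomials. The condition defining $H^\bot$ thus becomes: $L(h)\bmod x^d$ has only odd-degree monomials, i.e.\ $L(h)=x\phi(x^2)$ for a polynomial $\phi$ of degree $\le(d-3)/2$. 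If one can produce $g_2$ with $g_2(0)=1$ and $g_2'(y)/g_2(y)\equiv\phi(y)/2\pmod{y^{(d-1)/2}}$, then $L(h\cdot g_2(x^2)^{-1})=0$, so $h\equiv g_1(x^p)g_2(x^2)\pmod Q$ for a suitable $g_1$, completing the argument.

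The main obstacle is showing that $\phi/2$ lies in the image of the logarithmic derivative $L_2$ in the $y$-variable on units modulo $y^{(d+1)/2}$. The image of $L$ itself is cut out by the Frobenius constraint $s_{pj}=s_j^p$, which follows from $\sum\al_i^{pj}=(\sum\al_i^j)^p$ in characteristic $p$. Reading this constraint off $L(h)=x\phi(x^2)$ in the non-trivial case $j=2m$ gives $\phi_{pm-1}=\phi_{m-1}^p$, while the case $j$ odd is automatic as both coefficients already vanish. Because $2^p=2$ in $\fq$, this is exactly the Frobenius constraint characterising the image of $L_2$ applied to $\phi/2$. A short dimension count --- using $(d,p)=1$ to verify $\floor{\floor{d/p}/2}=\floor{(d-1)/(2p)}$ --- confirms that these constraints fully characterise the image of $L_2$, so $\phi/2\in\mathrm{im}(L_2)$, the desired $g_2$ exists, and the factorisation follows. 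The second assertion, for $H_{Q_1}^\bot$ and $Q_1=x^d$, is obtained by the identical argument applied modulo $x^d$ instead of $x^{d+1}$, with the power-sum conditions now ranging over odd $j\in[1,d-1]$.
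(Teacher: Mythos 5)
Your proof is correct, and the reverse inclusion is established by a genuinely different route from the paper's. The forward inclusion is essentially the same: you check that the power sums $s_j$ vanish for odd $j$ on residues of the form $g_1(x^p)g_2(x^2)$ (the paper's phrasing is that $\chi_f$ is an order-$p$ even character and $f$ is odd, but the mechanism is identical). For the reverse inclusion, the paper simply counts: it notes that the subgroup of residues $g_1(x^p)g_2(x^2)\bmod Q$ has the same order as $H^\bot$ and concludes equality, without any construction. You instead produce the factorisation explicitly. The logarithmic derivative $L(h)=h'/h$ encodes the power-sum condition as ``$L(h)$ has only odd-degree terms,'' the kernel of $L$ is precisely $\{g_1(x^p)\}$ because $jc_j=0$ in characteristic $p$ forces $c_j=0$ only when $p\nmid j$, and the Frobenius relation $s_{pj}=s_j^p$ transported through $L(h)=x\phi(x^2)$ becomes exactly the relation cutting out $\mathrm{im}(L_2)$ when applied to $\phi/2$ (the factor $2^p=2$ is the small miracle that makes this work). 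Your approach buys a constructive proof that also makes transparent the mechanism underlying Conjecture \ref{niceconj} two paragraphs later; the paper's approach is shorter but requires the reader to supply the order computations for both subgroups. One small point worth tightening: the phrase ``a short dimension count confirms that these constraints fully characterise the image of $L_2$'' should be unpacked --- you need both that $\mathrm{im}(L_2)$ is contained in the set cut out by $\psi_{pm-1}=\psi_{m-1}^p$ (which you get from the power-sum identity on $g_2$) and that the constrained set has size $q^{(d-1)/2-\floor{(d-1)/(2p)}}$, which matches $\#\mathrm{dom}(L_2)/\#\ker(L_2)$ because the constraint indices $pm-1$ are distinct and each constraint determines one new coordinate from lower-index ones. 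As written the reader has to reconstruct this, but the argument is sound.
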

\begin{proof} Take any $f\in\od$ and $g_1\in\fqx,(g_1,x)=1$. Since $\chi_f$ is an order $p$ character we have $\chi_f(g_2(x^p))=\chi_f(g_1(x)^p)=1$.
Now take $g_2\in\fqx,(g_2,x)=1$. We may assume $g_2=1+b_1x+...+b_kx^k$ since $\chi_f$ is even. The inverse roots of $g_2(x)$ come in pairs $\pm\al$
and since $f$ is odd by \ref{dir} we have $\chi_f(g_2(x^2))=1$. It remains to note that the group of residues of the form $g_1(x^p)g_2(x^2)\bmod Q$ 
has order $(q-1)q^{\floor{d/p}+(d+1)/2-\floor{d/2p}}$ and so does $H$. The same argument works for $H_{Q_1}$.\end{proof}

Now the problem of estimating $\aod{T_{f,\psi}^r}$ reduces to estimating the number of monic irreducible polynomials of degree $r$ (and $r/2$ if $r$ is even) which can be written in the form $h\equiv g_1(x^p)g_2(x^2)\pmod{x^d}$ for some $g_i\in\fqx,(g_i,x)=1$ (and the same for $x^{d+1}$).
Conjecture \ref{thm1o} follows from heuristics about the number of irreducible polynomials of
given degree falling in the subgroups $H^\bot,H_{Q_1}^\bot$ modulo $x^d,x^{d+1}$ respectively.

The following conjecture, if proven, would settle the case $r<d/4$:

\begin{conj}\label{niceconj} Let $q$ be a power of a prime $p>2$, $d$ a natural number. Let $h\in\fqx$ be prime to $x$, $\deg h=r$ and $r<d/4$. Assume that there exist $g_1,g_2\in\fqx$ s.t. $h\equiv g_1(x^p)g_2(x^2)\pmod{x^d}$. Then for all sufficiently large $d$ there in fact exist $g_3,g_4\in\fqx$ s.t.
$h=g_3(x^p)g_4(x^2)$. In particular if $h$ is irreducible then $h=g_4(x^2)$.\end{conj}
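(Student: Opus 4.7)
The plan is to rephrase the desired conclusion $h=g_3(x^p)g_4(x^2)$ as a condition on the rational function $U(x)=h(x)/h(-x)\in\fq(x)^\times$ and then extract this condition from the hypothesis using the fact that in characteristic $p$ the logarithmic derivative annihilates $p$-th powers. First I show the equivalence: $h=g_3(x^p)g_4(x^2)$ with $g_3,g_4\in\fqx$ if and only if $U$ is a $p$-th power in $\fq(x)^\times$. The forward direction is immediate because $g_3(x^p)=k(x)^p$ for $k=g_3^{1/p}\in\fqx$ (since $\fq$ is perfect) and $g_4(x^2)$ is invariant under $x\mapsto -x$, giving $U=(k(x)/k(-x))^p$. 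For the converse I use the divisor characterisation: $U$ is a $p$-th power in $\fq(x)^\times$ iff $v_\alpha(h)\equiv v_{-\alpha}(h)\pmod p$ for every $\alpha\in\overline{\fq}$. Since $(h,x)=1$ avoids the fixed point $\alpha=0$ of the involution $\alpha\mapsto-\alpha$, and this involution commutes with $\mathrm{Gal}(\overline{\fq}/\fq)$, one can Galois-equivariantly partition the roots of $h$ into paired $\{\alpha,-\alpha\}$ of equal multiplicity (contributing to $g_4(x^2)$) plus excess multiplicity divisible by $p$ (contributing to $g_3(x^p)=k(x)^p$), producing the required factorisation in $\fqx$.

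Next I extract this $p$-th-power condition from the hypothesis. Since $g_1(x^p)$ and $g_2(x^2)$ are units in $\fq[[x]]$ (their constant terms are nonzero because $h(0)\neq 0$), the congruence $h\equiv g_1(x^p)g_2(x^2)\pmod{x^d}$ yields the power series congruence $U\equiv g_1(x^p)/g_1(-x^p)\pmod{x^d}$. The right-hand side equals $(k(x)/k(-x))^p$ with $k=g_1^{1/p}\in\fqx$, so taking logarithmic derivatives and using $(V^p)'/V^p=pV'/V=0$ in characteristic $p$ gives
$$\frac{U'(x)}{U(x)}\equiv 0\pmod{x^{d-1}}\quad\text{in }\fq[[x]].$$
A direct calculation shows $U'/U=L(x)+L(-x)$ where $L=h'/h$, and the relation $U(-x)=U(x)^{-1}$ forces $U'/U$ to be an \emph{even} rational function. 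Hence $U'/U=\tilde N(x^2)/P(x^2)$, where $P(y)$ is determined by $P(x^2)=h(x)h(-x)$ (so $\deg P\le r$ and $P(0)=h(0)^2\neq 0$) and $\tilde N(y)$ by $\tilde N(x^2)=h'(x)h(-x)+h(x)h'(-x)$, which is even of degree $\le 2r-2$ in $x$, giving $\deg\tilde N\le r-1$. Invertibility of $P(x^2)$ in $\fq[[x]]$ turns the congruence into $\tilde N(x^2)\equiv 0\pmod{x^{d-1}}$; since $\tilde N(x^2)$ has degree $\le 2r-2$ in $x$ and $d-1>4r-1>2r-2$, we conclude $\tilde N=0$, so $U'/U=0$ identically.

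Finally, $U'=0$ in $\fq(x)$ places $U$ in $\ker(d/dx)=\fq(x^p)=\fq(x)^p$ (using perfectness of $\fq$), so $U$ is a genuine $p$-th power and the equivalence of the first step yields $h=g_3(x^p)g_4(x^2)$. For the ``in particular'' clause, if $h$ is irreducible of positive degree then $g_3$ must be a constant (otherwise $g_3(x^p)=k(x)^p$ would give each irreducible factor of $h$ a multiplicity of at least $p$, contradicting irreducibility), so $h=g_4(x^2)$ after absorbing constants. The step I expect to demand the most care is the Galois-equivariant partition of the roots in the first paragraph, but this is routine given that $\alpha\mapsto-\alpha$ commutes with the Galois action and $\alpha=0$ does not occur as a root. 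The essential insight is the conversion, via logarithmic differentiation in characteristic $p$, of the multiplicative congruence ``$U\equiv(p\text{-th power})\pmod{x^d}$'' into the \emph{linear} condition ``$U'/U\equiv 0\pmod{x^{d-1}}$'', which meshes perfectly with the degree bound on $h$.
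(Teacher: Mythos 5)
Your argument is correct, and it is in fact strictly stronger than what the paper achieves: the paper only proves Conjecture~\ref{niceconj} in the restricted range $r<Cp\log_q d$ (which is what is needed for Theorem~\ref{oddthm}), via a pigeonhole argument on the power-series coefficients of $g_1$ that produces an eventually-periodic power series $g_3$ and then identifies it as the expansion of a rational function. Your route is genuinely different and cleaner: you encode the conclusion as ``$U(x)=h(x)/h(-x)$ is a $p$-th power in $\fq(x)^\times$'' (using the valuation/class-group characterisation of $p$-th powers in $\fq(x)$ and the Galois-equivariance of the involution $\alpha\mapsto-\alpha$, which avoids $0$ because $(h,x)=1$), and then you extract this from the hypothesis by logarithmic differentiation: since $(W^p)'/W^p=0$ in characteristic $p$, the congruence $U\equiv g_1(x^p)/g_1(-x^p)\pmod{x^d}$ forces $U'/U\equiv 0\pmod{x^{d-1}}$, and the explicit rational expression $U'/U=\tilde N(x^2)/P(x^2)$ with $\deg\tilde N(x^2)\le 2r-2<d-1$ and $P(0)\neq 0$ then forces $\tilde N=0$, hence $U'=0$, hence $U\in\fq(x^p)=\fq(x)^p$ by perfectness of $\fq$. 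I verified the degree bound ($\deg\tilde N(x^2)\le 2r-2$ because the coefficient of $x^{2r-1}$ in $h'(x)h(-x)+h(x)h'(-x)$ cancels), the evenness of $U'/U$, and the converse direction of your equivalence in step one. In fact your proof needs only $r<d/2$ rather than $r<d/4$, and the clause ``for all sufficiently large $d$'' becomes superfluous. The essential advantage over the paper's approach is precisely the point you flag: logarithmic differentiation converts the multiplicative $p$-th-power congruence into a linear vanishing condition, which then meshes directly with the degree bound on $h$, whereas the paper's pigeonhole argument needs exponentially large $d$ relative to $r$. Since Theorem~\ref{oddthm} is stated only for $r<Cp\log_q d$ and $r<d/4$ because Conjecture~\ref{niceconj} was open in general, your argument would allow the hypothesis $r<Cp\log_q d$ in Theorem~\ref{oddthm} to be dropped.
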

We will give some evidence for Conjecture \ref{niceconj}, namely we will show that in holds for $r< C p\log_q d$ with any constant $0<C<1$.
First we show how Conjecture \ref{niceconj} implies Conjecture \ref{thm1o} for $r<d/4$. First assume that $r$ is odd.
Then by Conjecture \ref{niceconj} and Lemma \ref{hort} we have $\eta(H,r)=\eta(H_{Q_1},r)=0$ and so by Proposition \ref{oddprop} we
have $\aod{T_{f,\psi}^r}=O\lb rq^{-r/6}\rb$. Now assume that $r$ is even. Consider first $\eta(H,r)$. By Conjecture \ref{niceconj} 
and Lemma \ref{hort} any irreducible
polynomial of degree $r$ the residue of which modulo $Q$ is orthogonal to $H$ is of the form $g(x^2)$ with $g\in\fqx$ and $\deg g=r/2$.
The polynomial $g(x^2)$ is irreducible iff $g$ is irreducible and any root of $g$ in $\F_{q^{r/2}}$ is not a square in $\F_{q^{r/2}}$.
The number of such (monic) $g$ is easily seen to be $\frac{q^{r/2}}{2r}+O\lb q^{r/4}\rb$, so $\eta(H,r)=\frac{q^{r/2}}{2r}+O\lb q^{r/4}\rb$.
We also see from Conjecture \ref{niceconj} that $\eta(H_{Q_1},r)=\eta(H,r)$ and $\eta(H,r/2)=\eta(H_{Q_1},r/2)=O\lb q^{r/4}\rb$,
so by Proposition \ref{oddprop} we obtain $\aod{T_{f,\psi}^r}=-e_{2,r}+O\lb rq^{-r/6}\rb$.

We see that to establish Theorem \ref{oddthm} it suffices to prove Conjecture \ref{niceconj} in the case $r<C p\log_q d$.

\subsection{Proof of Conjecture \ref{niceconj} for $r<C p\log_q d$}

Let $q$ be a power of a prime $p>2$ and $r<d$ natural numbers. Let $h\in\fqx$ be a polynomial prime to $x$ with $\deg h=r$. Suppose
that $h$ can be written in the form $h\equiv g_1(x^p)g_2(x^2)\pmod {x^d}$. Since the polynomials of the form $g(x^p)$ modulo $x^d$ (prime to $x$)
form a group we may also write $g_1(x^p)h\equiv g_2(x^2)\pmod{x^d}$ (for a different choice of $g_1$).
Write $g_1=\sum_{i=0}^\ity a_ix^i$ (for sufficiently large $i$ we have $a_i=0$). We may assume $p\le r$, otherwise it is easy to see
that for $g_1(x^p)h$ to be of the form $g_2(x^2)$ modulo $x^d$ the polynomial $h$ itself must be even.
Now take any $C<1$ and assume that $r<Cp\log_q d$. For sufficiently
large $d$ we have $\floor{d/2p}>q^{r/p+1}+r/p+1$.
By the pigeonhole principle for some $j<k<d/p$ we have $a_{j+i}=a_{k+i}$ for $0\le i\le r/p$, with $j,k$ having the same parity.
Now denote by $g_3$ the infinite power series $$g_3(x)=\sum_{i=0}^{k-1}a_ix^i+\sum_{l=0}^\ity\sum_{i=0}^{k-j-1}a_ix^{l(k-j)+j+i}.$$
The first $k+\floor{2r/p}$ coefficients of $g_3$ coincide with those of $g_1$ and then the sequence of coefficients continues periodically with period $k-j$. 
The coefficients of $g_3(x^p)h$ coincide with those of $g_1(x^p)h$ up to the $pk$-th coefficient, after which they continue periodically with
period $(k-j)p$. This is because each coefficient depends on at most $r/p+1$ consecutive coefficients of $g_3$ (and the coefficients of $h$). 
When we multiply $h$ by $g_1(x^p)$ the first $d$ coefficients are zero for odd powers and the same holds for the first $pk$ coefficients
of $g_3(x^p)h$, after which it continues to hold by periodicity (since the period $(k-j)p$ is even). Thus the power series $g_3(x^p)h$ can be written
in the form $g_4(x^2)$ for some power series $g_4(x)$. But $g_3$ is periodic and so must be $g_4$. We then have two rational functions 
$h_1,h_2\in\F_q(x)$ the $x$-adic expansions of which are $g_3,g_4$ respectively and we must have $h=h_1(x)^p/h_2(x^2)$.
Now by the unique factorisation property in $\fqx$ we see that $h$ can be written in this form $h=h_3(x)^ph_4(x^2)$ with $h_i\in\fq[x]$.

\section{The distribution of the number of points on curves in the A-S family}\label{numpoints}

In this section we consider the distribution of the number of points on the curve $C_f$ as $f$ varies uniformly through the family
$\gd$ of all degree $d$ monic polynomials in $\fq[x]$ and
$d\to\infty$ and prove Theorems \ref{t1},\ref{t2},\ref{t3},\ref{t4}.
Throughout this section $r$ is a fixed natural number.

\subsection{Preliminaries}

The number of $\fqr$-rational points on $C(f)$ equals the number of solutions to $F(x,y)=0$ over $\F_{q^r}$ plus one. From the Hilbert 90 theorem or elementary
linear algebra it follows that for $a\in\fqr$ the equation $y^p-y=a$ is solvable in $\fqr$ iff $\tr a=0$, in which case it has exactly $p$ solutions, 
here by $\tr$ we denote the trace map from $\fqr$
to $\F_p$. Thus for a given $x\in\fqr$ the equation $F(x,y)=0$ is solvable iff $\tr f(x)=0$ and in this case it has exactly $p$ solutions
(see section \ref{geom}). We denote by 
$N_r(f)$ the number of solutions to $\tr f(x)=0$ in $\fqr$. It is enough to study the distribution of $N_r(f)$ (the number of points on
the curve is then $pN_r(f)+1$).

From now on we fix $r$. Let $h\in\F_q[x]$ be an irreducible polynomial or degree $e|r$. Its splitting field is the subfield $\F_{q^e}\ss\fqr$ which is isomorphic to $\F_q[x]/h$.
If $a\in\fqr$ is a root of $h$ and $f\in\F_q[x]$ then we denote $\tr_hf=\tr f(a)$ (it does not depend on the choice of the root $a$).
The value of $\tr_hf$ only depends on the residue $f\bmod h$. Denote $$\xi_h(f)=\choice{1,}{\tr_hf=0}{0,}{\tr_hf\neq 0.}$$
Thus we have
\beq\label{2}N_r(f)=\sum_{e|d}e\sum_{\deg h=e}\xi_h(f),\eeq where the inner sum is over monic irreducible $h$ (henceforth $h$ will always denote an irreducible
polynomial in $\F_q[x]$ and summation over $h$ will be always understood in this sense).

If $p$ divides $r/e$ then $\tr_hf=0$ for all $f\in\F_q[x]$. Otherwise exactly $1/p$ of the residues modulo $h$ satisfy $\tr_hf=0$, because
$\tr:\fq[x]/h\to\fp$ is a nonzero $\fp$-linear map.

The following lemmata will be used for the proof of the theorems.

\begin{lem}\label{l1}Let $h_1,...,h_k\in\F_q[x]$ be distinct monic irreducible polynomials, $u=\sum_{i=1}^k\deg h_i$. 
Suppose $d\ge u$. Let $f\in\gd$ be chosen uniformly at random.
Then the values $f\bmod h_1,...,f\bmod h_k$ are distributed uniformly in $\oplus_{i=1}^k\F_q[x]/h_i$.\end{lem}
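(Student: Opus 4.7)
The plan is to reduce everything to a single modulus via the Chinese Remainder Theorem and then perform a direct count. Since $h_1, \ldots, h_k$ are distinct monic irreducibles, they are pairwise coprime, so writing $H = h_1 h_2 \cdots h_k$ we have the CRT isomorphism
\[
\F_q[x]/H \;\cong\; \bigoplus_{i=1}^k \F_q[x]/h_i,
\]
given by $f \bmod H \mapsto (f \bmod h_1, \ldots, f \bmod h_k)$. Under this isomorphism, the joint law of $(f \bmod h_i)_i$ is the pushforward of the law of $f \bmod H$. So it is enough to show that $f \bmod H$ is uniformly distributed in $\F_q[x]/H$ when $f$ is drawn uniformly from $\gd$.

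Next I would fix an arbitrary residue class and count its preimage. Let $R \in \F_q[x]/H$ be represented by the unique polynomial $r \in \F_q[x]$ of degree less than $u = \deg H$. A polynomial $f \in \gd$ satisfies $f \equiv r \pmod H$ iff $f = r + g H$ for some $g \in \fq[x]$, and the additional requirement that $f$ be monic of degree exactly $d$ translates (since $\deg r < u \le d$) to the condition that $g$ be monic of degree exactly $d - u$. The number of such $g$ is $q^{d-u}$, since the leading coefficient is forced to be $1$ and the remaining $d - u$ coefficients of $g$ are free. Crucially, this count does not depend on the choice of $R$.

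Therefore every fiber of the reduction map $\gd \to \F_q[x]/H$ has size $q^{d-u}$, and since $\#\gd = q^d = q^u \cdot q^{d-u} = \#(\F_q[x]/H) \cdot q^{d-u}$, the map is uniform-to-uniform. Composing with the CRT isomorphism yields the claim. There is no real obstacle here; the only point to watch is that the hypothesis $d \ge u$ is essential for the fiber count, since otherwise one could not force $f$ to be monic of degree $d$ while prescribing its residue modulo $H$. The argument is self-contained and does not require any facts about the characteristic $p$ or about Artin--Schreier curves.
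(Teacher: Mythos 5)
Your proof is correct and takes essentially the same approach as the paper: reduce via CRT to a single modulus $H=\prod h_i$ of degree $u$, then observe that the map $\gd\to\F_q[x]/H$ has all fibers of the same size $q^{d-u}$ because $\gd$ partitions into complete residue systems modulo $H$. You simply spell out the fiber count (writing $f=r+gH$ with $g$ monic of degree $d-u$) that the paper states more tersely.
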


\begin{proof} The values $f\bmod h_i$ depend only on the residue of $f$ modulo $\prod_{i=1}^kh_i$. If $d\ge u$ then $\gd$ can
be divided into $q^{d-u}$ complete systems of residues modulo any polynomial of degree $d$.\end{proof}

\begin{lem}\label{l2}Suppose that for each natural $m$ we have two sequences of random variables $X_1^{(m)},...,X_{l(m)}^{(m)}$
and $Y_1^{(m)},...,Y_{l(m)}^{(m)}$ satisfying the following conditions:
\ben
\item $X_i^{(m)}$ and $Y_i^{(m)}$ have the same distribution for each $m,i$.
\item $Y_1^{(m)},...,Y_{l(m)}^{(m)}$ are independent for each $m$. 
\item For any fixed $k$ there exists $C_k$ s.t. for $m>C_k$ the variables $X_{i_1}^{(m)},...,X_{i_k}^{(m)}$ are
independent for any $i_1,...,i_k$. \een

Denote $S_m=\sum_{i=1}^{l(m)}X_i\m, T_m=\sum_{i=1}^{l(m)}Y_i\m$. Assume that there are sequences of real numbers $A_m,B_m$ s.t.
the distribution of $A_mT_m+B_m$ weakly converges to a distribution $D$ as $m\to\infty$. Assume further that $D$ is uniquely
determined by its moments. Then $A_mS_m+B_m$ converges in distribution to $D$.\end{lem}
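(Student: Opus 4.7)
The plan is to apply the method of moments. The key observation, which I will establish first, is that under conditions 1--3 the moments $E[S_m^k]$ and $E[T_m^k]$ coincide for every fixed $k$ as soon as $m$ is large enough, so the lemma is really a device that lets us compute moments in the easier ``fully independent'' model $T_m$ while drawing conclusions about $S_m$.

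To see this I expand
$$E[S_m^k]=\sum_{1\le i_1,\ldots,i_k\le l(m)}E\lb X_{i_1}\m\cdots X_{i_k}\m\rb.$$
In each term let $i'_1,\ldots,i'_s$ be the distinct indices appearing, with multiplicities $t_1,\ldots,t_s$ (so $s\le k$). Condition 3 applied with parameter $k$ says that for $m>C_k$ any $k$ distinct variables $X_{j_1}\m,\ldots,X_{j_k}\m$ are jointly independent, and hence so is any subset of size $s\le k$ (once $l(m)\ge k$, which holds for $m$ large). Thus the expectation factors as $\prod_{a=1}^s E[(X_{i'_a}\m)^{t_a}]$. Condition 1 lets me replace each factor by $E[(Y_{i'_a}\m)^{t_a}]$, and condition 2 then collapses this product back to $E[Y_{i_1}\m\cdots Y_{i_k}\m]$. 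Summing over the multi-indices yields $E[S_m^k]=E[T_m^k]$ for $m$ past a threshold depending only on $k$.

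Combining this with the binomial expansion
$$(A_mS_m+B_m)^j=\sum_{i=0}^j\binom{j}{i}A_m^iB_m^{j-i}S_m^i,$$
and the analogous identity for $T_m$, I get $E[(A_mS_m+B_m)^j]=E[(A_mT_m+B_m)^j]$ for every fixed $j$ once $m$ is sufficiently large. In the applications of this lemma (Theorems \ref{t2}--\ref{t4}) the convergence $A_mT_m+B_m\xrightarrow{d}D$ is itself established by a direct moment computation for sums of independent bounded Bernoulli variables (via a CLT or Poisson-limit argument), so in fact $E[(A_mT_m+B_m)^j]\to m_j(D)$ for all $j$. The moment equality above transfers this moment convergence to $A_mS_m+B_m$, and since $D$ is determined by its moments, the method of moments gives $A_mS_m+B_m\xrightarrow{d}D$.

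The one subtle point is the last step: weak convergence of $A_mT_m+B_m$ alone does not force moment convergence, so the hypothesis that $D$ is moment-determinate must be paired with the fact that in the applications moment convergence for the independent sums $T_m$ is what is actually verified. Beyond this bookkeeping there is no genuine obstacle; the combinatorial identification $E[S_m^k]=E[T_m^k]$ is the whole content of the lemma.
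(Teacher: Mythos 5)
Your proof is correct and takes essentially the same approach as the paper — expanding the $k$-th mixed moment, factoring via the eventual $k$-wise independence of the $X_i\m$ and the full independence of the $Y_i\m$ to obtain $E[S_m^k]=E[T_m^k]$ for $m$ large, and then invoking the method of moments. The subtlety you flag at the end (weak convergence of $A_mT_m+B_m$ does not by itself force moment convergence to the moments of $D$) is genuine and is also passed over silently in the paper's two-line proof; you correctly observe that it is harmless here because in the applications (Theorems \ref{t2}--\ref{t4}) the limit for the independent sums $T_m$ is obtained by moment computations, so moment convergence is available.
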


\begin{proof} It is enough to show that for each $k$ the $k$-th moment of $A_mS_m+B_m$ equals the $k$-th moment of $A_mT_m+B_m$ for $m>C_k$.
This follows immediately from the assumed properties, the definition of the $k$-th moment and the multiplicativity of expectation on independent
variables.\end{proof}

\subsection{Proof of the results}

For monic irreducible $h\in\F_q[x]$ and $f\in\F_q[x]$ denote $$N(f)=\sum_{e|r}e\sum_{\deg h=e}\xi_h(f).$$
Lemma \ref{l1} shows that for any distinct $h_1,...,h_k$ the variables $\xi_{h_i}(f),...,\xi_{h_k}(f)$ ($f$ chosen uniformly from $\gd$) are independent for $d\ge\sum\deg h_k$.
We have $\xi_h(f)\sim B(1/p)$ if $(r/\deg h,p)=1$ and $\xi_h(f)\equiv 1$ otherwise (recall that by $B(t)$ we denote the Bernoulli random variable
taking the value 1 with probability $t$ and 0 with probability $1-t$).
If $p|r$ then $\xi_h\equiv 1$ iff $\deg h|(r/p)$ and we have $$\sum_{\deg h|(r/p)}(\deg h)\xi_h(f)=q^{r/p}$$ for any $f$.
Now taking all the monic irreducible $h$ s.t. $\deg h|r$ and noting that the sum of their degrees is $\sum_{e|r}e\nu(q,e)=q^r$ we obtain Theorem 1. 

Now denote by $h_{e,1},...,h_{e,\nu(q,e)}\in\F_q[x]$ the sequence of all monic irreducible polynomials of degree $e$. 
Given a sequence $q(m)=p(m)^{n(m)}$ denote by $$Y_{e,i}, 1\le i\le l(m)=\nu(q(m),e)$$ a set of independent random variables with
$Y_{e,i}\m\sim eB(1/p)$ if $(r/e,p)=1$ and $Y_{e,i}\m\equiv e$ if $p|r$. Taking $X_{e,i}\m=e\xi_{h_{e,i}}(f)$,
we see from Lemma \ref{l1} that the conditions of Lemma \ref{l2} are satisfied for $X_{e,i}\m,Y_{e,i}\m$. 
Thus to establish theorems \ref{t2},\ref{t3},\ref{t4} we only need to show that
$$\sum_{e|r}\sum_{i=1}^{\nu(q(m),e)} Y_{e,i}\m$$ converges in distribution to the limit stated in the theorems. For the rest of this section we omit $m$ from the
notation (it is implicit).

In the setting of Theorem \ref{t2} we get (for $p>r$) a sum of $p$ independent random variables distributed as $B(1/p)$, which converges to the Poissonian distribution
with mean 1. In the setting of Theorem \ref{t3} it is enough to consider the variables corresponding to the irreducible polynomials of degree $r$ and $r/2$ (if the latter is integral), because the number of polynomials of degree $e|r$ and $e<r/2$ is $O\lb q^{r/3}\rb$.
If $r$ is odd then there are $\nu(q,r)=q^{r}/r+O\lb q^{r/3}\rb$ variables $Y_{r,i}$ distributed like $rB(1/p)$, with mean $r/p$ and variance $r^2p(1-1/p)$.
The conclusion now follows from the central limit theorem. 

If $r$ is even and $p>2$ then there are $$\nu(q,r)=\lb q^r-q^{r/2}\rb/r+O\lb q^{r/3}\rb$$ variables $Y_{r,i}$ 
with mean $r/p$ and variance $r^2p^{-1}(1-1/p)$ and $\nu(q,r/2)=q^{r/2}/r+O\lb q^{1/4}\rb$ variables $Y_{r/2,i}$ with mean $r/2p$ and variance $r^2p^{-1}(1-1/p)/4$, which together gives the
same result as for odd $r$. For $p=2$ and $r$ even there are $\lb q^r-q^{r/2}\rb/r+O\lb q^{r/3}\rb$ variables $Y_{r,i}$ distributed as $rB(1/p)$
and $q^{r/2}/r+O\lb q^{1/4}\rb$ variables $Y_{r/2,i}\equiv 1$ and again the conclusion follows from the central limit theorem.

In the setting of Theorem \ref{t4} we can see as above that for odd $r$ the limit distribution of our sum is the same as for $\sum_{i=1}^{\nu(q,r)}Y_{r,i}$.
This is a sum of $\nu(q,r)=q^{r}/r+O\lb q^{r/3}\rb$ independent random variables with distribution $rB(1/p)$. We can group each $p$ such variables
into a single variable with some distribution $S(p)$. The distribution $S(p)$ has mean $r$, variance $r(1-1/p)$ and a bound on the third moment independent
of $p$ (since the variable itself is bounded). 
We have a sum of $q^{r}/(rp)\to\infty$ variables with distribution $S(p)$, so invoking the effective version of the central limit theorem (see 
\cite[\S XVI.5]{feller}) we obtain
the conclusion of Theorem \ref{t4}.

{\bf Acknowledgments.} The author would like to thank Ze\'{e}v Rudnick for suggesting the problems studied in the present work and many helpful
discussions and suggestions in the course of research and writing the present paper. The present work is part of the author's M. Sc. thesis written under
the supervision of Ze\'{e}v Rudnick at Tel-Aviv University.

The author would also like to thank the authors of \cite{uniform} for pointing out a small error in an earlier version of this paper.
Finally the author would like to thank the anonymous referee of this paper for their thorough review and many helpful suggestions for improving
the exposition.

\end{document}